\tikzset{
  symbol/.style={
    draw=none,
    every to/.append style={
      edge node={node [sloped, allow upside down, auto=false]{$#1$}}}
  }
}
\DeclareMathAlphabet{\mathbbmsl}{U}{bbm}{m}{sl}
\title{Extension of the Topological Abel-Jacobi Map for Cubic Threefolds}
\newcommand{\C}{\mathbb{C}} %newly added on Aug 30, 2021 
\newcommand{\Z}{\mathbb{Z}}
\newcommand{\Hv}{H_{\textup{van}}}
\newcommand{\prim}{\textup{prim}}
\newcommand{\van}{\textup{van}}
\newcommand{\Bl}{\textup{Bl}}
\newcommand{\Aut}{\textup{Aut}}
\newcommand{\Sym}{\textup{Sym}}
\def\acts{\curvearrowright} %Mar 29, 2022
\newtheorem{theorem}{Theorem}[section]
\newtheorem{definition}[theorem]{Definition}
\newtheorem{proposition}[theorem]{Proposition}
\newtheorem{corollary}[theorem]{Corollary}
\newtheorem{question}[theorem]{Question}
\newtheorem{example}[theorem]{Example}
\newtheorem{lemma}[theorem]{Lemma}
\newtheorem{conjecture}[theorem]{Conjecture}
\newtheorem{remark}[theorem]{Remark}
\newtheorem{thm}{Theorem}
\author{Yilong Zhang}
\address{Department of Mathematics\\
Purdue University\\
  150 N University St, West Lafayette, IN 47907}
\email{zhan4740@purdue.edu}
\date{Jan 3, 2024}
\subjclass[2020]{14J30, 32J05 primary, 32S55, 14C05 secondary}
\begin{document}
\maketitle
\begin{abstract}

    The difference $[L_1]-[L_2]$ of a pair of skew lines on a cubic threefold defines a vanishing cycle on the cubic surface as the hyperplane section spanned by the two lines. By deforming the hyperplane, the flat translation of such vanishing cycle forms a 72-to-1 covering space $\mathcal{T}_v\to U$ of a Zariski open subspace of $(\mathbb P^4)^*$. Based on a lemma of Stein on the compactification of finite analytic covers, we found a compactification of $\mathcal{T}_v$ to which the topological Abel-Jacobi map $\mathcal{T}_v\to J(X)$ extends. Moreover, the boundary points of the compactification can be interpreted in terms of local monodromy and the singularities on cubic surfaces. We prove the associated map on fundamental groups of topological Abel-Jacobi map is surjective.
\end{abstract}
\tableofcontents

\section{Introduction}
Let $X\subseteq \mathbb P^N$ be a smooth projective variety of dimension $2n-1$ over $\C$ and $Y$ be a smooth hyperplane section. The vanishing cohomology of $Y$ is the kernel of the Gysin morphism 
\begin{equation}\label{eqn_Intro_Hv}
    H^{2n-2}_{\van}(Y,\mathbb Z)=\ker(H^{2n-2}(Y,\mathbb Z)\to H^{2n}(X,\mathbb Z)).
\end{equation}

When $Y$ varies in the universal family of smooth hyperplane sections, the vanishing cohomology forms a $\Z$-local system $\mathcal{H}^{2n-2}_{\van}$ over an open subspace $U$ of $(\mathbb P^N)^*$. The \'etale space of $\mathcal{H}^{2n-2}_{\van}$ is naturally an analytic covering space $\mathcal{T}\to U$. There is a closed subspace $\textup{Hdg}(\mathcal{T})\subseteq \mathcal{T}$, called the \textit{locus of Hodge classes}, parameterizing Hodge classes of the middle dimension on $Y$. According to Cattani, Deligne, and Kaplan \cite{CDK}, each connected component of $\textup{Hdg}(\mathcal{T})$ is algebraic.

Schnell defined an analytic compactification of $\textup{Hdg}(\mathcal{T})$ by first obtaining a normal analytic compactification $\bar{\mathcal{T}}$ of $\mathcal{T}$ using Hodge module theory and then taking analytic closure of $\textup{Hdg}(\mathcal{T})$ (cf. \cite{SchExtLocus} and Appendix \ref{App_Schnell}). The resulting analytic space $\overline{\textup{Hdg}(\mathcal{T})}$ is, in fact, algebraic, and captures the meaning of Hodge classes "in the limit" on the boundary points. The initial motivation of this research is to understand these boundary points.

\begin{question}\label{Question_1}
What is the geometric meaning of the boundary points of $\overline{\textup{Hdg}(\mathcal{T})}$? 
\end{question}

On the other hand, there is a distinguished component $\mathcal{T}_v$ of $\mathcal{T}$ containing a \textit{primitive vanishing cycles}, i.e., a class which is monodromy conjugate to the class of a vanishing sphere of nodal degeneration (cf. Proposition \ref{Prop_Tv}). According to a conjecture of Herb Clemens (cf. Conjecture \ref{Conj_Clemens_Tube}), the component $\mathcal{T}_v$ captures enough topological information of $X$. So we're particularly interested in understanding the component $\bar{\mathcal{T}}_v$ which compactifies $\mathcal{T}_v$. %the component $\bar{\mathcal{T}}_v$ in Schnell's analytic compactification $\bar{T}$ that contains $\mathcal{T}_v$ as open dense subspace. 

There is a real analytic map called the \textit{topological Abel-Jacobi map} (cf. \cite{YZ_TAJ} and \cite{Zhao})
\begin{equation}\label{Intro_eqn_TAJ}
  \Psi_{\textup{top}}: \mathcal{T}\to J_{\prim}(X), 
\end{equation}
generalizing Griffiths' Abel-Jacobi map and its restriction to $\text{Hdg}(T)$ is holomorphic. Here $J_{\prim}(X)$ is the primitive intermediate Jacobian of $X$ in the middle dimension. We ask

\begin{question}\label{Question_TAJext}
    Does the topological Abel-Jacobi map \eqref{Intro_eqn_TAJ} extend to Schnell's compactification of $\mathcal{T}_v$?
\end{question}
Note Schnell's compactification of $\mathcal{T}$ depends on the choice of completion of the base, so to answer Question \ref{Question_TAJext}, we should also specify compactification of $U$. Of course, we have a canonical choice: the projective space $(\mathbb P^N)^*$. To approach these questions, we start from $\dim (X)=3$, and when $X$ is a hypersurface in $\mathbb P^4$. The minimal degree for this question to be interesting is $3$: a cubic threefold. 
\subsection{Cubic Threefolds}
A smooth hyperplane section of a cubic threefold is a cubic surface denoted as $S$. A primitive vanishing cycle is class $\alpha\in H^2(S,\Z)$ such that
\begin{equation}\label{rootsystem_eqn}
    \alpha\cdot \alpha=-2, \alpha\cdot h=0,
\end{equation}
where $h$ is the hyperplane class. There are 72 such classes and correspond to the root system of Lie algebra $\mathbb E_6$. By varying the hyperplanes, the root system varies and form a 72-to-1 connected cover
\begin{equation}\label{Intro_eqn_pi_v}
    \pi_v:\mathcal{T}_v\to U,
\end{equation}
whose monodromy group is the Weyl group of $\mathbb E_6$.

Since any primitive vanishing cycle can be written as the difference $[L_1]-[L_2]$ of two skew lines, $\mathcal{T}_v$ already parameterizes (algebraic) Hodge classes, so Question \ref{Question_1} reduces to understand boundary points of compactification of $\mathcal{T}_v$. So the two questions can be combined into the following.

\begin{question}\label{Question_main}
    When $X$ is a cubic threefold, is there a geometric compactification of $\mathcal{T}_v$, with an understanding of boundary points, such that the topological Abel-Jacobi map \eqref{Intro_eqn_TAJ} extend to the compactification of $\mathcal{T}_v$?
\end{question}

Our first goal is to understand Schnell's compactification $\bar{\mathcal{T}}_v$. Since the monodromy of \eqref{Intro_eqn_pi_v} is finite, Schnell's compactification is a normal algebraic variety and coincides with an analytic compactification of Stein (cf. Lemma \ref{analytic_cover_closure}).

\subsection{Abel-Jacobi Map and Compactification of $\mathcal{T}_v$}
The intermediate Jacobian $J(X)$ of the cubic threefold $X$ is a principally polarized abelian variety of dimension five. The set of lines of $X$ is parameterized by a surface $F$ of general type.  The Abel-Jacobi map defined by Clemens and Griffiths \cite{CG} is a morphism 
\begin{equation}\label{Intro_eqn_AJ}
    \Psi:F\times F\to J(X),%\ (L_1,L_2)\mapsto \int_{L_2}^{L_1}
\end{equation}
%which integrate the 3-forms against a 3-chain $\Gamma$ such that $\partial \Gamma=L_1-L_2$. 

The image of \eqref{Intro_eqn_AJ} is the theta divisor $\Theta$. According to Beauville \cite{Beauville}, $\Theta$ has a unique triple point singularity $0$, and the blow-up $\Bl_0(\Theta)$ is smooth with the exceptional divisor isomorphic to $X$.

Two general lines $L_1, L_2$ on the cubic threefold $X$ are skew.  They span a hyperplane $H=\textup{Span}(L_1,L_2)$ and cuts out a smooth cubic surface $X_{H}$. Moreover, the class $[L_1]-[L_2]$ is a primitive vanishing cycle on $X_H$. Conversely, any primitive vanishing cycle arises from the difference of two skew lines exactly 6 times. By varying this construction in the family, we get a 6-to-1 lifting of the 72-to-1 cover \eqref{Intro_eqn_pi_v}

\begin{figure}[ht]
    \centering
\begin{tikzcd}
(F\times F)^{\circ}\arrow[r] \arrow[dr] & \mathcal{T}_v \arrow[d] \\
& U.
\end{tikzcd}
\end{figure}{}

$(F\times F)^{\circ}$ can be compactified in a double cover of the Hilbert scheme of a pair of skew lines of the cubic threefold $X$, and according to \cite{YZ_SkewLines}, such a double cover is isomorphic to the blow-up $\Bl_{\Delta_F}(F\times F)$ and dominates $\bar{\mathcal{T}}_v$. Using the structure of the Hilbert scheme and the result of Beauville on the extension of the Abel-Jacobi map to the blow-ups, we obtain

%As a result of Beauville, $\Psi$ extends to the blow-up on the source and target
%\begin{equation} 
%    \tilde{\Psi}:\Bl_{\Delta_F}(F\times F)\to \Bl_0(\Theta),
%\end{equation}
%which gives compactifications of \eqref{Intro_eqn_TAJ_cubic3fold}. According to \cite{YZ_SkewLines}, the blow-up $\Bl_{\Delta_F}(F\times F)$ has an interpretation of the Hilbert scheme of $X$ of a pair of skew lines with an order. 

%Moreover, follows from that any pair of skew lines is contained in a hyperplane, there is natural map $\Bl_{\Delta_F}(F\times F)\to (\mathbb P^4)^*$ and factors through both $\tilde{\Psi}$ and $\bar{\mathcal{T}}_v$. This allows us to describe $\bar{\mathcal{T}}_v$ as Stein factorization of the extended Gauss map $\Bl_0(\Theta)\to (\mathbb P^4)^*$.

\begin{proposition} (cf. Theorem \ref{barT'BlowupThm})\label{IntroVCThm}
There is birational morphism $\textup{Bl}_0(\Theta)\to \bar{\mathcal{T}}_v$, which contracts finitely many elliptic curves corresponding to the Eckardt points on the cubic threefold $X$. 
\end{proposition}

Here an Eckardt point is a point $p\in X$ through which infinitely many lines on $X$ pass. The hyperplane section tangent to an Eckardt point $T_pX$ is a cone over an elliptic curve — which has an elliptic singularity at the cone point. For general cubic threefold $X$, there is no Eckardt point, so $\bar{\mathcal{T}}_v\cong \Bl_0(\Theta)$. 

Note that a primitive vanishing cycle on a cubic surface can be also written as $[C]-h$ where $C$ is a twisted cubic, and $h$ is a hyperplane class. So $\mathcal{T}_v$ (and therefore $\Theta$) is also dominated by an open subspace of the Hilbert scheme of twisted cubics. Therefore, $\Theta$ is parameterized by certain Gieseker stable moduli space of coherent sheaves. This is first considered By Beauville \cite{Bea00} and sharpened by Bayer et al. \cite{BBF+}. We discussed the relation to our work in \cite[Section 6]{YZ_SkewLines}.

\subsection{Limiting Primitive Vanishing Cycles and Resolution of ADE Singularities}

A fiber of $\mathcal{T}_v\to U$ corresponds to the $72$ primitive vanishing cycles (or roots) on a smooth cubic surface. The boundary points of $\bar{\mathcal{T}}_v$ capture the notion of primitive vanishing cycles "in the limit" as a general hyperplane section specialize to a singular hyperplane section. So we have the following definition.

\begin{definition}\label{Intro_Def_LPVS}\normalfont
Let $t_0\in (\mathbb P^4)^*\setminus U$. Call the set theoretic fiber $PV_{t_0}$ of $\bar{\mathcal{T}}_v\to (\mathbb P^4)^*$ at $t_0$ to be the set of \textit{limiting primitive vanishing cycles} on the cubic surface $X_{t_0}$.
\end{definition}

Assume $X_{t_0}$ has only ADE singularities, the universal hyperplane sections of $X$ around a hyperplane section $X_{t_0}$ captures "maximal" topological information in the sense that the local monodromy group of vanishing cohomology on the smooth fiber near $X_{t_0}$ is the same as the monodromy group in the semi-universal deformation of the ADE singularities on $X_{t_0}$ (cf. Proposition \ref{MonoActGloProp}).

Suppose $X_{0}=X_{t_0}$ has at worst ADE singularities, then the minimal resolution $\tilde{X}_0\to X_0$
has exceptional divisors union of a bunch of $(-2)$ curves determined by the Dynkin diagram of the corresponding ADE type of the singularities. On the other hand, these effective $(-2)$ curves generate a subgroup $W_e$ of the Weyl group $W(\mathbb E_6)$. Then we have the following interpretation of the limiting primitive vanishing cycles.

\begin{proposition} (cf. Theorem \ref{MonOrbThm}) \label{Intro_MonOrbThm}
 $PV_{t_0}$ is identified with the orbit of the group action $W_e\acts R(\mathbb E_6)$.
\end{proposition}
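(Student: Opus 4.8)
The plan is to reduce the set-theoretic fiber $PV_{t_0}$ to a count of orbits of a purely local monodromy action, and then transport that count to $R(\mathbb E_6)$ using the classical monodromy of simple singularities.

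First I would extract the local structure of the fiber from Stein's lemma (Lemma \ref{analytic_cover_closure}). Choose a small contractible Stein neighborhood $\Delta$ of $t_0$ in $(\mathbb P^4)^*$ and set $\Delta^{\circ}=\Delta\cap U$, the complement in $\Delta$ of the discriminant locus. Since $\pi_v$ is a finite covering (Proposition \ref{Prop_72to1}), the restriction $\pi_v^{-1}(\Delta^{\circ})\to\Delta^{\circ}$ is classified by the local monodromy representation $\rho\colon \pi_1(\Delta^{\circ},\ast)\to \Sym(R(\mathbb E_6))$, and its connected components are in bijection with the orbits of the image $G_{t_0}:=\rho(\pi_1(\Delta^{\circ}))$ acting on the generic fiber $R(\mathbb E_6)$. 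By the construction in Lemma \ref{analytic_cover_closure}, $\bar{T}_v$ over $\Delta$ is obtained by normalizing $\Delta$ in each connected component of this cover; each such component, being a connected finite cover of the connected space $\Delta^{\circ}$, normalizes to a space that is unibranch over $t_0$ and hence contributes a single point. Thus $PV_{t_0}=(\bar{\pi}_v)^{-1}(t_0)$ is canonically identified with the set of $G_{t_0}$-orbits on $R(\mathbb E_6)$.

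It then remains to identify the local monodromy group $G_{t_0}$ with $W_e$ as subgroups of $W(\mathbb E_6)$, equivariantly for the action on $R(\mathbb E_6)$. Here I would invoke Proposition \ref{MonoActGloProp}: the local monodromy group of the vanishing cohomology of the family near $X_{t_0}$ coincides with the monodromy group of the semi-universal deformation of the ADE singularities carried by $X_{t_0}$. By the classical theory of simple singularities (Brieskorn's simultaneous resolution together with the Picard--Lefschetz formulas of Arnold et al.), this deformation monodromy group is generated by the reflections in the vanishing cycles of the Milnor fibers, one Dynkin diagram per singular point. Under the minimal resolution $\tilde{X}_0\to X_0$, following the specialization of a vanishing cycle to an effective $(-2)$-curve as in Example \ref{Exa_CubicNode}, these vanishing cycles are identified---compatibly with the intersection form $\alpha\cdot\alpha=-2$---with the exceptional $(-2)$-curves. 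Hence the reflections generating $G_{t_0}$ are exactly the reflections in the $(-2)$-curve classes, and the group they generate is by definition $W_e$. Combined with the first step, this yields $PV_{t_0}\cong\{W_e\text{-orbits on }R(\mathbb E_6)\}$.

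The main obstacle lies in making the two identifications of the second step genuinely compatible, i.e. equivariant. One must fix a marking of the generic fiber $H^2_{\van}(X_t,\mathbb Z)\cong\langle R(\mathbb E_6)\rangle$ under which (i) the geometric local monodromy transformations act by the Picard--Lefschetz reflections associated with the degenerating cycles, and (ii) those cycles, carried to the special fiber and resolved, land on the $(-2)$-curve classes with the correct intersection numbers and Dynkin incidences. Proposition \ref{MonoActGloProp} is precisely what guarantees that no monodromy is lost or gained in passing between the global family over $\Delta^{\circ}$ and the semi-universal deformation---so that $G_{t_0}$ is neither a proper subgroup nor an overgroup of $W_e$---and this maximality is the crux on which the orbit count depends. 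A secondary point to verify carefully is the unibranch claim of the first step, namely that each connected component of $\pi_v^{-1}(\Delta^{\circ})$ fills in to a single boundary point; this rests on connectedness of $\Delta^{\circ}$ together with the finiteness built into Stein's construction.
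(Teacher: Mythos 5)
Your proposal is correct and follows essentially the same route as the paper: the paper proves this statement by combining Lemma \ref{Lemma_Cubic3fold_LocMonodromyOrbit} (the fiber of $\bar{\pi}_v$ over $t_0$ equals the set of orbits of the local monodromy group $G_{t_0}$ on $R(\mathbb E_6)$, which follows from normality in Stein's Lemma \ref{analytic_cover_closure} exactly as in your covering-space/unibranch argument) with Proposition \ref{MonoActGloProp} ($G_{t_0}\cong W(R_e)=W_e$, whose proof in the paper runs through the same ingredients you sketch: Lemma \ref{analyMilLem}, the Arnold--Gabrielov Lemma \ref{GabrielovLem}, and Brieskorn's simultaneous resolution via Proposition \ref{MilInjProp}). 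Your two flagged concerns — the maximality of $G_{t_0}$ and the unibranch filling-in — are precisely the points the paper delegates to Proposition \ref{MonoActGloProp} and to the normality statement of Lemma \ref{analytic_cover_closure}, respectively.
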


 The orbit space $W_e\acts R(\mathbb E_6)$ was originally defined in \cite{LLSvS} and is used to parameterize the reduced Hilbert scheme of generalized twisted cubics on $X_H$. We believe the set of such orbits is a natural notion of "root system" on a cubic surface with ADE singularities.

%As an initial setup, we choose $(\mathbb P^4)^*$ as the compactification of $U$, and $\bar{\mathcal{T}}_v$ is Stein's compactification. This will suffice to answer Question \ref{Question_main} when $X$ is general, where all hyperplane sections have only ADE singularities. The following is a consequence of Theorem \ref{IntroVCThm} and Proposition \ref{Intro_MonOrbThm}.
Now Proposition \ref{IntroVCThm} and \ref{Intro_MonOrbThm} provide an answer to Question \ref{Question_main} for a general cubic threefold.

\begin{theorem}\label{Intro_thm_generalX}
    When $X$ is general, we choose the compactification $\bar{U}=(\mathbb P^4)^*$ of the base. Then 

    \begin{itemize}
        \item[(1)] Stein's compactification $\bar{\mathcal{T}}_v$ is isomorphic to the blow-up of the theta divisor $\Bl_0(\Theta)$.
        \item[(2)] A fiber of $\bar{\mathcal{T}}_v\to (\mathbb P^4)^*$ at a point $t_0$ corresponds to the orbits of the subgroup of Weyl group $W_e$ generated by the $(-2)$ curves on the minimal resolution of the cubic surface $X_{t_0}$ with ADE singularities. 
        \item[(3)]  The topological Abel-Jacobi map extends to $\bar{\mathcal{T}}_v$.
    \end{itemize}

\end{theorem}

\subsection{Extension of Topological Abel Jacobi Map} \label{Intro_Sec_ExtTAJ}
 The Abel-Jacobi map \eqref{Intro_eqn_AJ} factors through the topological Abel-Jacobi map by restricting to an open subspace
\begin{equation}\label{Intro_eqn_TAJ_cubic3fold}
    (F\times F)^{\circ}\xrightarrow{} \mathcal{T}_v\xhookrightarrow{\Psi_{\textup{top}}} \Theta\subseteq J(X).
\end{equation}

When $X$ is general, via composition $\Bl_0(\Theta)\to \Theta\hookrightarrow J(X)$, the topological Abel-Jacobi map extends. However, when there is an Eckardt point, there is no such extension because via the same map, elliptic curves are sent isomorphically onto elliptic curves in $J(X)$, and on the other hand, elliptic curves are contracted to points on $\bar{\mathcal{T}}_v$  (cf. Proposition \ref{IntroVCThm}).

 So we look for an alternative compactification of $\mathcal{T}_v$ that carries geometric meaning for limiting primitive vanishing cycles on the Eckardt hyperplane section and to which the topological Abel-Jacobi map extends.

From the point of view of moduli space, the cubic surface with an Eckardt point is unstable \cite{ACT, Bea09}. However, we can always replace it with a semistable limit through a one-parameter family. This amounts to taking a base change followed by a birational modification. Using techniques in three-dimensional MMP, we prove

\begin{proposition} (cf. Corollary \ref{Cor_StableLim})
   The semistable limit of a general pencil of hyperplane sections through an Eckard hyperplane section is isomorphic to the cubic surface $S_{lim}$ which arises as the cyclic cover of $\mathbb P^2$ on an elliptic $E$.
\end{proposition}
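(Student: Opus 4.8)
The plan is to localize the problem at the Eckardt point, reduce it to an explicit model degeneration, and then carry out a stable reduction whose central fibre can be identified by hand. First I would choose affine coordinates $(x,y,z,w)$ on $\mathbb{P}^4$ with the Eckardt point at the origin $p$ and the tangent hyperplane $T_pX=\{w=0\}$, and write $X=\{w+Q(x,y,z,w)+C(x,y,z,w)=0\}$ with linear part $w$. The defining property of an Eckardt point is that $X_{t_0}=T_pX\cap X$ is a cone with vertex $p$, which forces the quadratic term $Q|_{w=0}$ to vanish; hence $X=\{w\,u(x,y,z,w)+f_3(x,y,z)=0\}$ with $u(0)=1$ a unit, and $f_3=0$ cuts out the plane cubic $E\subset\mathbb{P}^2$ over which $X_{t_0}$ is the cone (a simple elliptic singularity $\tilde{E}_6$ at $p$). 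Realising a general pencil through $t_0$ locally as $\{w=t\}$ and eliminating $w$, the total space becomes $\mathcal{X}=\{t\,u(x,y,z,t)+f_3(x,y,z)=0\}\cong\mathbb{C}^3_{x,y,z}$, which is smooth, with family map $t=-f_3(x,y,z)$ up to a unit. I would then use generality of the pencil to check that the base locus avoids $p$, the general member is smooth, and $X_{t_0}$ is the only singular member near $t_0$, so all the degeneration is concentrated at the single point $p$.

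The heart of the argument is the stable reduction of the model family $t=f_3(x,y,z)$. I would resolve the pair $(\mathbb{C}^3,\{f_3=0\})$ by blowing up the origin, $\pi\colon W\to\mathbb{C}^3$, with exceptional divisor $P\cong\mathbb{P}^2$. Since $f_3$ has multiplicity $3$ at the origin, $\pi^*\{f_3=0\}=\widetilde{\{f_3=0\}}+3P$, and the strict transform meets $P$ precisely along $E=\{f_3=0\}\subset\mathbb{P}^2$. To remove the multiplicity $3$ along $P$ I would perform a base change $t=s^3$ of degree equal to this multiplicity, followed by normalisation, which is exactly the setting of Mumford-style semistable reduction.

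The key point is that this reduction produces, over the exceptional $P\cong\mathbb{P}^2$, the $3$-fold cyclic cover of $\mathbb{P}^2$ branched along $E$: in a chart $\pi\colon(x,y',z')\mapsto(x,xy',xz')$ the pulled-back family reads $s^3=x^3 f_3(1,y',z')$, and the substitution $s=x\tau$ normalising the total space yields $\tau^3=f_3(1,y',z')$ on $\{x=0\}$, i.e. the surface $S_{lim}=\{s^3=f_3(x,y,z)\}\subset\mathbb{P}^3$. As $E$ is a smooth plane cubic, $S_{lim}$ is a smooth cubic surface realised as the triple cyclic cover of $\mathbb{P}^2$ branched over the elliptic curve $E$, exactly as claimed, the elliptic curve being inherited from the base of the Eckardt cone. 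The new central fibre is a union $S_{lim}\cup\widetilde{C}$, where $\widetilde{C}$ is the strict transform of the old cone over $E$, and the final step is to run the relative MMP over the base disc to contract $\widetilde{C}$, leaving $S_{lim}$ as the irreducible stable limit. Equivalently, one observes that the base-changed total space $\{s^3=f_3\}$ is the affine cone over $S_{lim}$ — a canonical, indeed crepant, singularity since $-K_{S_{lim}}=\mathcal{O}(1)$ — and blowing up the vertex extracts $S_{lim}$ as the crepant exceptional divisor.

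The main obstacle I expect is the birational geometry in this last step: controlling the minimal model program for the $3$-fold total space in the presence of the simple elliptic singularity, which is log canonical but not canonical, and verifying that contracting $\widetilde{C}$ is an admissible MMP step producing an \emph{irreducible} stable limit rather than a reducible semistable configuration. A secondary point to confirm is that the base-change degree is forced to be $3$ (so the cyclic cover is a cubic surface and not some other cover), and that the local identification globalises, i.e. no further modification is needed elsewhere along a general pencil. This construction also matches the cyclic-cover description of the moduli of cubic surfaces of Allcock--Carlson--Toledo \cite{ACT}, giving an independent check that $S_{lim}$ is the correct semistable replacement of the unstable Eckardt cone.
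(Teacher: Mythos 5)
Your proposal follows essentially the same route as the paper's proof of Corollary \ref{Cor_StableLim} via Proposition \ref{Prop_Semistable_reduction}: blow up the total space of the pencil at the cone point, make a degree-$3$ base change, normalize, identify the component lying over the exceptional $\mathbb P^2$ as the triple cyclic cover of $\mathbb P^2$ branched along $E$, and then contract the ruled component to obtain a smooth family with irreducible central fiber $S_{lim}$. (The paper's remark after Proposition \ref{Prop_Semistable_reduction} even records your alternative ordering — base change first, so that the total space becomes the affine cone over $S_{lim}$, then blow up the vertex.) Your explicit local form $X=\{w\,u+f_3=0\}$ and the chart computation $s=x\tau$, $\tau^3=f_3(1,y',z')$ supply details that the paper only asserts, which is a genuine plus.

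The one place where your write-up stops short is exactly where the paper does its real work: justifying the contraction of the ruled surface $\tilde{X}_0$ (your $\widetilde{C}$). The paper computes, from $\mathcal{O}(\tilde{X}_0+Z)|_{\tilde{X}_0}\cong\mathcal{O}_{\tilde{X}_0}$ and $\mathcal{O}(Z)|_{\tilde{X}_0}=\mathcal{O}_{\tilde{X}_0}(E_{\infty})$, that $N_{\tilde{X}_0|\tilde{\mathcal{X}}}=\mathcal{O}_{\tilde{X}_0}(-E_{\infty})$, hence has degree $-1$ on every ruling, and checks that the ruling is extremal in $\overline{NE}(\tilde{\mathcal{X}}/\tilde{\Delta})$; the contraction then exists and has smooth image by the relative cone theorem \cite[Theorem 3.25]{KM} together with \cite[Theorem 3.2.8]{BS95}. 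Note also that the obstacle you anticipate — an MMP ``in the presence of the simple elliptic singularity, which is log canonical but not canonical'' — does not actually arise: after the reduction you yourself perform, the total space $\tilde{\mathcal{X}}$ is smooth, so the required step is a classical smooth blow-down of a ruled surface along rulings with normal bundle $\mathcal{O}(-1)$, not a run of a singular MMP. With that normal-bundle computation and extremality check inserted, your argument is complete and coincides with the paper's.
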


The 27 lines on the cyclic cover arise as the pullback of 9 tangent lines to the flex points of $E$. The monodromy action permutes the three sheets and permutes the 27 lines. This allows us to determine the limiting primitive vanishing cycles through a one-parameter family.

Our new compactification is the following: We blow up points $T_pX\in (\mathbb P^4)^*$ that correspond to tangent spaces of Eckardt points and denote the new base as $\tilde{(\mathbb P^4)^*}$. It keeps track of one-dimensional families through the Eckardt cone. %Our new compactification $\tilde{\mathcal{T}}_v$ is Stein's compactification of the 72-to-1 cover \eqref{Intro_eqn_pi_v} with respect to the completion $U\subseteq \tilde{(\mathbb P^4)^*}$ on the base. %applied to the finite cover \eqref{Intro_eqn_pi_v} with respect to the new compactification $U\subseteq \tilde{(\mathbb P^4)^*}$ of the base. 

\begin{theorem}\label{Intro_Thm_TAJext}
   Let $X$ be a smooth cubic threefold $X$ (with Eckardt points). We choose the compactification $\tilde{(\mathbb P^4)^*}$ of $U$. 
    
    \begin{itemize}
 \item[(1)] The Stein compactification $\tilde{\mathcal{T}}_v$ is birational to $\Bl_0(\Theta)$.
%The fiber of $\tilde{\mathcal{T}}_v\to \tilde{(\mathbb P^4)^*}$ over a general point on the exceptional divisor
    \item[(2)] The fiber $\tilde{\mathcal{T}}_v\to \tilde{(\mathbb P^4)^*}$ over a general point of an exceptional $\mathbb P^3\subseteq \tilde{(\mathbb P^4)^*}$ has cardinality $24$ and corresponds the cyclic $\Z_3$-action on the root system on the semistable limit $S_{lim}$ through a one parameter family. 
 %to the limiting primitive vanishing cycles on an Eckardt cone through a one-parameter family of hyperplane sections. It has cardinality $24$ and corresponds to the orbits of    
     \item[(3)] The topological Abel-Jacobi map extends to $\tilde{\mathcal{T}}_v\to \Bl_0J(X)$.
    \end{itemize}
\end{theorem}
This generalizes Theorem \ref{Intro_thm_generalX} to all smooth cubic threefolds. In particular, it provides a full answer to Question \ref{Question_main}.

\subsection{Tube Mapping}
%For the rest of the section, we will introduce a conjecture of Clemens, which is our motivation to study the component of $\mathcal{T}_v$ of the \'etale of the local system of vanishing cohomology. We will prove the conjecture for the case cubic threefold as a consequence of Abel-Jacobi maps.

To study the topology of the locus of the primitive vanishing cycles $\mathcal{T}_v$, one can consider the fundamental group induced from the topological Abel-Jacobi map
 \eqref{Intro_eqn_TAJ}
$$(\Psi_{\textup{top}})_*:\pi_1(\mathcal{T}_v,\alpha_0)\to \pi_1(J_{\prim},0)\cong H_{2n-1}(X,\Z)_{\prim}.$$

Equivalently, a loop in $\pi_1(J_{\prim},0)$ corresponds to a loop $l\in \pi_1(U,t_0)$ such that $l$ stabilizes $\alpha_0$ under monodromy action. The trace of the $\alpha_0$ along the loop forms a $(2n-1)$-cycle and defines a primitive homology class. Such a map is called \textit{tube mapping}. Herb Clemens conjectured that the image of the tube mapping has maximal rank.

\begin{conjecture}\label{Conj_Clemens_Tube} (Clemens)
The image of the tube mapping 
\begin{equation}\label{tube mapping}
    \{([l],\alpha_0)\: |\:[l]\in \pi_1(U,t),l_*\alpha_0=\alpha_0\}\to H_n(X,\mathbb Z)_{\textup{prim}}
\end{equation}
has maximal rank.

\end{conjecture}

%Note that when $n$ is odd, both sides of \eqref{tube mapping} are fundamental groups of $\mathcal{T}_v$ and $J_{\prim}(X)$ respectively. So the map \eqref{tube mapping} is equivalent to the map between fundamental groups induced by the topological Abel-Jacobi map $\mathcal{T}_v\to J_{\prim}(X)$ \eqref{Intro_eqn_TAJ}. So roughly speaking, the conjecture \ref{Conj_Clemens_Tube} predicts that the topology of the primitive vanishing cycle component $\mathcal{T}_v$ is "complicated enough" to describe the middle dimensional primitive cohomology of $X$. 

 Schnell proved that when $\alpha_0$ runs over all classes in vanishing cohomology, the image tube mapping has maximal rank \cite{Tube}.

We verify Conjecture \ref{Conj_Clemens_Tube} for cubic threefolds.

\begin{proposition}(cf. Proposition \ref{Prop_TubeCubic3fold}) \label{Intro_Prop_Clemens_conj}
    The tube mapping for primitive vanishing cycles \eqref{tube mapping} is subjective for cubic 3-folds.
\end{proposition}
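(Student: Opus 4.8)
The plan is to reduce the surjectivity of the tube mapping to a statement about the Abel--Jacobi map on first homology. Since $X$ is a cubic threefold we have $n=3$, so $H_3(X,\Z)_{\prim}=H_3(X,\Z)$ and $J_{\prim}(X)=J(X)$; as noted after Conjecture \ref{Conj_Clemens_Tube}, the two sides of \eqref{tube mapping} are then precisely the fundamental groups of $T_v$ and of $J(X)$, and under these identifications the tube mapping is the homomorphism $(\Psi_{\textup{top}})_*\colon \pi_1(T_v)\to \pi_1(J(X))$ induced by the topological Abel--Jacobi map. Because $J(X)$ is a complex torus, $\pi_1(J(X))$ is abelian and equals $H_1(J(X),\Z)\cong H_3(X,\Z)$, so it suffices to prove that $H_1(T_v,\Z)\to H_1(J(X),\Z)$ is onto.

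First I would exploit the factorization \eqref{Intro_eqn_TAJ_cubic3fold}. Fix a general line $L_0\in F$ and let $F^\circ\subseteq F$ be the open locus of lines $L$ that are skew to $L_0$ and for which $X_{\textup{Span}(L_0,L)}$ is a smooth cubic surface. Sending $L\mapsto [L_0]-[L]$, viewed as a primitive vanishing cycle on $X_{\textup{Span}(L_0,L)}$, defines a continuous map $j\colon F^\circ\to T_v$, and by construction $\Psi_{\textup{top}}\circ j$ is the restriction to $F^\circ$ of $L\mapsto \Psi(L_0,L)=AJ(L_0)-AJ(L)$, where $AJ\colon F\to J(X)$ is the Abel--Jacobi map of the Fano surface. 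On first homology the constant translation by $AJ(L_0)$ and the inversion $x\mapsto -x$ on the torus act as the identity and as multiplication by $-1$ respectively, so $(\Psi_{\textup{top}}\circ j)_*=-AJ_*\colon H_1(F^\circ,\Z)\to H_1(J(X),\Z)$.

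Next I would supply the two inputs that force this composite to be surjective. The complement $F\setminus F^\circ$ is a proper closed algebraic subset of the smooth surface $F$ (the curve of lines meeting $L_0$ together with the locus of singular spanned sections), hence of real codimension at least two, so any loop can be pushed off it and the inclusion induces a surjection $H_1(F^\circ,\Z)\twoheadrightarrow H_1(F,\Z)$. On the other hand, by Clemens--Griffiths \cite{CG} the Albanese of the Fano surface is the intermediate Jacobian, so $AJ_*\colon H_1(F,\Z)\to H_1(J(X),\Z)$ is an isomorphism. Composing, $AJ_*\colon H_1(F^\circ,\Z)\to H_1(J(X),\Z)$ is surjective, and since it factors as $(\Psi_{\textup{top}})_*\circ j_*$ up to sign, the homomorphism $H_1(T_v,\Z)\to H_1(J(X),\Z)$ is surjective. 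Translating back through the identifications of the first paragraph yields the claim.

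I expect the only genuinely delicate point to be the compatibility asserted in the first paragraph: that under the identification of the $\pi_1(U,t)$-stabilizer of $\alpha_0$ with $\pi_1(T_v)$ and of $H_3(X,\Z)_{\prim}$ with $\pi_1(J(X))$, the topological construction of a tube over a loop is literally computed by $(\Psi_{\textup{top}})_*$. This is the content of the construction of the topological Abel--Jacobi map in \cite{YZ_TAJ}; once it is in hand, the rest is the Albanese computation above. Note that the restriction to a \emph{single} primitive vanishing cycle $\alpha_0$ causes no difficulty, since the fiber of $T_v\to U$ is discrete, and hence a based loop in $T_v$ is exactly a loop in $U$ fixing $\alpha_0$.
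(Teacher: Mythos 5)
Your proposal is correct, and it reaches the same two pillars as the paper's proof --- the surjectivity of $\pi_1$ (or $H_1$) under removal of a divisor from a smooth variety, and the Clemens--Griffiths identification $\mathrm{Alb}(F)\cong J(X)$ --- but it gets there by a noticeably different route. The paper first passes to the compactification: it uses the open dense inclusion $T_v\subseteq \mathrm{Bl}_0(\Theta)$ to reduce to surjectivity of $\pi_1(\mathrm{Bl}_0(\Theta))\to\pi_1(J(X))$, and then restricts the lifted Abel--Jacobi map to a slice $\{p\}\times F\setminus D_p$, where $L_p$ is a line of the \emph{second type} and $D_p$ is the incidence divisor; the point of that choice (via \cite[Lemma 10.7]{CG}, which gives $p\in D_p$) is that the slice avoids the diagonal $\Delta_F$, so the restricted map lifts through $\mathrm{Bl}_{\Delta_F}(F\times F)$. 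You instead stay inside $T_v$ itself: your slice $\{L_0\}\times F^{\circ}$ (lines skew to $L_0$ spanning \emph{smooth} sections) maps directly into $T_v$ via the factorization \eqref{Intro_eqn_TAJ_cubic3fold}, and you abelianize, replacing $\pi_1$ by $H_1$, which is harmless since $\pi_1(J(X))$ is abelian. What your version buys: no blow-up geometry, no second-type line, and no appeal to the incidence-divisor lemma; moreover on $H_1$ you only need the automatic surjectivity $H_1(F,\Z)\twoheadrightarrow H_1(\mathrm{Alb}(F),\Z)$ rather than an isomorphism of fundamental groups. What the paper's version buys: it removes only the concretely understood curve $D_p$ (rather than the possibly larger locus of singular spanned sections), and along the way it establishes the stronger intermediate fact that $\pi_1(\mathrm{Bl}_0(\Theta))\to\pi_1(J(X))$ is surjective. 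One small point to make explicit in your write-up: the sign in $(\Psi_{\textup{top}}\circ j)_*=-AJ_*$ is irrelevant for surjectivity, and your identification of the tube mapping with $(\Psi_{\textup{top}})_*$ is exactly the separate proposition the paper proves just before Proposition \ref{Prop_TubeCubic3fold}, so it is legitimate to quote it rather than reprove it.
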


\subsection*{Outline}
In Section \ref{Section_Cubic3fold_Prelim}, we will review basic facts on cubic surfaces and cubic threefolds. In Section \ref{Section_Tvbar}, we study the 72-to-1 over $\mathcal{T}_v\to U$ and its various compactifications. In particular, Proposition \ref{IntroVCThm} will be proved. In section \ref{Section_Cubic3fold_Boundary}, we will relate the boundary points on $\bar{\mathcal{T}}_v$ to the Lie theory of the root system of the minimal resolution of the cubic surfaces with ADE singularities and prove Proposition \ref{Intro_MonOrbThm}. In Section \ref{Section_Cubic3fold_ExtAJ}, we'll study the extension of the topological Abel-Jacobi map $\mathcal{T}_v\to J(X)$ and prove Theorem \ref{Intro_Thm_TAJext}. In Appendix \ref{App_PVC}, we introduce the notion of primitive vanishing cycles and some basic properties. In Appendix \ref{App_Schnell}, we will review Schnell's compactification of \'etale space of a VHS and Stein's Lemma of compactification of finite analytic cover.

\subsection*{Acknowledgement} I would like to thank my advisor, Herb Clemens, for introducing me to this topic, answering my questions, and for his constant encouragement. Besides, I would like to thank Lisa Marquand, Kenji Matsuki, Wenbo Niu, Christian Schnell, Dennis Tseng, and Xiaolei Zhao for many useful communications. %I would like to thank anonymous referee for various suggestions to improve the manuscript.

\section{Preliminaries}\label{Section_Cubic3fold_Prelim}

\subsection{Root System on Cubic Surfaces}
A cubic surface $S$ can be obtained by blowing up 6 points in general position on $\mathbb P^2$. So its second cohomology $H^2(S,\Z)$ is isomorphic to $\Z^7$ with the hyperplane class $h=3e_0-e_1-\cdots-e_6$, where $e_0$ is the class of the pullback of a general line on $\mathbb P^2$ and $e_1,\ldots, e_6$ be the classes of the exceptional divisors.

The vanishing cohomology $\Hv^2(S,\Z)$ is isomorphic to the orthogonal space $h^{\perp}$, which is also isomorphic to the $\mathbb E_6$-lattice with basis $\alpha_1=e_0-e_1-e_2-e_3$, $\alpha_i=e_{i-1}-e_i$, $i=2,\ldots, 6$. The intersection pairing $(\cdot,\cdot)$ on $h^{\perp}$ is given by the Cartan matrix, where $\alpha_i^2=-2$, and $\alpha_i\cdot\alpha_j=1$ if and only if the two roots $\alpha_i$ and $\alpha_j$ are adjacent in the Dynkin diagram, and otherwise zero. %The automorphism group $\textup{Aut}(h^{\perp})$ is isomorphic to the Weyl group $W(\mathbb E_6)$ of the $\mathbb E_6$ Lie algebra. 

\begin{figure}[h]
         \centering
         \includegraphics[width=0.4\textwidth]{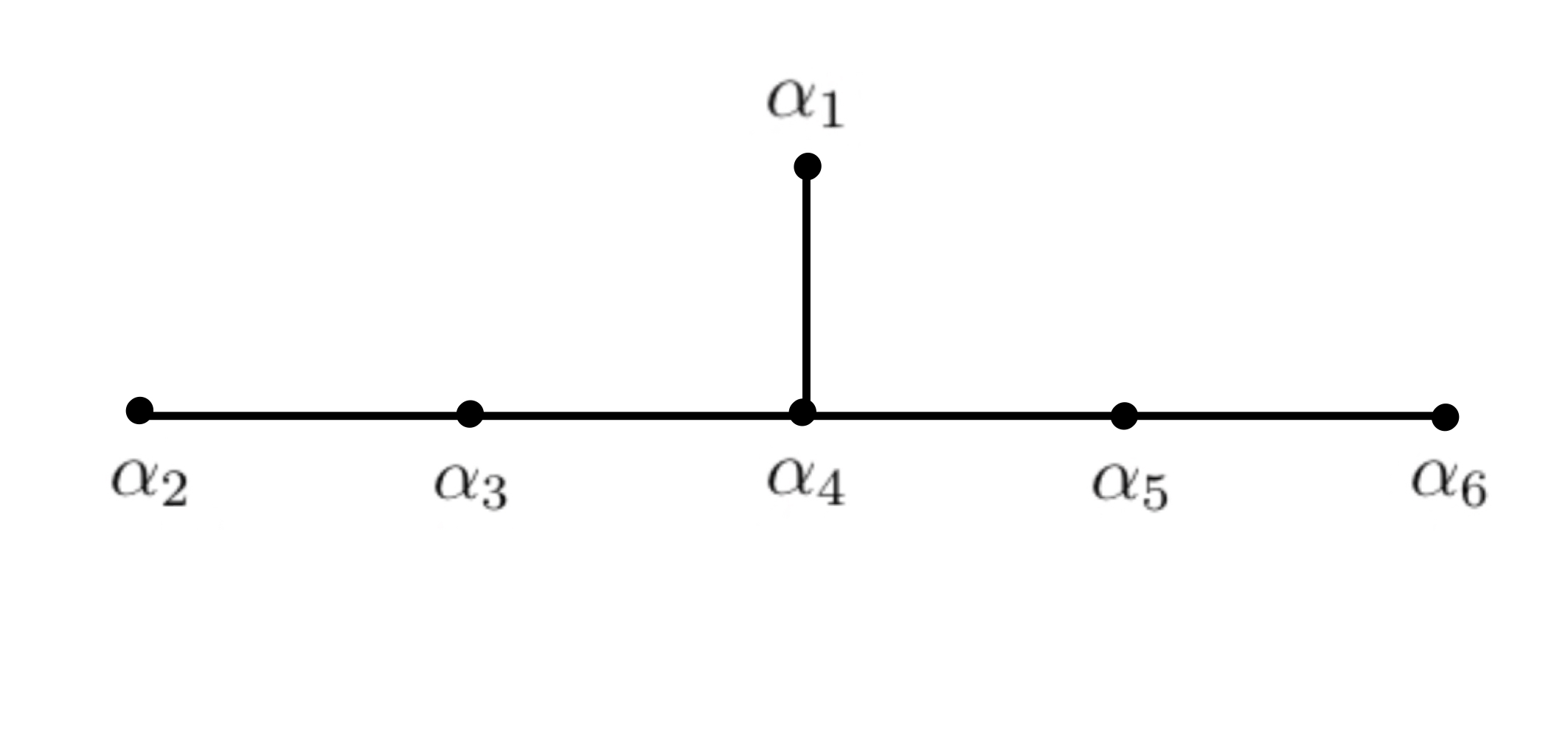}
         \caption{$\mathbb E_6$ Dynkin Diagram}
\end{figure}

\begin{definition}\label{Def_root}\normalfont
The \textit{root system} $R_S$ of the cubic surface $S$ is the set 
 \begin{equation}
     R_S=\{\alpha\in \Hv^2(S,\Z)|\alpha^2=-2\}.\label{72roots}
 \end{equation}
Call $\alpha\in R_S$ a root of $S$.
\end{definition}

The root system $R_S$ together with the intersection pairing $(\cdot,\cdot)$ is isomorphic to the root system of $\mathbb E_6$ up to a sign. In particular, it consists of 72 roots, and the automorphism of $R_S=(R_S,(\cdot,\cdot))$ is the Weyl group $W(\mathbb E_6)$, which is also the monodromy group when $S$ varies in the universal family of smooth cubic surfaces. One refers to \cite[Section 8.2]{Dolgachev} for the root system of $\mathbb E_6$, or \cite[Chapter III]{Hum} for general theory on root systems.

The Weyl group $W(\mathbb E_6)$ is generated by reflections $\{r_{\alpha}\}_{\alpha\in R_S}$, where
\begin{equation}\label{eqn_PLtransform}
    r_{\alpha}:\beta\mapsto \beta+(\beta,\alpha)\alpha
\end{equation}
is a reflection with respect to the hyperplane $H_{\alpha}=\{(\cdot,\alpha)=0\}$ associated to a root $\alpha$ and has the following geometric meaning: Consider a family of cubic surfaces $\{S_t\}_{t\in \Delta}$ parameterized by a holomorphic disk $\Delta$ such that $S_t$ is smooth when $t\neq 0$ and $S_0$ has an ordinary double point, there is a vanishing cycle $\alpha$ on nearby $S_{t_0}$. Take a loop $l$ whose class in $\pi_1(\Delta^*,t_0)$ is a generator, then the monodromy representation 
\begin{equation}\label{Eqn_PL}
    \rho_{\Delta}:\pi_1(\Delta^*,t_0)\to \Aut H^{2}(S_{t_0},\Z),
\end{equation}
is the same as \eqref{eqn_PLtransform}, where the root $\alpha\in S_t$ on the nearby smooth cubic surface is the vanishing cycle. This is called the \textit{Picard-Lefschetz transformation}.

\begin{example}\label{Exa_CubicNode}
\normalfont %(Vanishing Cycle and Effective Root)
To describe the vanishing cycle of the nodal family $\{S_t\}_{t\in \Delta}$ geometrically, we regard $S_t$ as the blow-up of 6 general points $\{p_1(t),\ldots, p_6(t)\}$ on $\mathbb P^2$. When $t=0$, the 6 points lie on a conic $Q$, so the linear system $\mathfrak{o}$ of cubics through the 6 points induces an embedding $\textup{Bl}_{p_1(t),\ldots,p_6(t)}\mathbb P^2\to \mathbb P^3$ when $t\neq 0$ and contracts the strict transform of conic $Q$ to the node of $S_0$ when $t=0$.

Take a double cover of $\tilde{\Delta}\to \Delta$ branched at $0$ and base change by $t=s^2$, there is a commutative diagram

\begin{figure}[ht]
    \centering
\begin{tikzcd}
\tilde{\mathcal{S}}\arrow[d,"f"] \arrow[r,"h"] & \mathcal{S}\arrow[dl,"g"]\\
\tilde{\Delta}
\end{tikzcd}
\end{figure}
Here $g$ is the base change family and $f$ is submersion whose fiber is $\Bl_{p_1(s),\ldots,p_6(s)}\mathbb P^2$. $h$ is a small resolution (in the category of complex analytic manifolds) that is isomorphic over $s\neq 0$ and its restriction to $s=0$ is the minimal desingularization
$$\tilde{S}_0\to S_0$$
which contracts $\tilde{Q}$ to the node. Then the vanishing cycle on $S_s$ is the flat translation of the class $[\tilde{Q}]=2e_0-e_1-\cdots-e_6$ to the nearby smooth fiber.
\end{example}

\begin{proposition}\label{VC=[L]-[M]Prop}
Every root on $S$ can be written as the difference $[L_1]-[L_2]$ for a pair of skew lines $L_1,L_2$ in $S$ in exactly 6 different ways.
\end{proposition}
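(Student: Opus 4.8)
The plan is to view the rule $(L_1,L_2)\mapsto [L_1]-[L_2]$ as a map $\Phi$ from the set of \emph{ordered} pairs of skew lines on $S$ to the root system $R_S$, and to prove that $\Phi$ is surjective with every fibre of cardinality exactly $6$. Order matters here, since swapping the two lines negates the difference; the count of ``$6$ different ways'' is thus a count of ordered pairs $(L_1,L_2)$ producing a fixed root. First I would verify $\Phi$ is well defined. Every one of the $27$ lines is a $(-1)$-curve, so $[L_i]^2=-1$ and $[L_i]\cdot h=1$, while skewness of $L_1,L_2$ means $[L_1]\cdot[L_2]=0$. Hence for $\alpha:=[L_1]-[L_2]$ one has $\alpha^2=[L_1]^2-2\,[L_1]\cdot[L_2]+[L_2]^2=-2$ and $\alpha\cdot h=[L_1]\cdot h-[L_2]\cdot h=0$, so $\alpha\in R_S$ by Definition \ref{Def_root}.

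Next I would exploit $W(\mathbb E_6)$-equivariance to reduce to a single root. The isometries of $\Pic(S)\cong I^{1,6}$ fixing $h$ form a group isomorphic to $\Aut(h^\perp)\cong W(\mathbb E_6)$; this group permutes the $27$ lines preserving their intersection numbers, hence acts on ordered pairs of skew lines, and $\Phi$ is equivariant for this action and the natural action on $R_S$. Since $\mathbb E_6$ is simply laced, $W(\mathbb E_6)$ acts transitively on the $72$ roots. Consequently the image of $\Phi$ is a nonempty $W(\mathbb E_6)$-invariant subset of $R_S$, hence all of $R_S$ (surjectivity), and any two fibres of $\Phi$ are carried to one another by a suitable Weyl element, so all fibres have the same cardinality. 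It therefore suffices to compute the fibre over one convenient root.

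Finally I would enumerate $\Phi^{-1}(e_1-e_2)$ directly. Adding $e_1-e_2$ to each of the $27$ line classes and recording when the result is again a line class, one finds exactly the six ordered pairs $(E_1,E_2)$, $(G_1,G_2)$, and $(F_{2k},F_{1k})$ for $k=3,4,5,6$; each pair is readily checked to be skew (its intersection product vanishes) and to satisfy $[L_1]-[L_2]=e_1-e_2$. This gives the value $6$, and by the previous paragraph every root is hit in exactly $6$ ways. As a consistency check, each line meets $10$ of the others and so is skew to $16$, giving $27\cdot 16=432$ ordered skew pairs, which matches $72\cdot 6$.

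I expect the only delicate points to be the completeness of the finite enumeration—verifying that no line class $[L_2]$ other than the six listed satisfies $[L_2]+(e_1-e_2)\in\{\text{line classes}\}$—together with invoking correctly the standard identification of the isometries of $I^{1,6}$ fixing $h$ with $W(\mathbb E_6)$ and its transitivity on roots. Neither is deep, but both must be stated with care, since the whole $6$-to-$1$ conclusion rests on combining the single explicit fibre computation with the transitivity input.
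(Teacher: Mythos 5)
Your proposal is correct and follows essentially the same route as the paper: reduce to a single root by the transitive $W(\mathbb E_6)$-symmetry (the paper does this by choosing a planar representation in which the root becomes $2e_0-e_1-\cdots-e_6$ and exhibiting the six pairs $(G_i,E_i)$), then enumerate the fibre explicitly. Your closing count $27\cdot 16=432=72\cdot 6$ is a nice addition—combined with surjectivity and equal fibre sizes it would even let you skip the enumeration entirely.
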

\begin{proof}

One can choose a planar representation of $S$ as a blow-up of 6 points on $\mathbb P^2$ corresponding to a given root $\alpha$, so $\alpha$ is expressed as 
\begin{equation}
    \alpha=2e_0-e_1-\cdots-e_6.\label{eqn_cubicsurface_1}
\end{equation}
On the other hand, $[G_i]=2e_0-\sum_{j\neq i}e_j$ and $E_i=e_i$ are disjoint lines (i.e., $(-1)$ classes of degree one), one has $\alpha=[G_i]-[E_i]$ for each $i=1,\ldots,6$. It's direct to check these are the only ways to express $\alpha$ as the class of difference of two lines.
\end{proof}

\subsection{Locus of Primitive Vanishing Cycles}
We want a version of Proposition \ref{VC=[L]-[M]Prop} in families. Let $X$ be a smooth cubic threefold, and $U\subseteq (\mathbb P^4)^*$ parameterizes smooth hyperplane sections of $X$. Recall $T$ is the \'etale space of the local system of vanishing cohomology $\mathcal{H}_{\textup{van}}^2$ over $U$. 

Let $\mathcal{T}_v=\{(\alpha_t,t)\in T|\alpha_t^2=-2,\  \alpha_t\cdot h=0\}$. Then the natural projection 
\begin{equation}\label{Eqn_LocusPV_CT}
   \pi_v: \mathcal{T}_v\to U
\end{equation}
is a covering space of degree 72 whose fiber at $t$ is identified with the root system $R_{t}$ on $X_{t}$ (cf. Definition \ref{Def_root}).

\begin{definition}\normalfont
    We call $\mathcal{T}_v$ the locus of primitive vanishing cycles. 
\end{definition}

Note the definition of $\mathcal{T}_v$ above agrees with the Definition \ref{Def_Tv} in the general situation because of the following proposition. We will introduce the general definition in the Appendix. In particular, for a hyperplane section of a cubic threefold, a root is a primitive vanishing cycle. We will use the two terms interchangeably.
   
\begin{proposition}
    The covering space \eqref{Eqn_LocusPV_CT} is connected.
\end{proposition}
\begin{proof}
%By Example \ref{Exa_CubicNode}, the set of primitive vanishing cycles on $X_t$ contains a root, so $\mathcal{T}_v\to U$ is a connected component of \eqref{Eqn_RU}. So to show $\mathcal{R}=\mathcal{T}_v$, it suffices to show the covering space \eqref{Eqn_RU} itself is connected. 

Consider the set of pairs of skew lines on smooth hyperplane sections
\begin{equation}\label{eqn_M=L1L2t}
    (F\times F)^{\circ}=\{(L_1,L_2,t)\in F\times F\times U|L_1, L_2\subseteq Y_t, L_1\cap L_2=\emptyset\}.
\end{equation}

The projection $\pi:(F\times F)^{\circ}\to U$ to the third coordinate is a natural covering map, whose fiber over $t$ consists of pairs of skew lines on $X_t$. Then Proposition \ref{VC=[L]-[M]Prop} implies there is a 6-to-1 covering map $e:(L_1,L_2)\mapsto [L_1]-[L_2]$ over $U$.
\begin{figure}[ht]
    \centering
    \begin{equation}\label{Diagram_MRU}
\begin{tikzcd}
(F\times F)^{\circ}\arrow[dr,"\pi"] \arrow[r,"e"] &  \mathcal{T}_v \arrow[d,"\pi_v"]\\
&U.
\end{tikzcd}
    \end{equation}
\end{figure}

Now it suffices to show $(F\times F)^{\circ}$ is connected. Since any pair of disjoint lines $(L_1,L_2)$ spans a hyperplane in $\mathbb P^4$ and determines the hyperplane section $X_t$ containing both of the lines, the projection of \eqref{eqn_M=L1L2t} to the first two coordinate $(F\times F)^{\circ}\hookrightarrow F\times F$ is an inclusion. Consequently, $(F\times F)^{\circ}$ is a complement of a divisor, and therefore connected. 
\end{proof}

Alternatively, the connectivity also follows from that monodromy group permuting the 27 lines of the universal family of smooth hyperplane sections $\mathcal{X}^{\textup{sm}}\to U$ of $X$ is isomorphic to the Weyl group $W(\mathbb E_6)$ (c.f. \cite[VI.20]{Segre}, \cite[Theorem 0.1]{Cheng}). 

\

%is a connected covering space containing a vanishing cycle of nodal degeneration. The next result will show primitive vanishing cycle is the same as a root for cubic surfaces that are hyperplane sections of $X$.

%\begin{proposition}\label{Prop_72to1}
%$\pi_v$ is a connected 72-to-1 covering space. Moreover, the set $PV_t$ of primitive vanishing cycles on $X_t$ is the same as the root system $R(X_t)$.
%\end{proposition}

\subsection{Eckardt Points}
\begin{definition}\normalfont
An \textit{Eckardt point} $p$ on a cubic threefold $X$ is a closed point such that infinitely many lines on $X$ pass through $p$.  
\end{definition}

\begin{proposition}\label{Prop_Eckardt}
   The following statements are equivalent:
\begin{itemize}
    \item[(1)] $p\in X$ is an Eckardt point.
    \item[(2)] Lines on $X$ through $p$ form an elliptic curve.
    \item[(3)] The tangent hyperplane section $T_pX\cap X$ of $X$ at $p$ is a cone over a smooth plane cubic curve.
    \item[(4)] The hyperplane $H$ tangents to $X$ at $p$ and $H\cap X$ has an elliptic singularity. 
    
\end{itemize}  
\end{proposition}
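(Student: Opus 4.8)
The plan is to work in explicit coordinates adapted to $p$. Choosing homogeneous coordinates on $\mathbb{P}^4$ so that $p=[1:0:0:0:0]$, I would write the defining cubic as
$$F = x_0^2 L_1 + x_0 Q_2 + C_3,$$
where $L_1,Q_2,C_3$ are forms of degrees $1,2,3$ in $x_1,\dots,x_4$. A direct computation gives $T_pX=\{L_1=0\}$, and restricting $F$ to the line through $p$ in direction $v\in\mathbb{P}^3$ shows that this line lies on $X$ exactly when $L_1(v)=Q_2(v)=C_3(v)=0$. Normalizing $L_1=x_1$ and setting $\bar Q_2=Q_2|_{x_1=0}$, $\bar C_3=C_3|_{x_1=0}$, the lines through $p$ are parametrized by the plane locus $Z=\{\bar Q_2=\bar C_3=0\}\subseteq\mathbb{P}^2$ (note every line on $X$ through $p$ automatically lies in $T_pX$, since $L\subseteq X$ forces $T_pL=L\subseteq T_pX$), while the tangent section is the singular cubic surface $Y:=T_pX\cap X=\{x_0\bar Q_2+\bar C_3=0\}$. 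Everything then hinges on the invariant $\bar Q_2$: either $\bar Q_2\not\equiv0$, so $Y$ has multiplicity two at $p$ with tangent cone $\{\bar Q_2=0\}$, or $\bar Q_2\equiv0$, in which case the equation is independent of $x_0$ and $Y$ is the cone with vertex $p$ over the plane cubic $C=\{\bar C_3=0\}$.

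Second, I would isolate the two consequences of smoothness of $X$ that drive the argument. \emph{(A)} A smooth cubic threefold contains no $2$-plane: if $\Pi\cong\mathbb{P}^2\subseteq X$ were cut out by linear forms $\ell,\ell'$, then $F=\ell A+\ell' B$ for quadrics $A,B$, and $A|_\Pi,B|_\Pi$ are two conics on $\Pi$ sharing a zero by Bézout, producing a singular point of $X$ — a contradiction. \emph{(B)} The Gauss map $X\to(\mathbb{P}^4)^*$, $q\mapsto T_qX$, is finite, so for $H=T_pX$ the set $\{q:T_qX=H\}=\mathrm{Sing}(Y)$ is finite; hence $Y$ has only isolated singularities.

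Third, I would run the equivalences. The implication $(2)\Rightarrow(1)$ is immediate, and $(1)$ is by definition $\dim Z\ge1$, i.e. $\bar Q_2$ and $\bar C_3$ share a common component. If that component is a line, the associated pencil of lines on $X$ sweeps out a plane through $p$ contained in $X$; if it is a conic, then $\bar C_3=\bar Q_2\cdot\ell$ and $Y=\{\bar Q_2(x_0+\ell)=0\}$ contains the plane $\{x_1=0,\ x_0+\ell=0\}\subseteq X$. Both contradict (A), so $\bar Q_2\equiv0$ is the only survivor. Then $Y$ is the cone over $C$ and $Z=C$; by (B) this cone has isolated singularities, which forces $C$ to be smooth (a singular $C$ would make $Y$ singular along an entire ruling). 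This yields $(3)$, and since the smooth plane cubic $C$ is an elliptic curve parametrizing the lines through $p$, it yields $(2)$. For $(3)\Leftrightarrow(4)$ I would use that the cone over a smooth plane cubic is precisely the simple elliptic singularity $\tilde E_6$, giving $(3)\Rightarrow(4)$; conversely an elliptic singularity cannot be a double point, since every double point of a cubic surface is a rational double point of ADE type, so $Y$ has multiplicity three at $p$, i.e. $\bar Q_2\equiv0$, and (B) again identifies $Y$ as the cone over a smooth cubic.

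The \emph{main obstacle} is the step $(1)\Rightarrow\bar Q_2\equiv0$: the case analysis excluding a common component of $\bar Q_2$ and $\bar C_3$ with $\bar Q_2\not\equiv0$, where smoothness of $X$ must be invoked exactly through the absence of a plane in $X$. The secondary technical input is the classification of cubic surface singularities — double points are rational (ADE) while the triple point is simple elliptic — which I would cite rather than reprove, and which is what makes the singularity-type characterization $(4)$ match the cone description $(3)$.
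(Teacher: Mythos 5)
Your proof is correct, but it takes a genuinely different route from the paper's. The paper's proof is essentially a citation: the equivalence $(1)\Leftrightarrow(2)\Leftrightarrow(3)$ is quoted from Clemens--Griffiths \cite[Lemma 8.1]{CG}, and $(3)\Leftrightarrow(4)$ is read off from the Bruce--Wall classification of normal cubic surfaces (the paper's Lemma \ref{classicication of cubic surfaces}). You instead reprove the Clemens--Griffiths lemma from scratch in adapted coordinates: the normal form $F=x_0^2L_1+x_0Q_2+C_3$, the identification of lines through $p$ with $\{\bar Q_2=\bar C_3=0\}\subseteq\mathbb P^2$, and the dichotomy $\bar Q_2\not\equiv 0$ versus $\bar Q_2\equiv 0$, with smoothness of $X$ entering through exactly two inputs: no $2$-plane lies on a smooth cubic threefold (your argument (A) is complete), and finiteness of the Gauss map. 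You assert (B) without proof; it is standard, and for a smooth hypersurface it is elementary --- the Gauss map is given by the partials of $F$, which have no common zero by Euler's relation, so a curve contracted by it would carry a zero of all partials, contradicting smoothness --- so citing or sketching this would close the only real gap. Your treatment of $(3)\Leftrightarrow(4)$ ultimately leans on the same external input as the paper (double points of normal cubic surfaces are ADE, the triple point is the simple elliptic $\tilde E_6$ cone), so there the two proofs coincide in substance. What your approach buys is a self-contained argument and an explicit invariant ($\bar Q_2$) detecting Eckardt points, at the cost of length; the paper's buys brevity. Two small points to tighten: your case analysis should explicitly dispose of the degenerate case $\bar C_3\equiv 0$ (then $Y\supseteq\{x_0=x_1=0\}$, excluded by (A) the same way), and in $(3)$ one should note that the vertex of the cone is necessarily $p$ (it is the unique singular point of the cone, and $p$ is always singular on $T_pX\cap X$), which is what makes your cycle $(1)\Rightarrow(3)\Rightarrow(4)\Rightarrow(2)\Rightarrow(1)$ close up.
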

\begin{proof}
$(1)\leftrightarrow(2)\leftrightarrow(3)$ is due to \cite[Lemma 8.1]{CG}. $(3)\leftrightarrow(4)$ is by classification of normal cubic surface below. 
\end{proof}

\begin{lemma}\label{classicication of cubic surfaces} (cf. \cite{Bruce-Wall} and \cite[section 9.2.2]{Dolgachev})
Let $S$ be a normal cubic surface, then

(i) $S$ has at worse ADE singularities and has at most 27 lines, or

(ii) $S$ has an elliptic singularity and has a one-parameter family of lines.
\end{lemma}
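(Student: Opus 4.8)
The plan is to localize at each singular point, read off the two possible multiplicities, and then split the analysis according to whether the tangent cone degenerates completely. Since $S$ is normal, its singular locus is finite, so I would fix a singular point $p$, place it at $[0:0:0:1]$, and write the defining cubic as $F = x_3\,Q(x_0,x_1,x_2) + C(x_0,x_1,x_2)$ with $Q$ quadratic and $C$ cubic (the coefficients of $x_3^3$ and $x_3^2$ must vanish for $p$ to lie on $S$ as a singular point). Dehomogenizing in the chart $x_3 = 1$ gives the local equation $f = Q + C$, so the multiplicity of $S$ at $p$ is $2$ if $Q \not\equiv 0$ and $3$ if $Q \equiv 0$, and no other value is possible for a cubic.

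First I would dispose of the triple-point case. If $Q \equiv 0$ then $F = C(x_0,x_1,x_2)$ is independent of $x_3$, so $S$ is the projective cone with vertex $p$ over the plane cubic curve $E = \{C = 0\} \subseteq \mathbb{P}^2$. If $E$ were singular, the cone would be singular along the entire ruling over a singular point of $E$, contradicting that $S$ is normal and hence has isolated singularities; therefore $E$ is a smooth plane cubic, i.e. an elliptic curve. The vertex is then a simple elliptic singularity (of type $\tilde E_6$, since the exceptional curve of the minimal resolution is $E$ with self-intersection $-3$), and the rulings form a one-parameter family of lines parameterized by $E$, with no further lines because the smooth base contains none. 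This is exactly alternative (ii), and one checks that an irreducible normal cubic admits at most one such vertex.

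Next, assuming every singular point is a double point ($Q \not\equiv 0$), I would show each is a rational double point, i.e. of type ADE. The key input is the analytic classification of isolated double-point hypersurface singularities: after the splitting lemma one reduces $f = Q + C$ according to $\mathrm{rank}(Q) \in \{1,2,3\}$, and finite determinacy together with the fact that no terms beyond degree $3$ are available forces the normal form into the list $A_n, D_n, E_6, E_7, E_8$ (the degree bound capping $n$). This normal-form bookkeeping is the main obstacle and is precisely what I would cite from Bruce--Wall; I would either reproduce the $\mathrm{rank}\,2$ and $\mathrm{rank}\,1$ reductions or invoke their table, the essential point being that a cubic supplies no room for a non-simple (non-ADE) or non-isolated double point.

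Finally, for the line count in case (i), I would pass to the minimal resolution $\pi : \tilde S \to S$. Because ADE singularities are rational double points, $\pi$ is crepant, so $\tilde S$ is a weak del Pezzo surface of degree $3$, with $-K_{\tilde S} = \pi^*(-K_S)$ nef and big and $K_{\tilde S}^2 = 3$. The (strict transforms of the) lines on $S$ are among the $(-1)$-curves of $\tilde S$, and on a weak del Pezzo surface of degree $3$ there are at most $27$ such curves, with equality exactly when $\tilde S = S$ is smooth; finiteness of the line set also follows directly, since a positive-dimensional family of lines would sweep out $S$ and force it to be a cone, already handled in (ii). This yields the bound of $27$ lines and completes alternative (i). The dichotomy between (i) and (ii) is then exhaustive, as the two cases correspond precisely to the absence or presence of a triple point on $S$.
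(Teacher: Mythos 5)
The paper offers no proof of this lemma at all---it is quoted directly from Bruce--Wall and \cite[Section 9.2.2]{Dolgachev}---and your outline is a faithful reconstruction of that classical argument: the multiplicity dichotomy at a singular point, the cone-over-a-smooth-plane-cubic analysis in the triple-point case, and the bound of $27$ on $(-1)$-curves of the weak del Pezzo resolution are all correct, while the one genuinely hard step (that an isolated double point on a cubic is necessarily simple, i.e.\ of type ADE) is deferred to exactly the reference the paper itself cites. Your proposal is therefore correct and rests on the same source as the paper, so there is no discrepancy to report.
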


Consequently, an Eckardt point $p\in X$ corresponds to an elliptic curve $E\subseteq F$. We will unspokenly use this correspondence throughout the paper. Conversely, according to \cite{Rou}, any elliptic curve of $F$ arises from such a way. There are at most finitely many Eckardt points on a smooth cubic threefold (the maximal number is 30 reached by Fermat cubic) \cite[p.315]{CG}. 

The following lemma is well-known. I learned the proof from Dennis Tseng.

\begin{lemma}\label{Eckardt Lemma}
A general cubic threefold $X$ has no Eckardt points.
\end{lemma}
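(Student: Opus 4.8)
The natural approach is a dimension count on an incidence variety. Parametrize cubic threefolds by the projective space $\mathbb{P}^{34}$ of cubic forms $F$ on $\mathbb{P}^4$ (there are $\binom{7}{3}=35$ cubic monomials in five variables), and set
\[
I=\{(X,p)\in \mathbb{P}^{34}\times \mathbb{P}^4 : p \text{ is a smooth Eckardt point of } X\}.
\]
Since a general cubic threefold is smooth and, being smooth, all its points are smooth, it suffices to show that the first projection $I\to \mathbb{P}^{34}$ is not dominant: then the general cubic is simultaneously smooth and outside the (proper) image, hence has no Eckardt point. I will bound $\dim I$ by computing the fibers of the second projection $I\to \mathbb{P}^4$, $(X,p)\mapsto p$.

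Because $\mathrm{PGL}_5$ acts transitively on $\mathbb{P}^4$ and carries Eckardt points to Eckardt points, every fiber of $I\to\mathbb{P}^4$ has the same dimension, so I may fix $p=[1:0:0:0:0]$ and compute the dimension of the subvariety $V_p\subseteq \mathbb{P}^{34}$ of cubics for which $p$ is an Eckardt point. Expanding a cubic form in powers of $x_0$,
\[
F=c\,x_0^3+x_0^2L+x_0Q+C,
\]
with $L,Q,C$ forms of degrees $1,2,3$ in $x_1,\dots,x_4$, the condition $p\in X$ is $c=0$, and then the tangent hyperplane to $X$ at the smooth point $p$ is $T_pX=\{L=0\}$ (assuming $L\neq0$). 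By Proposition \ref{Prop_Eckardt}(3), $p$ is an Eckardt point exactly when $T_pX\cap X$ is a cone with vertex $p$. Restricting $F$ to $\{L=0\}$ kills the $x_0^2L$ term, so the section is $x_0\,Q|_{\{L=0\}}+C|_{\{L=0\}}$, and this is a cone with vertex $p$ precisely when its unique $x_0$-linear part vanishes, i.e.\ $Q|_{\{L=0\}}=0$, equivalently $L\mid Q$.

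Now I count. The vanishing $c=0$ is one condition; for fixed $L\neq0$ the divisibility $L\mid Q$ forces $Q=LM$ for a linear form $M$, cutting the $10$-dimensional space of quadrics down to the $4$-dimensional subspace $L\cdot(\text{linear forms})$, hence $6$ further conditions. Thus $V_p$ has codimension $7$, and the map $(L,M,C)\mapsto x_0^2L+x_0LM+C$ (injective where $L\neq0$) shows $\dim V_p=4+4+20-1=27$ in $\mathbb{P}^{34}$. Consequently $\dim I=27+\dim\mathbb{P}^4=31<34$, so the image of $I\to\mathbb{P}^{34}$ is a proper subvariety and the general cubic threefold has no Eckardt point.

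The argument is essentially routine bookkeeping; the only points requiring care are (i) letting the tangent hyperplane $\{L=0\}$ vary with $F$ rather than fixing it, which is what keeps the fiber dimension at $27$ instead of undercounting, and (ii) observing that only an \emph{upper} bound on $\dim I$ is needed, so I may replace the genuine Eckardt condition (a cone over a \emph{smooth} plane cubic) by the weaker, purely linear-algebraic cone condition $L\mid Q$ without affecting the conclusion. Constancy of the fiber dimension over $\mathbb{P}^4$ is guaranteed by the $\mathrm{PGL}_5$-homogeneity noted above, and the reduction to smooth $X$ uses only that smoothness is a dense open condition in $\mathbb{P}^{34}$.
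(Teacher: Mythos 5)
Your proof is correct, and it rests on the same underlying strategy as the paper's — a dimension count showing that the locus of cubic threefolds admitting an Eckardt point has dimension at most $31<34$ in $\mathbb P^{34}$ — but the count is organized along a genuinely different decomposition. The paper fibers the incidence problem over the hyperplane $H\in(\mathbb P^4)^*$ rather than over the point $p\in\mathbb P^4$: it maps the $38$-dimensional space $\mathbb P^{34}\times(\mathbb P^4)^*$ to the space $W$ of cubic surfaces in $\mathbb P^4$ (constant fiber dimension $15$) and pulls back the locus $\mathcal C\subseteq W$ of cones over plane cubics, whose codimension $7$ comes from the parameter count $3\ (\text{vertex})+9\ (\text{plane cubic})=12$ inside the $\mathbb P^{19}$ of cubic surfaces in a fixed $\mathbb P^3$. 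You instead fiber over the prospective Eckardt point $p$, using $\mathrm{PGL}_5$-homogeneity to fix $p$, and your codimension $7$ emerges from explicit linear algebra: $c=0$ (one condition) plus $L\mid Q$ (six conditions), with the parametrization $(L,M,C)\mapsto x_0^2L+x_0LM+C$ certifying $\dim V_p=27$; this reproduces the same total dimension $31$. Your route is more elementary and self-contained — no knowledge of the dimension of the family of cone surfaces is needed — and your point (ii), that the genuine Eckardt condition (cone over a \emph{smooth} plane cubic) may be relaxed to the weaker condition $L\mid Q$ because only an upper bound is required, is exactly the right precaution; the same relaxation is implicit in the paper, which likewise bounds the Eckardt locus by the full cone locus. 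What the paper's formulation buys is the auxiliary space $W$ of cubic surfaces and its cone locus, objects it reuses in analyzing singular hyperplane sections; what yours buys is transparency about where the number $7$ comes from and an explicit verification, via the tangent-hyperplane computation, that the cone condition is precisely $Q\equiv 0 \pmod L$.
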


\begin{proof}
Denote by $C$ the locus in the universal family $\mathbb P^{19}=\mathbb P(\textup{Sym}^3\mathbb C^4)$ of cubic surfaces that parameterizes cone over plane cubic curves. Then $\dim C=12$. Let $W$ be the space of all cubic surfaces in $\mathbb P^4$. Since every cubic surface sits in exactly one hyperplane section, then there is a natural projection $p: W\to (\mathbb P^4)^*$, whose fiber is isomorphic to $\mathbb P^{19}$. Set $\mathbb P^{34}=\mathbb P(\textup{Sym}^3\mathbb C^5)$ to be the space of all cubic hypersurfaces in $\mathbb P^4$. Then the map 
$$f: \mathbb P^{34}\times (\mathbb P^4)^*\to W,\ (X,H)\mapsto X\cap H,$$
by sending a cubic threefold to a hyperplane section has a constant fiber dimension 15.  Let $\mathcal{C}\subseteq W$ be the locus of the cone over plane cubic curves, then $\textup{codim}_{W}\mathcal{C}=7$. Therefore the preimage $f^{-1}(\mathcal{C})$ has codimension $7$ as well. It follows that its image in $\mathbb P^{34}$ under the projection to the first coordinate has codimension at least 3, which completes the proof.
\end{proof}

\subsection{Abel-Jacobi Map}
The intermediate Jacobian $J(X)$ of a smooth cubic threefold $X$ is a principally polarized abelian variety of dimension 5. It has a theta divisor $\Theta$ unique up to a translation. Beauville \cite{Beauville} showed that $\Theta$ has a unique singularity $0$, and the projective tangent cone $\mathbb PT_0\Theta$ is isomorphic to the cubic threefold $X$ itself. This provides an alternative proof of the Torelli theorem for cubic threefold.

\begin{lemma}\label{Lemma_BlTheta}
The blow-up $\Bl_0(\Theta)$ is smooth, with the exceptional divisor isomorphic to the cubic threefold $X$.
\end{lemma}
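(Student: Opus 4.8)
The plan is to reduce the statement to a single local computation at the triple point $0\in\Theta$, using Beauville's description of the tangent cone quoted above. Since $J(X)$ is an abelian variety it is smooth, and $\Theta$ is a hypersurface in it; because $0$ is the \emph{only} singular point of $\Theta$, the map $\Bl_0(\Theta)\to\Theta$ is an isomorphism over $\Theta\setminus\{0\}$, which is smooth. Hence the only things left to check are that $\Bl_0(\Theta)$ is smooth along the exceptional divisor over $0$ and that this exceptional divisor is isomorphic to $X$.

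First I would pick local analytic coordinates $z_1,\ldots,z_5$ on $J(X)$ centered at $0$ and write a local equation $f=F_3+F_4+\cdots$ of $\Theta$, with $F_d$ homogeneous of degree $d$. Beauville's theorem, that $\mathbb{P}T_0\Theta\cong X$, says precisely that $0$ has multiplicity exactly $3$ and that the leading form $F_3$ is (for a suitable choice of coordinates) the cubic defining $X\subseteq\mathbb{P}^4$; in particular $\{F_3=0\}\subseteq\mathbb{P}^4$ is smooth. I would then identify $\Bl_0(\Theta)$ with the strict transform $\tilde\Theta$ inside the blow-up $\Bl_0\mathbb{C}^5\to\mathbb{C}^5$, whose exceptional divisor is $E\cong\mathbb{P}^4$. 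In the affine chart $z_1=u$, $z_i=uv_i$ $(i\ge2)$ the pullback of $f$ factors as $u^3\tilde f$ with
\[
\tilde f=F_3(1,v_2,\ldots,v_5)+u\,F_4(1,v_2,\ldots,v_5)+u^2F_5(1,v_2,\ldots,v_5)+\cdots,
\]
so $\tilde f$ is the local equation of $\tilde\Theta$ and $u=0$ cuts out $E$. Setting $u=0$ gives $\tilde\Theta\cap E=\{F_3(1,v)=0\}$, which is exactly $X$ in this chart; running the same computation over the five standard charts identifies the exceptional divisor with $\mathbb{P}T_0\Theta=X$.

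Finally, to establish smoothness along $E$, I would compute the differential of $\tilde f$ at a point $(u=0,v)$ with $F_3(1,v)=0$, where one finds $\partial_{v_i}\tilde f|_{u=0}=(\partial_{z_i}F_3)(1,v)$ for $i\ge2$. The genuinely delicate point — and the step I expect to be the crux — is to rule out the simultaneous vanishing of all these affine partials, since a priori the smoothness of $X$ only guarantees that the \emph{full} projective gradient $\nabla F_3$ is nonzero at $[z]$, and the missing derivative $\partial_{z_1}F_3$ is not among the chart coordinates. Here the Euler relation $\sum_j z_j\,\partial_{z_j}F_3=3F_3$ resolves the issue: at a point of $X$ with $z_1=1$ it reads $(\partial_{z_1}F_3)(1,v)+\sum_{i\ge2}v_i(\partial_{z_i}F_3)(1,v)=3F_3(1,v)=0$, so if all $(\partial_{z_i}F_3)(1,v)=0$ for $i\ge2$ then $(\partial_{z_1}F_3)(1,v)=0$ as well, forcing $\nabla F_3(1,v)=0$ and contradicting the smoothness of $X$. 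Therefore $d\tilde f\neq0$ at every point of $\tilde\Theta\cap E$, so $\Bl_0(\Theta)$ is smooth there; combined with smoothness away from $0$ and the identification of the exceptional divisor, this proves the lemma. In essence the whole argument is the general principle that blowing up a hypersurface singularity whose projectivized tangent cone is smooth resolves it in one step, and the only input special to the present situation is Beauville's computation identifying that tangent cone with the smooth cubic threefold $X$.
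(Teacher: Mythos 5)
Your proof is correct and follows essentially the same route as the paper, which states this lemma as a direct consequence of Beauville's theorem that $\Theta$ has a unique singular point, a triple point whose projectivized tangent cone is $X$, and offers no further argument. Your contribution is to write out the standard local computation (strict transform in charts plus the Euler-relation step ruling out simultaneous vanishing of the affine partials) that the paper, following Beauville, leaves implicit; this is exactly the verification one would expect and it is carried out correctly.
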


%The Abel-Jacobi map  \cite{CG} on cubic threefold $X$
%\begin{equation}
%    \Psi: F\times F\to J(X) \label{AJ}
%\end{equation}
%is generically 6-to-1 onto the theta divisor $\Theta$ and contracts the diagonal $\Delta_F$ to the unique singularity $0$ of $\Theta$. 

Denote $\Bl_{\Delta_F}(F\times F)$ the blow-up of the diagonal, then according to Beauville \cite{Beauville}, the Abel-Jacobi map extends to a morphism on the blow-ups

\begin{figure}[ht]
    \centering
    \begin{equation}\label{diagram_blowupAJ}
\begin{tikzcd}
\textup{Bl}_{\Delta_F}(F\times F)\arrow[d,"\sigma"] \arrow[r,"\tilde{\Psi}"] &  \textup{Bl}_0(\Theta) \arrow[d]\\
F\times F\arrow[r,"\Psi"]&\Theta.
\end{tikzcd}
    \end{equation}
\end{figure}

%The exceptional divisor $\mathcal{E}$ of $\sigma$ is a $\mathbb P^1$ bundle over $F$ and is isomorphic to the projectivized tangent bundle $T_F$ of $F$.

%Moreover, there is a geometric interpretation of the map $\tilde{\Psi}$ on the exceptional divisor described in \cite[Proposition 12.31]{CG}. Consider the incidence subvariety $I=\{(t,x)\in F\times X|x\in L_t\}$ together with the two projections, where $p_1$ defines a $\mathbb P^1$-bundle and $p_2$ is generically finite.
%\begin{figure}[ht]
%    \centering
%    \begin{equation}\label{diagram_incidence}
%\begin{tikzcd}
%I\arrow[r,"p_2"]\arrow[d,"p_1"']&X\\
%F.
%\end{tikzcd}
%    \end{equation}
%\end{figure}

%\begin{lemma} \label{lemma_I}(cf. \cite[p.342-p.345]{CG}, \cite{Tju}, \cite{AT}) There is an isomorphism $\mathcal{E}\cong I$ as $\mathbb P^1$-bundle, and $\tilde{\Psi}|_{\mathcal{E}}$ is identified with $p_2$. In other words, the restriction of the diagram \eqref{diagram_blowupAJ} to $\mathcal{E}$ is identified with \eqref{diagram_incidence}.   
%\end{lemma}

%One can refer to \cite[Chapter 5]{Huy} for a detailed discussion on the related topics about cubic threefolds.

We define $\Phi$ to be the rational map
\begin{equation}\label{Eqn_FFGauss}
    \Phi:F\times F \dashrightarrow (\mathbb P^4)^*,\ (L_1,L_2)\mapsto \textup{Span}(L_1,L_2).
\end{equation}

According to \cite[13.6]{CG}, it factors through the \textit{Gauss map}
\begin{equation}\label{Eqn_GaussMap}
   \mathcal{G}:\Theta\dashrightarrow (\mathbb P^4)^*,\: p\mapsto T_p\Theta 
\end{equation}

%On the other hand, as a consequence of Lemma \ref{Lemma_BlTheta}, the Gauss map \eqref{Eqn_GaussMap} extends to a morphism 
%\begin{equation}\label{Eqn_LiftGaussMap}
%   \tilde{\mathcal{G}}:\Bl_0(\Theta)\to (\mathbb P^4)^*,
%\end{equation}
% whose restriction to the exceptional divisor is the dual map $X\to (\mathbb P^4)^*$, $x\mapsto T_xX$. 

\begin{lemma}\label{Lemma_Compactification_Tv_Fiber}
The factorization extends to a commutative diagram of morphisms
\begin{figure}[ht]
    \centering
\begin{equation}\label{AJblowupDiagram}
\begin{tikzcd}
\Bl_{\Delta_F}(F\times F) \arrow[r,"\tilde{\Psi}"]\arrow[dr,"\tilde{\Phi}"] &  \textup{Bl}_0(\Theta) \arrow[d,"\tilde{\mathcal{G}}"]\\
& (\mathbb P^4)^*.
\end{tikzcd}
\end{equation}
\end{figure}

 $\tilde{\Psi}$ extends the Abel-Jacobi map \eqref{diagram_blowupAJ}. $\tilde{\mathcal{G}}$ extends the Gauss map, whose restriction to the exceptional divisor is the dual map $X\to (\mathbb P^4)^*$, $x\mapsto T_xX$.
\end{lemma}

\begin{proposition}\label{Prop_Phitilde-fiber}
   $\tilde{\Phi}$ is generically finite. $\tilde{\Phi}^{-1}(H)$ has positive dimension if and only if $H=T_pX$ is a tangent hyperplane at an Eckardt point $p$, and $\tilde{\Phi}^{-1}(H)$ is isomorphic to $E\times E$, where $E$ is the elliptic curve associated to $p$.
\end{proposition}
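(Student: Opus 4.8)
The plan is to analyze $\tilde\Phi$ through its factorization $\tilde\Phi = \tilde{\mathcal G}\circ\tilde\Psi$ given in Lemma \ref{Lemma_Compactification_Tv_Fiber}, reducing the fiber computation to understanding fibers of the extended Gauss map $\tilde{\mathcal G}$ composed with fibers of $\tilde\Psi$. First I would establish generic finiteness: since $\Psi$ is generically $6$-to-$1$ onto the theta divisor $\Theta$ (a $4$-fold) and $\mathcal G:\Theta\dashrightarrow(\mathbb P^4)^*$ is dominant between $4$-folds, hence generically finite, the composition $\Phi$ is generically finite, and this property is preserved under the birational modifications to $\tilde\Phi$. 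The geometric content is that a general hyperplane $H$ determines a smooth cubic surface $X_H=X\cap H$ carrying only finitely many pairs of skew lines whose span is $H$ — indeed by Proposition \ref{VC=[L]-[M]Prop} each of the $72$ roots is a difference of skew lines in exactly $6$ ways, matching the $72/6$ or $6\cdot 12$ bookkeeping consistent with generic finiteness.

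Next I would characterize when the fiber jumps in dimension. The key is that $\tilde\Phi^{-1}(H)$ is positive-dimensional precisely when either $\tilde{\mathcal G}^{-1}(H)$ is positive-dimensional, or $\tilde\Psi$ has positive-dimensional fibers over $\tilde{\mathcal G}^{-1}(H)$. Using \eqref{Eqn_LiftGaussMap}, the restriction of $\tilde{\mathcal G}$ to the exceptional divisor $X\subseteq\Bl_0(\Theta)$ is the dual map $x\mapsto T_xX$; its fiber over $H=T_pX$ consists of points $x$ with $T_xX=T_pX$. For $H$ tangent at an Eckardt point, the tangent hyperplane section $T_pX\cap X$ is a cone over an elliptic curve (Proposition \ref{Prop_Eckardt}), so the locus of $x$ with $T_xX=H$ is exactly the vertex-to-elliptic-curve structure, producing a positive-dimensional fiber. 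Conversely, away from tangent hyperplanes at Eckardt points, the dual map is finite and $\Theta\setminus\{0\}\to(\mathbb P^4)^*$ has finite fibers, forcing $\tilde\Phi^{-1}(H)$ to be finite.

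To pin down the fiber as $E\times E$ when $H=T_pX$, I would argue directly on $F\times F$: a pair $(L_1,L_2)$ lies in $\tilde\Phi^{-1}(T_pX)$ iff $\mathrm{Span}(L_1,L_2)=T_pX$, i.e.\ both lines lie in the tangent hyperplane $T_pX$. The lines contained in $T_pX\cap X$ — the Eckardt cone — are precisely the cone lines joining $p$ to the plane cubic curve, and by Proposition \ref{Prop_Eckardt}(2) these form the elliptic curve $E\subseteq F$ associated to $p$. Since any two such cone lines automatically span $T_pX$ (they all pass through the vertex $p$ and fill out the hyperplane), the fiber is the full product $E\times E$, with the necessary care on the boundary divisor of $\Bl_{\Delta_F}(F\times F)$ handled via Lemma \ref{lemma_I}, which identifies $\mathcal E\cong I$ and $\tilde\Psi|_{\mathcal E}$ with $p_2$.

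The main obstacle I anticipate is the last identification of the fiber with $E\times E$ rather than merely a surface dominated by it: one must verify that $\tilde\Phi$ does not further identify pairs of cone lines, and that the blow-up along $\Delta_F$ does not alter the set-theoretic fiber — equivalently, that the diagonal $\Delta_E\subseteq E\times E$ maps into the exceptional data compatibly via Lemma \ref{lemma_I}. Controlling this boundary behavior, and confirming that no extra components of $\tilde\Phi^{-1}(T_pX)$ arise from pairs of lines that span $T_pX$ without both lying on the cone, is the delicate point; the rest is a dimension count feeding off the factorization through $\tilde{\mathcal G}$.
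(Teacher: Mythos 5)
Your generic-finiteness argument is fine, but both of the key steps in your fiber analysis rest on false geometric claims. First, in your second paragraph you locate the positive-dimensional fibers of $\tilde{\mathcal{G}}$ inside the exceptional divisor of $\Bl_0(\Theta)$ via the dual map, asserting that for an Eckardt point $p$ the locus $\{x \in X : T_xX = T_pX\}$ is positive-dimensional. This is wrong: the dual map of a smooth hypersurface is finite (it is given by the partial derivatives of the defining cubic, a base-point-free subsystem of the ample $\mathcal{O}_X(2)$, so it contracts no curve), and concretely its fiber over $H$ is $\mathrm{Sing}(X \cap H)$, which for the Eckardt cone is just the vertex $\{p\}$. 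The actual positive-dimensional fibers of $\tilde{\mathcal{G}}$ lie in the \emph{strict transform} of $\Theta$ — they are the elliptic curves $\Psi(E\times E) \cong E \subseteq \Theta$ — and establishing this is exactly Proposition \ref{Prop_ExtendGauss}, which the paper \emph{deduces from} the present proposition. Likewise your converse ("away from Eckardt hyperplanes, $\Theta\setminus\{0\} \to (\mathbb P^4)^*$ has finite fibers") is asserted without any argument and is essentially equivalent to the statement being proved. So the strategy of factoring through $\tilde{\mathcal{G}}$ is circular as set up, and the input you feed it is incorrect.

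Second, your identification of the fiber over $T_pX$ with $E\times E$ uses the claim that "any two such cone lines automatically span $T_pX$." This is false: two distinct cone lines meet at the vertex $p$, so they span only a $2$-plane. In fact $\tilde{\Phi}^{-1}(T_pX)$ contains \emph{no} pairs of skew lines at all; it consists entirely of incident pairs, and the reason these map to $T_pX$ is not their span but the extension formula for $\tilde{\Phi}$ across the incidence locus: by \cite[Lemma 12.16]{CG} (equivalently, by the Hilbert-scheme description of Proposition \ref{Prop_HS-to-P^4}, where a type (III) scheme with embedded point at $q = L_1 \cap L_2$ lies in the unique hyperplane $T_qX$), an incident pair $(L_1,L_2)$ with $L_1 \cap L_2 = \{q\}$ is sent to $T_qX$. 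This is precisely the mechanism the paper's proof runs on: decompose $\Bl_{\Delta_F}(F\times F)$ into $F\times F \setminus \Delta_F$ and $\mathcal{E}$; on the first piece the fiber over $T_pX$ is the set of incident pairs of cone lines, i.e. $E\times E\setminus \Delta_E$; on $\mathcal{E}$, via Lemma \ref{lemma_I}, the fiber is the curve $E$ of lines through $p$; and finiteness over all other hyperplanes follows from the classification of normal cubic surfaces (Lemma \ref{classicication of cubic surfaces}), since only cones over elliptic curves carry infinitely many lines. You correctly flagged the delicate points (no skew pairs sneaking in, and the boundary contribution over $\Delta_E$), but the arguments you offer for both are the ones that fail.
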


This can be deduced from an argument using \cite[Lemma 12.16]{CG}, but here we would like to provide an interpretation using the Hilbert scheme.

\subsection{Hilbert Scheme of a Pair of Skew Lines} \label{Sec_HS}The product $F\times F$ may be regarded as the \textit{Chow variety} of ordered pairs of lines on $X$: A general point parameterizes two skew lines, a codimension one point parameterizes pairs of incidental lines, and a codimension two set (diagonal) parameterizes double lines. However, the family parameterized by $F\times F$ is not flat: The Hilbert polynomial of $L_1\cup L_2$ is $2n+2$ when the two lines are distinct, and $2n+1$ when they intersect at a point. When $L_1=L_2$, it becomes a double line, and for different double structures, the Hilbert polynomials can be different.

To obtain a flat family, one has to resort to the notion of the Hilbert scheme.

\begin{definition}\normalfont 
    Let $H(X)$ be the irreducible component of the Hilbert scheme of $X$ containing a pair of skew lines. Call $H(X)$ the \textit{Hilbert scheme of a pair of skew lines} of $X$.
\end{definition}

\begin{theorem} \cite[Theorem 4.1]{YZ_SkewLines} \label{Thm_SkewLines}
    $H(X)$ is smooth and isomorphic to blow-up $\Bl_{\Delta_F}\Sym^2F$ of symmetric square of $F$ on the diagonal.
\end{theorem}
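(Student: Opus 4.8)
The plan is to identify both sides with the Hilbert scheme of two points on the Fano surface $F$. Recall that for the smooth surface $F$ the blow-up $\Bl_{\Delta_F}\Sym^2 F$ is canonically the Hilbert scheme of two points $F^{[2]}=\textup{Hilb}^2(F)$ (Fogarty), which is smooth and projective of dimension $4$; so it suffices to produce an isomorphism $F^{[2]}\cong H(X)$. The geometric input is the universal line: the incidence variety $I$ of \eqref{diagram_incidence} is a $\mathbb P^1$-bundle $p_1:I\to F$ equipped with the generically finite projection $p_2:I\to X$.

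To build the morphism, let $\Xi\subseteq F^{[2]}\times F$ be the universal length-$2$ subscheme and form the fibre product $\Xi\times_F I$, a family over $F^{[2]}$ whose fibre over $\xi$ is the ruled scheme $I_\xi=p_1^{-1}(\xi)$. Pushing forward along $p_2$, I define $\mathcal Z\subseteq F^{[2]}\times X$ to be the scheme-theoretic image of $\Xi\times_F I$ under $\textup{id}\times p_2$. I will show $\mathcal Z$ is flat over $F^{[2]}$ with Hilbert polynomial $2n+2$, hence defines a classifying morphism $\beta:F^{[2]}\to H(X)$. Over the locus where $\xi$ consists of two distinct points whose lines are skew this is immediate, since $p_2$ embeds $I_\xi$ as a disjoint pair of lines; the content is the verification of flatness and of the fibre structure along the diagonal and the incidence locus.

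Next I verify that $\beta$ is bijective by matching the four strata (I)--(IV). A reduced $\xi$ with skew lines gives type (I); a reduced $\xi$ whose two lines meet gives type (III), the embedded point being forced at the common point by the collision of the two rulings under $p_2$; a nonreduced $\xi$ supported at $t$ records a direction $v\in\mathbb P(H^0(N_{L_t/X}))=\mathbb P(T_tF)$, and whether the resulting double line lies on a quadric (type (II)) or on a plane with an embedded point (type (IV)) is dictated by the splitting type of $N_{L_t/X}$, namely nowhere-vanishing sections for a general line versus sections vanishing at a point for a special line. Since the exceptional divisor of the blow-up is $\mathbb P(T_F)$, this matches exactly the diagonal fibre of $F^{[2]}$, so $\beta$ is bijective.

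Finally I conclude that $\beta$ is an isomorphism by comparing tangent spaces: $d\beta_\xi:T_\xi F^{[2]}\to H^0(Z_\xi,N_{Z_\xi/X})=T_{[Z_\xi]}\textup{Hilb}(X)$. For each of the four types I compute $h^0(N_{Z_\xi/X})=4$ and $h^1(N_{Z_\xi/X})=0$, which simultaneously shows $H(X)$ is smooth of dimension $4$ and that $d\beta$ is an isomorphism; a bijective morphism of smooth projective varieties of equal dimension that is an isomorphism on tangent spaces is an isomorphism. The main obstacle is precisely the analysis along the boundary strata: proving flatness of $\mathcal Z$ where $p_2$ ceases to be injective, and computing the normal-bundle cohomology of the non-reduced, non-lci subschemes of types (III) and (IV), where $N_{Z/X}$ must be treated through the ideal sheaf via $\Hom(I_Z,\mathcal O_Z)$ rather than as a locally free sheaf.
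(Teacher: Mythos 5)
First, a caveat about the comparison itself: this paper does not prove Theorem \ref{Thm_SkewLines} --- it imports it verbatim from \cite[Theorem 4.1]{YZ_SkewLines} --- so your attempt can only be measured against what a complete proof would require, not against an in-paper argument.

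Your architecture is sound and is broadly the shape any proof must take: Fogarty's identification $F^{[2]}\cong\Bl_{\Delta_F}\Sym^2 F$, a classifying morphism built from the universal length-two subscheme $\Xi$ and the incidence variety $I$, stratum-by-stratum matching with the four types (I)--(IV), and a tangent-space comparison to upgrade a bijection to an isomorphism. The stratum dictionary you give (skew pairs $\to$ (I), incident pairs $\to$ (III), nonreduced $\xi$ $\to$ (II) or (IV) according to the splitting type of $N_{L/X}$, using $T_tF\cong H^0(N_{L_t/X})$) is correct. But there are genuine gaps, and they sit exactly at the two places you defer. (a) Flatness of $\mathcal{Z}$: the scheme-theoretic image of $\Xi\times_F I$ is just the reduced closure of the family of skew pairs, and its scheme-theoretic fibers over the incidence divisor and over the exceptional locus are not automatically the type (III)/(II)/(IV) schemes. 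Over a one-parameter base the appearance of the embedded point in the limit is the classical computation, but $F^{[2]}$ is four-dimensional and the boundary strata are divisors, so the fiber of the closure is a priori only squeezed between the reduced union (Hilbert polynomial $2n+1$, which would kill flatness) and something possibly larger than any one-parameter flat limit; one must actually compute that it equals the unique type (III) (resp.\ (II)/(IV)) structure inside $X$, so that the Hilbert polynomial is constantly $2n+2$ and \cite[Theorem 9.9, Chapter III]{Hartshorne} applies to give flatness and hence the classifying map $\beta$. You name this obstacle but do not resolve it. (b) The computation $\dim\Hom(I_Z,\mathcal{O}_Z)=4$ for types (II)--(IV): this is the entire deformation-theoretic content of the smoothness of $H(X)$, it concerns non-reduced and (for (III), (IV)) non-lci subschemes, and it is asserted, not performed. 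Incidentally, $h^1=0$ is not needed: since $H(X)$ is by definition an irreducible component of dimension $4$, the equality $h^0=4$ alone gives smoothness, after which bijectivity plus injectivity of $d\beta$ finishes. In short, your proposal is a correct, well-organized outline, but the two computations you postpone \emph{are} the theorem; without them it is not a proof.
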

 Consequently, $\Bl_{\Delta_F}(F\times F)$, as a branched double cover of $H(X)$, parameterizes pairs of skew lines as well as their flat degenerations with an order. So the blow-up map $\sigma$ is the Hilbert-Chow morphism up to a double cover.

$H(X)$  parameterizes four types of subschemes with constant Hilbert polynomial $2n+2$:

\begin{itemize}
    \item[(I)] A pair of skew lines;
    \item[(II)] A line with a double structure remembering the normal direction to a quadric surface;
    \item[(III)] A pair of incident lines with an embedded point at the intersection;
    \item[(IV)] A line with a double structure remembering the normal direction to a plane, together with an embedded point on the line.
\end{itemize}

\begin{figure}[h]
     \centering 
     \begin{subfigure}[b]{0.2\textwidth}
         \centering
         \includegraphics[width=\textwidth]{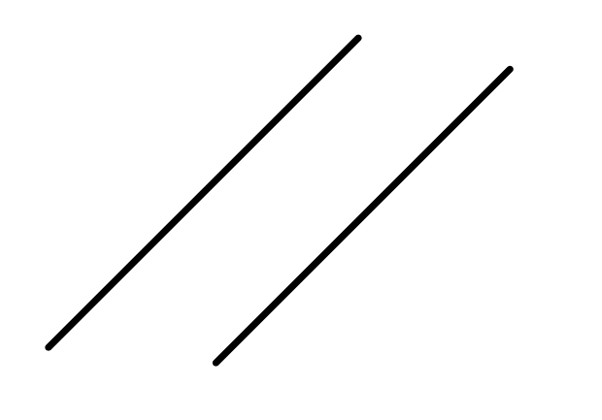}
         \caption*{Type (I)}
         \label{fig:Type (I)}
     \end{subfigure}
     \hspace{6em}
     \begin{subfigure}[b]{0.17\textwidth}
         \centering
         \includegraphics[width=\textwidth]{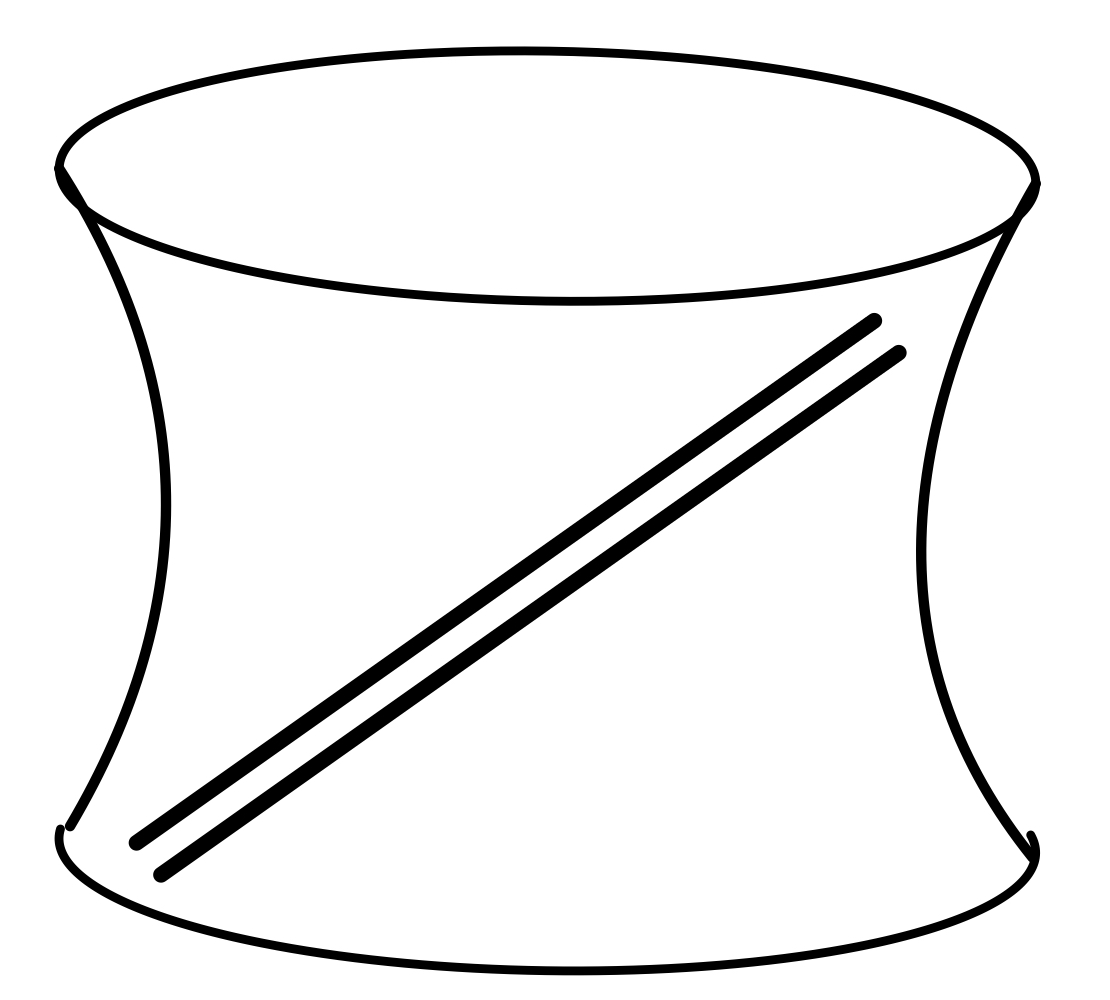}
         \caption*{Type (II)}
         \label{fig:Type (II)}
     \end{subfigure}\\
   \begin{subfigure}[b]{0.2\textwidth}
         \centering
         \includegraphics[width=\textwidth]{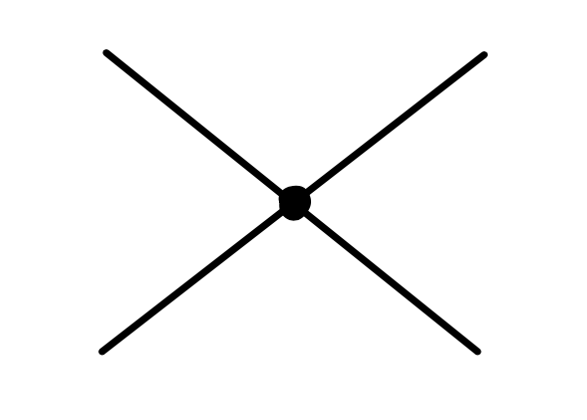}
         \caption*{Type (III)}
         \label{fig:Type (III)}
     \end{subfigure}
     \hspace{6em}
     \begin{subfigure}[b]{0.2\textwidth}
         \centering
         \includegraphics[width=\textwidth]{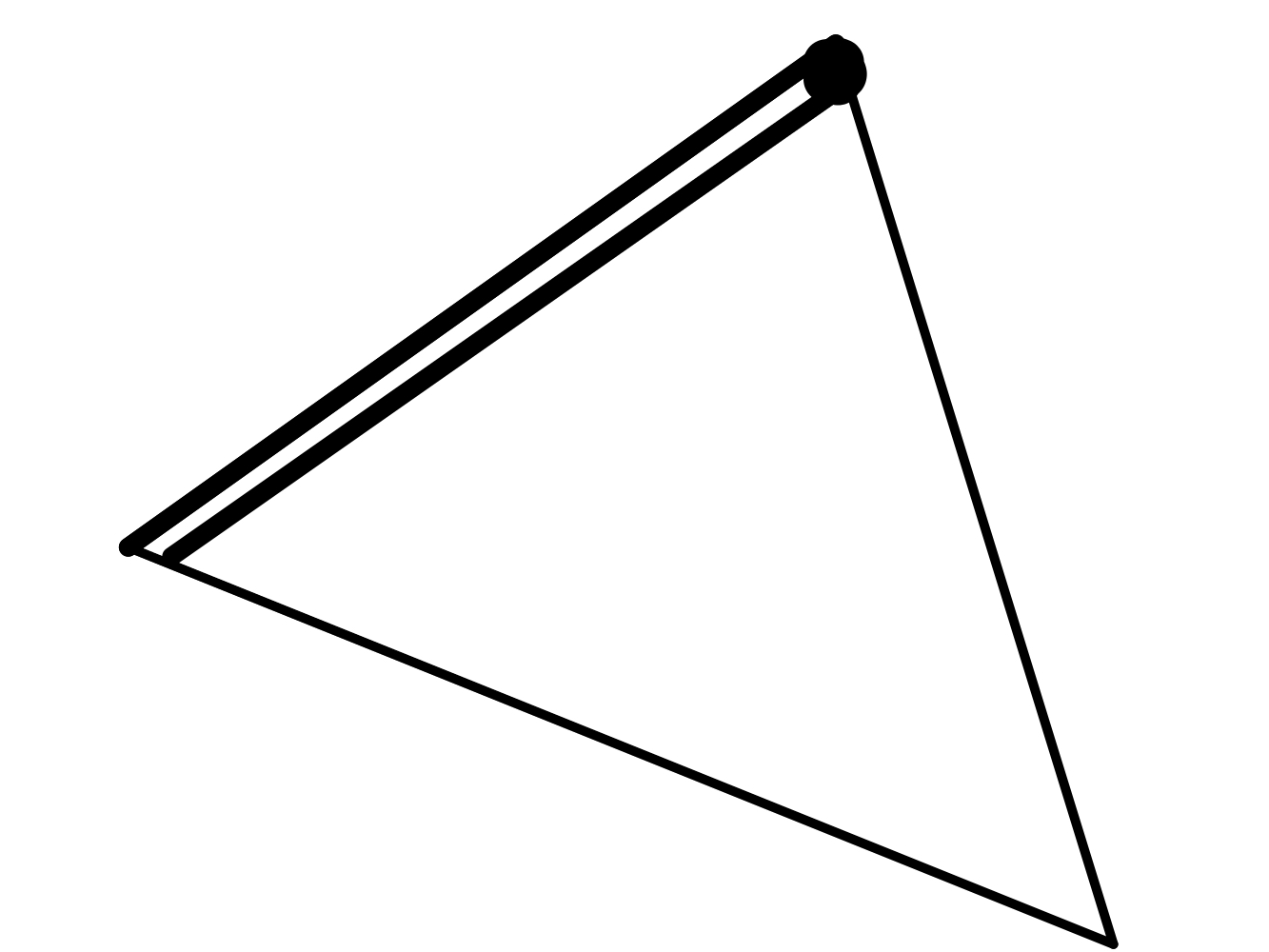}
         \caption*{Type (IV)}
         \label{fig:Type (IV)}
     \end{subfigure}
        \caption{Schemes of the Four Types}
        \label{fig:Type}
\end{figure}

 As a consequence of Theorem \ref{Thm_SkewLines}, the Hilbert-Chow morphism $\sigma$ is isomorphic on the type (I) and (III) locus, and is $\mathbb P^1$-to-1 on the locus of type (II) and (IV) schemes.

In fact, when $L_1\cap L_2=\{p\}$, there is a unique type (III) subscheme of $X$ supported on $L_1\cup L_2$: the embedded point is contained in the tangent hyperplane $T_pX$. The type (II)/(IV) subscheme, on the other hand, puts different double structures on a single line and determines the normal bundle $N_{L|X}$: 
$$N_{L|X}\cong 
\begin{cases}
    \mathcal{O}\oplus \mathcal{O},\ L\ \textup{supports a type (II) subscheme};\\
    \mathcal{O}(-1)\oplus \mathcal{O}(1),\ L\ \textup{supports a type (IV) subscheme}.
\end{cases}
$$

Just as a pair of skew lines span a hyperplane, each scheme of type (II)-(IV) is contained in a unique hyperplane. So these nonreduced schemes as flat limits of type (I) schemes, generalize the notion of "a pair of skew lines". This provides a modular interpretation of the morphism (cf. Lemma \ref{Lemma_Compactification_Tv_Fiber})
\begin{proposition} \label{Prop_HS-to-P^4}\cite[Corollary 4.2]{YZ_SkewLines}
There is a morphism
    $$\tilde{\Phi}:\Bl_{\Delta_F}(F\times F)\to (\mathbb P^4)^*$$
    that sends each subscheme $Z\in H(X)$ with an order to the unique hyperplane that contains $Z$.
\end{proposition}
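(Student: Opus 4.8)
The plan is to reconstruct $\tilde\Phi$ intrinsically from the universal family over $H(X)$, so that the existence of the morphism and the uniqueness of the containing hyperplane are both extracted from a single cohomological fact. Let $\mathcal Z\subseteq H(X)\times\mathbb P^4$ be the universal subscheme, with projections $\pi:H(X)\times\mathbb P^4\to H(X)$ and $\rho:H(X)\times\mathbb P^4\to\mathbb P^4$, and set $\mathcal I_{\mathcal Z}(1):=\mathcal I_{\mathcal Z}\otimes\rho^*\mathcal O_{\mathbb P^4}(1)$. The key assertion is that
$$\mathcal F:=\pi_*\mathcal I_{\mathcal Z}(1)$$
is a line bundle whose fiber at $[Z]$ is $H^0(\mathbb P^4,\mathcal I_Z(1))$, the space of linear forms vanishing on $Z$. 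Granting this, the inclusion $\mathcal F\hookrightarrow\pi_*\rho^*\mathcal O_{\mathbb P^4}(1)=H^0(\mathcal O_{\mathbb P^4}(1))\otimes\mathcal O_{H(X)}$ is that of a sub-line-bundle and classifies a morphism $H(X)\to\mathbb P\bigl(H^0(\mathcal O_{\mathbb P^4}(1))\bigr)=(\mathbb P^4)^*$ sending $[Z]$ to the unique hyperplane it spans. Pulling back along the branched double cover $\Bl_{\Delta_F}(F\times F)\to H(X)$ of Theorem \ref{Thm_SkewLines} produces $\tilde\Phi$; that it coincides with the morphism of Lemma \ref{Lemma_Compactification_Tv_Fiber} is automatic, since both equal $\textup{Span}(L_1,L_2)$ on the dense type (I) locus and $(\mathbb P^4)^*$ is separated.

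By Grauert's theorem (the base $H(X)$ is smooth, hence reduced), this reduces to the single numerical claim
$$h^0(\mathbb P^4,\mathcal I_Z(1))=1\qquad\text{for every }Z\in H(X),$$
equivalently that each $Z$ has linear span exactly a $\mathbb P^3$. The inequality $h^0\ge1$ is free: the incidence locus $\{(b,x):x\in\tilde\Phi(b)\}$ is closed and contains $\mathcal Z$ over the dense type (I) stratum, where $L_1\cup L_2\subseteq\textup{Span}(L_1,L_2)$, so $Z\subseteq\tilde\Phi(Z)$ for every $Z$ and a containing hyperplane always exists. The real content is the reverse inequality $h^0\le1$, i.e. that no $Z$ collapses into a plane.

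I would first dispose of the reduced strata. Type (I) skew lines span $\mathbb P^3$ tautologically. For type (III) the incident lines span only the plane $P=\textup{Span}(L_1,L_2)$, but by the description in Section \ref{Sec_HS} the attached embedded point at the node $p$ records a tangent direction in $T_pX$ transverse to $P$, so $Z$ spans all of $T_pX\cong\mathbb P^3$ and $\tilde\Phi(Z)=T_pX$, in agreement with Proposition \ref{Prop_Phitilde-fiber}. For the nonreduced types (II) and (IV) I would argue by contradiction: if $Z\subseteq\mathbb P^2$ then $Z\subseteq\mathbb P^2\cap X$, a plane cubic of degree $3$, forcing its supporting line $L$ to occur with multiplicity two inside a reducible cubic $2L+L'$; I would then show that such a planar double line is incompatible with the splitting $N_{L|X}\cong\mathcal O\oplus\mathcal O$ (type II), respectively $N_{L|X}\cong\mathcal O(-1)\oplus\mathcal O(1)$ together with the embedded point (type IV), that defines the double structure, so that the ribbon is necessarily non-degenerate.

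The main obstacle is exactly this last verification for the ribbon (double-line) schemes. One must translate the abstract double structure --- recorded by the $H(X)$-point as a sub-line-bundle of $N_{L|\mathbb P^4}$ --- into an explicit homogeneous ideal and read off that its degree-one part is one-dimensional, ruling out the accidental planar cubic $2L+L'$. Here the splitting type does the work: in type (IV) the $\mathcal O(1)$ summand (and the extra embedded point) obstructs a second independent linear relation, while in type (II) the balanced structure inherited from the limit of two skew lines keeps the ribbon spread across a full $\mathbb P^3$. Once $h^0\equiv1$ is confirmed on all four strata, $\mathcal F$ is a saturated line subbundle, completing both the construction of $\tilde\Phi$ and its modular description.
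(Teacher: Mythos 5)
Your architecture (universal ideal sheaf, Grauert, sub-line-bundle of $H^0(\mathcal O_{\mathbb P^4}(1))\otimes\mathcal O_{H(X)}$ classifying a map to $(\mathbb P^4)^*$) is sound, and it is genuinely different from how the statement arises in the paper: there, $\tilde\Phi$ already exists as the composite $\tilde{\mathcal G}\circ\tilde\Psi$ of Beauville's lifted Abel--Jacobi map with the extended Gauss map (Lemma \ref{Lemma_Compactification_Tv_Fiber}), and the modular description is imported wholesale from the classification of flat limits in \cite[Corollary 4.2]{YZ_SkewLines}; no proof is given in this paper. Your reduction of everything to the single statement $h^0(\mathbb P^4,\mathcal I_Z(1))=1$ for all $Z\in H(X)$ is correct, the lower bound $h^0\ge 1$ by taking closures over the type (I) stratum works, and the type (II) case is in fact easy to close: a type (II) ribbon has conormal quotient $\mathcal I_L/\mathcal I_Z\cong\mathcal O_L$, and the induced map $H^0(\mathcal I_L(1))\cong\C^3\to H^0(\mathcal O_L(1))\cong\C^2$ is surjective because the defining surjection $N^*_{L|\mathbb P^4}(1)\cong\mathcal O_L^{3}\to\mathcal O_L(1)$ has no common zero, so its kernel is exactly one-dimensional.

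The genuine gap is in types (III) and (IV), and you have located but not filled it: everything hinges on the embedded point being transverse to the plane $P$ spanned by the support, and this cannot be "read off from the splitting type," as you suggest. The paper only says the type (III) embedded point lies in $T_pX$, and since $P\subseteq T_pX$ this does not by itself exclude a direction inside $P$. Worse, for type (IV) the planar competitor really exists inside $X$: if $L$ is a line of the second type, the plane $P_L$ tangent to $X$ along $L$ (cf. \cite[Lemma 6.7]{CG}) satisfies $P_L\cap X=2L+L'$, and the planar double line with an embedded point at a point of $L$ inside this curve is a bona fide subscheme of $X$ with Hilbert polynomial $2n+2$ and $h^0(\mathcal I_Z(1))=2$. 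So your claim "$h^0\le 1$" is exactly the assertion that no such planar scheme lies in the irreducible component $H(X)$, i.e., that it is not a flat limit of pairs of skew lines; this is a statement about which nonreduced schemes the component actually contains, and proving it requires the explicit flat-limit computation (as in the local model $(x,z)\cap(y,z-t)\rightsquigarrow(xy,xz,yz,z^2)$, whose embedded point is automatically transverse to the plane of the two lines) or the full classification carried out in \cite{YZ_SkewLines}. As written, your proof defers precisely this step ("I would then show\ldots"), so it reduces the proposition to its hard content without establishing it.
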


In particular,  it provides an alternative proof of Proposition \ref{Prop_Phitilde-fiber}: 

\noindent\textit{Proof of Proposition \ref{Prop_Phitilde-fiber}.}
The fiber $\tilde{\Phi}^{-1}(H)$ is identified with schemes $Z\in H(X)$ contained in the cubic surface $X_H=X\cap H$. Since each pair of distinct lines supports at most one subscheme of $X_H$ of type (I) or (III), and a double line supports at most two subscheme of $X_H$ of type (II) or (IV), the fiber $\tilde{\Phi}^{-1}(H)$ is finite if $X_H$ has finitely many lines. So by the classification theorem of normal cubic surfaces (cf. Lemma \ref{classicication of cubic surfaces}), $\tilde{\Phi}^{-1}(H)$ is positive dimensional only when $H\cap X$ is a cone over an elliptic curve. In this case,  $X_H$ is a cone over elliptic curve $E$ and each pair of lines supports a unique type (III)/(IV) scheme with the embedded points supported on the cone point. So $\tilde{\Phi}^{-1}(H)$ is isomorphic to $E\times E$.\qed\\

%There is also a modular interpretation of Lemma \ref{lemma_I}.

\begin{lemma}(\cite[Proposition 4.3]{YZ_SkewLines})\label{lemma_Psi_HS}
    The restriction of $\tilde{\Psi}$ to the exceptional divisor 
    $\mathcal{E}$ of \eqref{diagram_blowupAJ} has the following modular interpretation: The map
    $$\tilde{\Psi}|_{\mathcal{E}}:\mathcal{E}\to X$$
    sends a type (II) scheme $Z\in H(X)$ to the unique point $x$ on the support line of $Z$ such that the spanning hyperplane $\tilde{\Phi}(Z)$ of $Z$ is tangent to $X$ at $x$, while it sends a type (IV) scheme $Z_p$ whose embedded point supported at $p$ to the point $\bar{p}$ on the support line such that $T_pX=T_{\bar{p}}X$.
\end{lemma}

Since $\tilde{\Psi}:\Bl_{\Delta_F}(F\times F)\to \Bl_0(\Theta)$ commutes with restriction to hyperplane sections, according to the commutativity of the diagram \eqref{AJblowupDiagram}, we also have the following interpretation of $\Bl_0(\Theta)$. (One also compares with \cite[Proposition 6.2]{YZ_SkewLines}.)

\begin{proposition}\label{Prop_Homological}
    $\Bl_0(\Theta)$ parameterizes equivalent classes of subscheme of $X$ of type (I)-(IV) with an order. 
    \begin{itemize}
        \item[(i)] For type (I) and type (III) schemes with an order, the relation is homological, i.e., the difference $[L_1]-[L_2]$ of two lines is equivalent to another iff they represent the same class in the corresponding (singular) cubic surface.
        \item[(ii)] For type (II) and type (IV) schemes, the equivalence relation is incidental: such a scheme uniquely determines a point on their support line, two schemes $Z_1\sim Z_2$ if and only if the corresponding incidental points are the same (cf. Lemma \ref{lemma_Psi_HS}).
    \end{itemize}   
\end{proposition}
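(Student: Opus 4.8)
The plan is to read the two equivalence relations directly off the fibres of the extended Abel-Jacobi morphism $\tilde\Psi\colon\Bl_{\Delta_F}(F\times F)\to\Bl_0(\Theta)$. By Theorem \ref{Thm_SkewLines} the source is a branched double cover of $H(X)$, so its points are precisely the ordered subschemes of type (I)-(IV), and $\tilde\Psi$ is surjective; hence $\Bl_0(\Theta)$ is, set-theoretically, the quotient of the space of ordered type (I)-(IV) schemes by the relation ``lying in a common fibre of $\tilde\Psi$,'' and the entire content of the proposition is to match these fibres with the relations (i) and (ii). The first reduction is global: by the commutativity of \eqref{AJblowupDiagram}, namely $\tilde\Phi=\tilde{\mathcal G}\circ\tilde\Psi$, two schemes with equal $\tilde\Psi$-image have equal $\tilde\Phi$-image, hence span the same hyperplane $H$ and lie on the same cubic surface $X_H=X\cap H$. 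It therefore suffices to analyse $\tilde\Psi$ fibrewise over $(\mathbb P^4)^*$, separately on the two strata of $\Bl_{\Delta_F}(F\times F)$: the complement $F\times F\setminus\Delta_F$ of the exceptional divisor, carrying types (I) and (III), and the exceptional divisor $\mathcal E$, carrying types (II) and (IV).

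On the complement $\tilde\Psi$ agrees with $\Psi$, so two pairs $(L_1,L_2)$ and $(L_1',L_2')$ spanning $H$ are equivalent precisely when $[L_1]-[L_2]=[L_1']-[L_2']$ in $J(X)$. When $X_H$ is smooth (type (I)), the Abel-Jacobi value is determined by the root $\alpha=[L_1]-[L_2]\in R_{X_H}$, and the fibre is exactly the set of six presentations $\alpha=[G_i]-[E_i]$ supplied by Proposition \ref{VC=[L]-[M]Prop}; this matches the generic degree $6$ of $\Psi$ and yields the homological relation of (i). When $H=T_pX$ is tangent (type (III)), $X_H$ is nodal at $p$, and I would transport the same homological description across the tangent locus using the specialization of Example \ref{Exa_CubicNode}: the primitive vanishing cycle degenerates to an effective $(-2)$-curve on the minimal resolution $\tilde X_H$, so that the Abel-Jacobi value of $(L_1,L_2)$ continues to depend only on the class of the $1$-cycle $L_1-L_2$ in the singular surface $X_H$.

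On the exceptional divisor I would invoke Lemma \ref{lemma_I} and Lemma \ref{lemma_Psi_HS}: the restriction $\tilde\Psi|_{\mathcal E}$ is identified with the projection $p_2\colon I\to X$ onto the exceptional divisor $X\subseteq\Bl_0(\Theta)$. By Lemma \ref{lemma_Psi_HS} a type (II) scheme determines the point of tangency of its spanning hyperplane on the support line, while a type (IV) scheme whose embedded point lies over $p$ determines the point $\bar p$ with $T_{\bar p}X=T_pX$; in both cases this canonical incidental point is precisely the image under $p_2$. Hence two such schemes share a fibre of $\tilde\Psi$ if and only if their incidental points coincide, which is exactly the incidental relation of (ii).

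The main obstacle is the type (III) stratum, where $X_H$ is singular and one must give precise meaning to ``the same class in the singular cubic surface'' and identify it with the fibre of $\tilde\Psi$. I expect the cleanest route is to combine the node resolution of Example \ref{Exa_CubicNode} with the continuity of $\tilde\Psi$, so that the homological relation established on the open type (I) stratum specializes to its closure, the effective $(-2)$-curve picture on $\tilde X_H$ identifying the limiting class. A secondary check is that the homological description on $F\times F\setminus\Delta_F$ and the incidental description on $\mathcal E$ are compatible along the boundary between the strata; this is automatic, since both arise as descriptions of fibres of the single morphism $\tilde\Psi$.
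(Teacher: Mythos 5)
Your skeleton coincides with the paper's own proof: the reduction to a fixed hyperplane section via $\tilde\Phi=\tilde{\mathcal G}\circ\tilde\Psi$, the disposal of part (ii) through Lemmas \ref{lemma_I} and \ref{lemma_Psi_HS} (the paper simply says ``it suffices to show (i)''), and the six-presentations count of Proposition \ref{VC=[L]-[M]Prop} for type (I). The genuine gap is the step you yourself call the ``main obstacle'': the type (III) stratum is deferred to an expectation, and the route you sketch would not close it. First, a type (III) scheme does not in general live on a surface that ``is nodal at $p$'': by Lemma \ref{classicication of cubic surfaces} the tangent hyperplane section can have an arbitrary ADE singularity at the incidence point (and further singularities elsewhere), and on an Eckardt cone every pair of lines is incident at the cone point, which is an elliptic singularity. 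Example \ref{Exa_CubicNode} covers only the $A_1$ case, so ``node resolution plus continuity'' does not reach the locus you must handle. Second, your sketch never says what ``the same class in the singular cubic surface'' means, and supplying that meaning is precisely the technical content of the paper's proof: it runs the exact sequence of the minimal resolution $\tilde X_H\to X_H$ with exceptional divisor $D$,
\[
H^1(\tilde X_H)\to H^1(D)\to H^2(X_H)\to H^2(\tilde X_H)\to H^2(D),
\]
identifying $H^2(X_H)$ with classes on $\tilde X_H$ modulo the exceptional $(-2)$-curves (ADE case), resp.\ modulo $[D]$ with $D\cong E$ (elliptic case), and then specializes the lines of nearby type (I) pairs to curves $\tilde L_1,\tilde L_2$ on $\tilde X_H$, so that equivalence on $X_H$ is computed on the resolution.

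Beyond these omissions, the mechanism you propose is too weak: continuity of $\tilde\Psi$ yields at most one implication (pairs whose limiting classes agree have the same limit point), while the converse --- that a fibre of $\tilde\Psi$ over a boundary point does not identify pairs with distinct classes --- is the hard direction, and the Eckardt cone shows a purely homological specialization argument gets it wrong. On the cone $X_H$ over $E$ all lines are homologous (they lift to fibres of the ruling on the resolution), so every ordered pair of lines has homologically trivial difference; yet by Proposition \ref{Prop_ExtendGauss} and the factorization $\Psi|_{E\times E}:(p,q)\mapsto p-q$, the fibres of $\tilde\Psi$ over the curve $E\subseteq\Bl_0(\Theta)$ are the translates $\{(p,q)\in E\times E: p-q=e\}$, which are proper subsets of $E\times E$. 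The equivalence actually being parameterized there is rational equivalence (Abel's theorem on $E$), which is strictly finer than homology on the cone; an argument that only propagates homology classes from the type (I) stratum by continuity would wrongly collapse all of $E\times E$ to a single point. This is exactly why the paper's proof proceeds through the minimal resolution with a separate treatment of the ADE and elliptic cases rather than by a single specialization-and-continuity argument.
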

\begin{proof}
   It suffices to show (i). By taking the hyperplane section at $X_H=X\cap H$, we have a cubic surface with either ADE singularities or an elliptic singularity (cf. Lemma \ref{classicication of cubic surfaces}). Let $\tilde{X}_H\to X_H$ be the minimal resolution, and denote $D$ as the exceptional divisor. There is an exact sequence of cohomology
    $$H^1(\tilde{X}_H)\xrightarrow{\alpha} H^1(D)\to H^2(X_H)\to H^2(\tilde{X}_H)\to H^2(D).$$
    
When $X_H$ has ADE singularities, $D$ is a disjoint union of bunches of rational curves, so $H^1(D)=0$. In particular, $H^2(X_H)$ consists of classes of $H^2(\tilde{X}_H)\cong \Z^7$ by setting all the $(-2)$ curves on the exceptional fibers to be zero. When $X_H$ is a cone over an elliptic curve $E$, $\alpha$ is an isomorphism, so $H^2(X_H)$ consists of classes of $H^2(\tilde{X}_H)\cong \Z^2$ by setting the class of exceptional curve $D\cong E$ to be zero. In both cases, the support lines $L_1\cup L_2$ of a type (I) or (III) subscheme of $X_H$ is the limit of pairs of skew lines $L_{1,t}, L_{2,t}$ on a one-parameter family of cubic surfaces $X_{t}$. By base change and desingularization, the lines $L_{1,t}, L_{2,t}$ specialize to (-1) curves $\tilde{L}_1$, $\tilde{L}_2$ on $\tilde{X}_H$, whose image in $X_H$ is $L_1$ and $L_2$. Therefore $[L_1]-[L_2]\sim [L_1']-[L_2']$ on $X_H$ iff $[\tilde{L}_1]-[\tilde{L}_2]\sim [\tilde{L}_1']-[\tilde{L}_2']$ on $\tilde{X}_H$ modulo classes generated by exceptional curves.

%let $Z'$ and $Z''$ be two subschemes of type (I) or (III) of $X_H$ with supporting line $L_1'\cup L_2'$ and $L_1''\cup L_2''$, then $[L_1]\cup L_2'$ and $L_1''\cup L_2''$
\end{proof}

\section{Compactification of the Primitive Vanishing Cycle Component}\label{Section_Tvbar}

In this section, our main goal is to understand Stein's compactification $\bar{\mathcal{T}}_v$ and prove the Theorem \ref{barT'BlowupThm}. We will also study other natural compactifications of $\mathcal{T}_v$ as well as their relationship to various compactifications of $(F\times F)^{\circ}$.

\subsection{Stein's Compactification}\label{Sec_3.Compactification}

%We have seen that when $X$ is a smooth cubic threefold, the locus of vanishing cycles $\mathcal{T}_v$ on smooth hyperplane sections of $X$ is a 72-to-1 covering space to an open subspace of projective space, and parameterizes the root system on the cubic surface $X_t$ (c.f. Proposition \ref{Prop_72to1}). We want to study the topology and geometry of the covering map $\pi_v$ through its completion. 

As a consequence of a lemma (cf. Lemma \ref{analytic_cover_closure}) of Stein, given an analytic finite cover $W\to U$ and a compactification $U\subseteq \bar{U}$ on the base, there is a canonical compactification of the total space $\bar{W}$ and the resulting cover $\bar{W}\to \bar{U}$ is branched. 

Apply the lemma to our 72-to-1 cover \eqref{Eqn_LocusPV_CT} with respect to the compactification $U\subseteq (\mathbb P^4)^*$ on the base, we have

\begin{proposition}\label{Prop_barT'Exist}
There exists a normal algebraic variety $\bar{\mathcal{T}}_v$ together with a finite map 
\begin{equation}\label{Eqn_bar-piv}
\bar{\pi}_v:\bar{\mathcal{T}}_v\to (\mathbb P^4)^*
\end{equation}
which extends the 72-to-1 covering map \eqref{Eqn_LocusPV_CT} as branched analytic covers. Moreover, $\bar{\mathcal{T}}_v$ is unique up to isomorphism.
\end{proposition}

As a consequence of Proposition \ref{Prop_Phitilde-fiber}, we have
\begin{lemma}\label{lemma_ExtendGauss}
The extended Gauss map $\tilde{\mathcal{G}}:\textup{Bl}_0(\Theta)\to (\mathbb P^4)^*$ is generically finite. $\tilde{\mathcal{G}}^{-1}(t)$ has positive dimension if and only if
\begin{itemize}
    \item[(1)] $H_t\cap X$ is a cone over an elliptic curve $E$, and
    \item[(2)] $\tilde{\mathcal{G}}^{-1}(t)$ is isomorphic to $E$.
\end{itemize}
\end{lemma}
\begin{proof}
We identify $E\times E\subseteq F\times F$ with its strict transform in the blow-up. It suffices to show that the image of positive dimensional fiber $\tilde{\Phi}^{-1}(t)\cong E\times E$ under the map $\tilde{\Psi}$ is isomorphic to $E$. 

The restriction of the Abel-Jacobi map \eqref{Intro_eqn_AJ} to $E\times E$ factors through
$$\Psi|_{E\times E}: E\times E\xrightarrow{f} E\xrightarrow{g} Alb(F)\cong J(X),$$
where $f$ is given by $(p,q)\mapsto p-q$. The morphism $g$ is a closed immersion since it factors through $F\to Alb(F)$, which is a closed immersion \cite[Theorem 4]{Beauville2}. The last isomorphism is due to \cite[Theorem 11.19]{CG}.

It follows that $\Psi(E\times E)\cong E$ is an elliptic curve, and by commutativity of \eqref{AJblowupDiagram}, the strict transform of $\Psi(E\times E)$ is contracted by $\tilde{\mathcal{G}}$.
\end{proof}

\begin{remark}\normalfont

   The inclusion of an elliptic curve $E\subseteq J(X)$ as above makes the intermediate Jacobian $J(X)$ reducible. The principal polarization on $J(X)$ restricts to twice the principal polarization on $E$ \cite[Lemma 1.12]{CMZ} \cite[Proposition 4.8.1]{YZ_thesis}.
    
\end{remark}

%Now we are ready to prove the Theorem \ref{IntroVCThm} (1), which follows from the following statement.

\begin{theorem}\label{barT'BlowupThm}
The extended Gauss map $\tilde{\mathcal{G}}:\Bl_{0}(\Theta)\to (\mathbb P^4)^*$ factors through a birational morphism $\textup{Bl}_0(\Theta)\to \bar{\mathcal{T}}_v$, which contracts finitely many elliptic curves corresponding to the Eckardt points on the cubic threefold $X$. Moreover, the fiber of $\bar{\mathcal{T}}_v\to (\mathbb P^4)^*$ over an Eckardt hyperplane is a single point.
\end{theorem}

\begin{proof}

The Abel-Jacobi map $\Psi:F\times F\to J(X)$ is generically 6-to-1, and the image is the theta divisor $\Theta$. If we restrict $\Psi$ to the open subspace $(F\times F)^{\circ}$ parameterizing pairs of skew lines, then $\Psi|_{(F\times F)^{\circ}}$ factors through $\mathcal{T}_v$, because the six differences of two disjoint lines are in the same rational equivalent classes, or the nature of the Topological Abel-Jacobi map \cite[Definition 2.1.2]{Zhao}. Further, we note that these maps preserve projection to $U$, so we have the following diagram.

\begin{figure}[ht]
    \centering
\begin{tikzcd}
(F\times F)^{\circ}/U\arrow[r,"e"] \arrow[dr,"\Psi|_{(F\times F)^{\circ}}"'] & \mathcal{T}_v/U \arrow[d,"j"] \\
& \Theta^{\circ}/U
\end{tikzcd}
\end{figure}{}

Here $\Theta^{\circ}$ is the image of $(F\times F)^{\circ}$, which is dense open in $\Theta$. The evaluation map $e$ sends $(L_1,L_2)\mapsto [L_1]-[L_2]$ to the difference of the classes. Since $e$ is also 6-to-1, $j$ is an isomorphism. It follows that $\mathcal{T}_v\cong \Theta^{\circ}\hookrightarrow \textup{Bl}_0(\Theta)$ is an open dense subspace. 

Now we take the Stein factorization
$$\textup{Bl}_0(\Theta)\xrightarrow{\tilde{\mathcal{G}}_1} W\xrightarrow{\tilde{\mathcal{G}}_2} (\mathbb P^4)^*,$$
where $\tilde{\mathcal{G}}_1$ is birational and contracts the curves $E\cong\tilde{\Psi}(E\times E)$, while $\tilde{\mathcal{G}}_2$ is finite. Moreover, $W$ is normal since $\textup{Bl}_0(\Theta)$ is normal (cf. \cite[p.213]{GR84}).

Now, the composition $\mathcal{T}_v\hookrightarrow \textup{Bl}_{0}\Theta\to W$ is inclusion and preserves the projection to $(\mathbb P^4)^*$. So as branched covering maps, $W/(\mathbb P^4)^*$ extends $\mathcal{T}_v/U$. Due to the normality of $W$ and the uniqueness of extension analytic branched covering from Lemma \ref{analytic_cover_closure}, $W/(\mathbb P^4)^*\cong \bar{\mathcal{T}}_v/(\mathbb P^4)^*$. This proves the theorem.
\end{proof}

For the rest of the section, we will discuss the relationship of $\bar{\mathcal{T}}_v$ to other compactifications.

\subsection{Nash Blow-up}
\begin{definition}\normalfont
The \textit{Nash blow-up} of $\Theta$ is the graph closure $\hat{\Theta}$ of the Gauss map $\mathcal{G}:\Theta\dashrightarrow (\mathbb P^4)^*$ \eqref{Eqn_GaussMap}. Similarly, we call the graph closure $\widehat{F\times F}$ of the $\Phi: F\times F\dashrightarrow (\mathbb P^4)^*$ \eqref{Eqn_FFGauss} the Nash blow-up of $F\times F$.
\end{definition}

Both spaces are "minimal" modifications of $\Theta$ and $F\times F$ to assign hyperplanes continuously on the closure and extend rational maps $\mathcal{G}$ and $\Phi$ to morphisms. However, both spaces are singular, and their normalizations are spaces that we are familiar with.

\begin{proposition}
    The normalization of $\widehat{F\times F}$ is isomorphic to $\textup{Bl}_{\Delta_F}(F\times F)$. The normalization of $\hat{\Theta}$ is isomorphic to $\Bl_0(\Theta)$. 
\end{proposition}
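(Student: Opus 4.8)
The plan is to prove both isomorphisms by the same strategy: exhibiting each of the two candidate spaces, $\textup{Bl}_{\Delta_F}(F\times F)$ and $\Bl_0(\Theta)$, as the normalization of the respective Nash blow-up. The key structural fact I would use is that a Nash blow-up is by definition the graph closure of a rational map to $(\mathbb P^4)^*$, hence comes equipped with a tautological morphism to $(\mathbb P^4)^*$ together with the projection back to the original space. To identify the normalization, I would invoke the universal property of normalization: a normal variety $Y$ equipped with a birational morphism $Y\to \widehat{F\times F}$ that is finite (or an isomorphism over the smooth locus) is \emph{the} normalization provided $Y$ dominates the graph closure compatibly with both projections.

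First I would treat $\widehat{F\times F}$. By Lemma \ref{eqn_AJ-Gauss} and Lemma \ref{Lemma_Compactification_Tv_Fiber}, the rational map $\Phi:F\times F\dashrightarrow (\mathbb P^4)^*$ of \eqref{Eqn_FFGauss} is resolved by the blow-up $\sigma:\Bl_{\Delta_F}(F\times F)\to F\times F$; that is, $\tilde{\Phi}$ is a genuine morphism. Consequently the pair $(\sigma,\tilde{\Phi})$ defines a morphism $\Bl_{\Delta_F}(F\times F)\to \widehat{F\times F}$ into the graph closure, which is birational since both agree with $F\times F$ over the locus where $\Phi$ is already defined (the complement of $\Delta_F$). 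Since $\Bl_{\Delta_F}(F\times F)$ is smooth — $F$ is a smooth surface, so $F\times F$ is smooth and the blow-up along the smooth diagonal $\Delta_F$ is again smooth — it is in particular normal. By the universal property of normalization, this birational morphism from a normal variety factors the normalization map, and I would argue it \emph{is} the normalization by checking that $\Bl_{\Delta_F}(F\times F)\to\widehat{F\times F}$ is finite: over $\Delta_F$ the graph closure may acquire singularities from the indeterminacy of $\Phi$, but the blow-up separates the tangent directions encoding $T_pX$, so the fibers are finite. The exceptional divisor $\mathcal E\cong \mathbb P(T_F)$ (noted after \eqref{diagram_blowupAJ}) maps with finite fibers since $\tilde\Phi|_{\mathcal E}$ factors through the dual map $X\to X^*$, which is generically finite.

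For $\hat\Theta$ the argument is parallel, using the extended Gauss map. By Lemma \ref{Lemma_BlTheta} the blow-up $\Bl_0(\Theta)$ is smooth, and by \eqref{Eqn_LiftGaussMap} the Gauss map extends to a morphism $\tilde{\mathcal G}:\Bl_0(\Theta)\to (\mathbb P^4)^*$. The pair consisting of the blow-down $\Bl_0(\Theta)\to\Theta$ and $\tilde{\mathcal G}$ thus defines a birational morphism $\Bl_0(\Theta)\to\hat\Theta$ from a smooth, hence normal, variety, which is an isomorphism over the smooth locus of $\Theta$ where $\mathcal G$ is a morphism. As above, the universal property of normalization identifies $\Bl_0(\Theta)$ as the normalization of $\hat\Theta$ once I verify finiteness of this map over the singular point $0\in\Theta$; here the relevant check is that $\tilde{\mathcal G}$ restricted to the exceptional divisor $X$ is the dual map $x\mapsto T_xX$, which is finite onto its image since the only positive-dimensional fibers (by Proposition \ref{Prop_ExtendGauss}) are contracted \emph{elliptic curves}, and these are already collapsed by the blow-down direction rather than the $(\mathbb P^4)^*$ direction — so no positive-dimensional fiber of $\Bl_0(\Theta)\to\hat\Theta$ arises.

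The main obstacle I anticipate is precisely this finiteness verification over the exceptional loci, i.e.\ ruling out that the morphisms $\Bl_{\Delta_F}(F\times F)\to\widehat{F\times F}$ and $\Bl_0(\Theta)\to\hat\Theta$ contract any curve. A birational morphism from a smooth variety to a normal one is an isomorphism iff it is finite (by Zariski's main theorem, a birational finite morphism onto a normal target is an isomorphism), so the entire content reduces to showing no curve is contracted in the \emph{graph} — that is, that distinct points of the blow-up are never sent to the same point of $\widehat{F\times F}$ together with the same hyperplane. This is subtle on $\Delta_F$ and on the Eckardt loci, where $\tilde\Phi$ and $\tilde{\mathcal G}$ have positive-dimensional fibers over a common hyperplane $T_pX$; the resolution is that even when the map to $(\mathbb P^4)^*$ collapses these fibers, the map to $F\times F$ (resp.\ $\Theta$) remembers the point, so the combined map to the graph closure stays finite. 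Once finiteness is secured, Zariski's main theorem closes both cases simultaneously.
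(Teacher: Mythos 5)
Your overall strategy coincides with the paper's: realize $\Bl_{\Delta_F}(F\times F)$ as the graph of $\tilde\Phi$, project to $F\times F\times (\mathbb P^4)^*$ to get a birational morphism $n_1$ onto $\widehat{F\times F}$ (likewise $n_2:\Bl_0(\Theta)\to\hat\Theta$), and conclude by Zariski's main theorem once finiteness is known; you also correctly observe that finiteness is the entire content. The gap is that your finiteness verification does not hold up. The claim that "$\mathcal E$ maps with finite fibers since $\tilde\Phi|_{\mathcal E}$ factors through the dual map $X\to X^*$, which is generically finite" is a non sequitur (generic finiteness of one factor says nothing about fibers of a composite) and is in fact false when $X$ has Eckardt points: by Lemma \ref{lemma_I}, $\tilde\Phi|_{\mathcal E}$ is the composite $I\xrightarrow{p_2} X\to X^*$, and its fiber over an Eckardt hyperplane $T_pX$ is the elliptic curve of lines through $p$ (this is exactly the computation in Proposition \ref{Prop_Phitilde-fiber}). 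Your closing paragraph does state the correct mechanism — the product map is finite because $\sigma$ embeds precisely the curves that $\tilde\Phi$ contracts — but your sentence about $\hat\Theta$ gets this backwards: the elliptic curves of Proposition \ref{Prop_ExtendGauss} are collapsed by $\tilde{\mathcal G}$ (the $(\mathbb P^4)^*$ direction), and they are \emph{embedded} by the blow-down, being strict transforms of curves through $0\in\Theta$ rather than subvarieties of the exceptional divisor; that embedding is the reason $n_2$ is finite on them.

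What is genuinely missing is the other half of finiteness: that $\tilde\Phi$ is non-constant on each exceptional fiber $\sigma^{-1}(L,L)\cong\mathbb P^1$, and that $\tilde{\mathcal G}$ has finite fibers on the exceptional divisor $X\subseteq\Bl_0(\Theta)$. You assert both ("the blow-up separates the tangent directions encoding $T_pX$"; "the dual map \dots is finite onto its image"), but neither follows from anything you cite — finiteness of the Gauss/dual map of a smooth hypersurface is a genuine theorem, not a formality. This is exactly where the paper's proof does its work: via the Hilbert-scheme interpretation (Theorem \ref{Thm_SkewLines}), the points of $\sigma^{-1}(L,L)$ are the type (II)/(IV) double structures on $L$, each spanning its own hyperplane, so $n_1$ is one-to-one on that fiber when $N_{L|X}\cong\mathcal O\oplus\mathcal O$ and two-to-one when $N_{L|X}\cong\mathcal O(1)\oplus\mathcal O(-1)$; in particular no exceptional $\mathbb P^1$ is contracted by $\tilde\Phi$. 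If you prefer to stay within your fiber-theoretic setup, you can patch the argument as follows: a curve contracted by $n_1$ would be contracted by both $\sigma$ and $\tilde\Phi$, hence would be a rational curve lying in a positive-dimensional fiber of $\tilde\Phi$; but by Proposition \ref{Prop_Phitilde-fiber} those fibers are abelian surfaces $E\times E$, which contain no rational curves. For $n_2$, a curve contracted by both maps would lie in the exceptional divisor and in a fiber $\tilde{\mathcal G}^{-1}(T_pX)\cong E$; since $E$ is irreducible and is a strict transform not contained in the exceptional divisor, no such curve exists. With either repair the proof closes; as written, the finiteness step fails.
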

\begin{proof}
   Let $\Gamma(\tilde{\Phi})$ be the graph of $\tilde{\Phi}: \textup{Bl}_{\Delta_F}(F\times F)\to (\mathbb P^4)^*$. Let $p_1$ be the composition of projection to the first coordinate and $\sigma$, and $p_2$ be the projection to the second coordinate. Then there is a map
   $$p_1\times p_2: \textup{Bl}_{\Delta_F}(F\times F)\cong \Gamma(\tilde{\Phi})\to F\times F\times (\mathbb P^4)^*.$$
   
   Its image is closed and irreducible and contains the graph of $\Phi$ as a Zariski dense subspace, so the image has to be $\widehat{F\times F}$. In particular, there is a generically one-to-one map
   \begin{equation}
      n_1: \textup{Bl}_{\Delta_F}(F\times F)\to \widehat{F\times F}.
   \end{equation}
   
In fact, by the Hilbert scheme interpretation of $\textup{Bl}_{\Delta_F}(F\times F)$ (cf. Theorem \ref{Thm_SkewLines}). The map $n_1$ is an isomorphism off the diagonal, one-to-one on the fiber $\sigma^{-1}(L)$ of the line $L$ with $N_{L|X}\cong \mathcal{O}\oplus \mathcal{O}$, and two-to-one on the fiber $\sigma^{-1}(L)$ of the line $L$ with $N_{L|X}\cong \mathcal{O}(1)\oplus \mathcal{O}(-1)$. Geometrically, there is a unique type (II) subscheme supported on a line with normal bundle $\mathcal{O}\oplus \mathcal{O}$, whereas there are two subschemes $Z_p$ and $Z_{\bar{p}}$ of type (IV) supported on a line with normal bundle $\mathcal{O}(1)\oplus \mathcal{O}(-1)$, where $Z_p$ and $Z_{\bar{p}}$ have embedded points supported at $p$ and $\bar{p}$, such that $T_pX=T_{\bar{p}}X$.

In particular, $n_1$ is finite and birational. So by Zariski's main theorem, it is normalization.

Similar holds for $\Bl_{0}(\Theta)\to \hat{\Theta}$, whose restriction to the exceptional divisor is identified to the dual map $X\to X^*$, $x\mapsto T_xX$. 
\end{proof}

\begin{corollary} The map $\tilde{\Phi}$ and the extended Gauss map $\tilde{\mathcal{G}}$ \eqref{AJblowupDiagram} factor through the Nash blow-ups, and there is a commutative diagram 

\begin{figure}[ht]
    \centering
\begin{equation} \label{diagram_NashBlowup}
\begin{tikzcd}
\textup{Bl}_{\Delta_F}(F\times F)  \arrow[r,"\tilde{\Psi}"] \arrow[d,"n_1"] & \Bl_0(\Theta) \arrow[d,"n_2"]\arrow[dr,"\tilde{\mathcal{G}}"]\\
\widehat{F\times F}\arrow[r]& \hat{\Theta}\arrow[r] &(\mathbb P^4)^*,
\end{tikzcd}
\end{equation}
\end{figure}{}
\noindent where $n_1$ and $n_2$ are normalizations.
\end{corollary}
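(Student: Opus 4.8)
The plan is to assemble the square from objects already in hand: the two normalizations $n_1,n_2$ produced in the preceding proposition, the second-factor projections of the Nash blow-ups onto $(\mathbb{P}^4)^*$, and a single new horizontal arrow $\widehat{F\times F}\to\hat{\Theta}$ built from the Abel--Jacobi map. First I would observe that the factorizations of $\tilde{\Phi}$ and $\tilde{\mathcal{G}}$ are built into the construction of $n_1,n_2$. By definition $\widehat{F\times F}\subseteq(F\times F)\times(\mathbb{P}^4)^*$ carries the projection $\pi_{\widehat{F\times F}}$ to the second factor, a morphism extending $\Phi$; and $n_1$ was defined as $z\mapsto(\sigma(z),\tilde{\Phi}(z))$, so $\pi_{\widehat{F\times F}}\circ n_1=\tilde{\Phi}$. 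Symmetrically $n_2$ is $w\mapsto(\rho(w),\tilde{\mathcal{G}}(w))$ with $\rho:\mathrm{Bl}_0(\Theta)\to\Theta$ the blow-down, so $\pi_{\hat{\Theta}}\circ n_2=\tilde{\mathcal{G}}$. This already yields the right-hand triangle and realizes $\tilde{\mathcal{G}}$ through $\hat{\Theta}$.

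Second, I would produce the arrow $\bar{\Psi}:\widehat{F\times F}\to\hat{\Theta}$ by restricting the product morphism $\Psi\times\mathrm{id}_{(\mathbb{P}^4)^*}:(F\times F)\times(\mathbb{P}^4)^*\to\Theta\times(\mathbb{P}^4)^*$. The key input is Lemma \ref{eqn_AJ-Gauss}, which gives $\Phi=\mathcal{G}\circ\Psi$ as rational maps. Concretely, over a dense open $V\subseteq F\times F$ lying in the domain of $\Phi$ and mapping under $\Psi$ into the smooth locus $\Theta\setminus\{0\}$ on which $\mathcal{G}$ is regular, the morphism $\Psi\times\mathrm{id}$ sends a graph point $(x,\Phi(x))$ to $(\Psi(x),\mathcal{G}(\Psi(x)))$, a graph point of $\mathcal{G}$; hence it carries $\Gamma(\Phi)|_V$ into $\Gamma(\mathcal{G})$. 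Since $\widehat{F\times F}=\overline{\Gamma(\Phi)|_V}$ and a morphism sends the closure of a set into the closure of its image, we obtain $(\Psi\times\mathrm{id})(\widehat{F\times F})\subseteq\overline{\Gamma(\mathcal{G})}=\hat{\Theta}$, which defines $\bar{\Psi}$ as a genuine morphism of varieties. Because $\bar{\Psi}$ preserves the second coordinate, $\pi_{\hat{\Theta}}\circ\bar{\Psi}=\pi_{\widehat{F\times F}}$, which supplies the bottom row together with its compatibility with the projection to $(\mathbb{P}^4)^*$.

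Finally I would check the left-hand square $n_2\circ\tilde{\Psi}=\bar{\Psi}\circ n_1$ by a direct computation on points, which is cleanest here. On one side $n_1(z)=(\sigma(z),\tilde{\Phi}(z))$, so $\bar{\Psi}(n_1(z))=(\Psi(\sigma(z)),\tilde{\Phi}(z))$. On the other side $n_2(\tilde{\Psi}(z))=(\rho(\tilde{\Psi}(z)),\tilde{\mathcal{G}}(\tilde{\Psi}(z)))$, where $\rho\circ\tilde{\Psi}=\Psi\circ\sigma$ by the commutativity of \eqref{diagram_blowupAJ} and $\tilde{\mathcal{G}}\circ\tilde{\Psi}=\tilde{\Phi}$ by the commutativity of \eqref{AJblowupDiagram}; hence $n_2(\tilde{\Psi}(z))=(\Psi(\sigma(z)),\tilde{\Phi}(z))$ as well. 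Alternatively one notes both morphisms agree on the dense locus of skew pairs and invokes separatedness of $\hat{\Theta}$ together with reducedness of $\mathrm{Bl}_{\Delta_F}(F\times F)$. I expect the only genuinely delicate point to be the graph-closure bookkeeping in the second step --- namely choosing $V$ so that it simultaneously lies in the domain of $\Phi$ and maps under $\Psi$ into $\Theta\setminus\{0\}$, so that the pointwise identity $\Phi=\mathcal{G}\circ\Psi$ really holds there --- but this is guaranteed by Lemma \ref{eqn_AJ-Gauss} together with the fact that $0\in\Theta$ is the unique singular point, where $\mathcal{G}$ is undefined.
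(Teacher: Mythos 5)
Your proposal is correct and follows essentially the argument the paper intends: the factorizations $\tilde{\Phi}=\pi_{\widehat{F\times F}}\circ n_1$ and $\tilde{\mathcal{G}}=\pi_{\hat{\Theta}}\circ n_2$ are immediate from the graph-map construction of $n_1,n_2$ in the preceding proposition, the bottom arrow is induced by $\Psi\times\mathrm{id}$ on graph closures using Lemma \ref{eqn_AJ-Gauss}, and commutativity of the square follows pointwise from \eqref{diagram_blowupAJ} and \eqref{AJblowupDiagram}. The paper states the corollary without proof as an immediate consequence of that proposition, and your writeup supplies exactly the omitted details.
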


\subsection{Relations Among Compactifications of $(F\times F)^{\circ}$ and $\mathcal{T}_v$}
So far, we found four compactifications of $(F\times F)^{\circ}$:
\begin{itemize}
    \item[(a)] The Chow variety $F\times F$ parameterizing order pairs of lines on $X$,

\item[(b)]  the nested Hilbert scheme $\Bl_{\Delta_F}(F\times F)$ of a pair of skew lines on $X$,

\item[(c)] the Stein completion $\overline{(F\times F)^{\circ}}$, obtained by applying Stein's completion to $(F\times F)^{\circ}\to U$, and 
\item[(d)] the Nash blow-up $\widehat{F\times F}$.

\end{itemize}

The four compactifications are related by the following diagram:

\begin{figure}[ht]
    \centering
    \begin{equation}\label{diagram_M-Completion}
\begin{tikzcd}
\Bl_{\Delta_F}(F\times F) \arrow[r,"c"]\arrow[d,"n_1"]\arrow[dr,"\sigma"]&\overline{(F\times F)^{\circ}} \arrow[d,"f"]\\
\widehat{F\times F}\arrow[r]&F\times F.
\end{tikzcd}
    \end{equation}
\end{figure}

$\sigma$ is the blow-up along the diagonal. The Stein factorization of $\sigma$  factors through $\overline{(F\times F)^{\circ}}$, where $c$ contracts finitely many abelian surfaces of the form $E_i\times E_i$ and $f$ is finite. Again $E_i$ is an elliptic curve corresponding to an Eckardt point on $X$ and $c$ will be an isomorphism when the cubic threefold $X$ is general.

Similarly, there are four compactifications of $\mathcal{T}_v$:
\begin{itemize}
    \item[(a)] the theta divisor $\Theta$,
    \item[(b)] the blow-up of the theta divisor $\Bl_0(\Theta)$,
    \item[(c)] the Stein completion $\bar{\mathcal{T}}_v$, and 
    \item[(d)] the Nash blow-up $\hat{\Theta}$.
\end{itemize}

They are related by the right face of the following three-dimensional diagram, which also includes the diagram \eqref{diagram_NashBlowup} and \eqref{diagram_M-Completion} the back face and the left face.

\begin{equation}\label{diagram_total}
\begin{tikzcd}[row sep=1.5em, column sep = 1.5em]
\Bl_{\Delta_F}(F\times F)\arrow[rr,"\tilde{\Psi}"] \arrow[dr,"c_1"] \arrow[dd,swap,"n_1"] &&
\Bl_0(\Theta) \arrow[dd,swap,near start,"n_2"] \arrow[dr,"c_2"] \\
& \overline{(F\times F)^{\circ}}\arrow[rr, crossing over] &&
\bar{\mathcal{T}}_v  \arrow[d] \\
\widehat{F\times F} \arrow[rr] \arrow[dr] && \hat{\Theta} \arrow[dr]\arrow[r]&(\mathbb P^4)^*   \\
& F\times F \arrow[rr,"\Psi"] && \Theta\arrow[u,dashed,swap,"\mathcal{G}"] 
\arrow[from=2-2, to=4-2,crossing over]
%\arrow[from=2-4, to=4-4,crossing over]
\end{tikzcd}
\end{equation}

$c_1$ and $c_2$ on the top face are contractions and arise from Stein factorization. Also, $\overline{(F\times F)^{\circ}}\to (\mathbb P^4)^*$ factors through $\bar{\mathcal{T}}_v$ due to uniqueness of Stein completion.

\begin{proposition} The diagram 
\eqref{diagram_total} is commutative. The horizontal arrows are induced by Abel-Jacobi map \eqref{Intro_eqn_AJ} and are generically finite. All vertical regular maps are finite, and all diagonal maps are birational.

\end{proposition}

\section{Boundary Points and Minimal Resolutions} \label{Section_Cubic3fold_Boundary}

In this section, we would like to understand the geometric meaning of the fiber $\bar{\mathcal{T}}_v\to (\mathbb P^4)^*$ over a point $t_0$ where hypersection $X_{t_0}$ is singular (cf. Definition \ref{Intro_Def_LPVS}). For example, what is the relation of these boundary points to the singularities of $X_{t_0}$ and how many of them are they?

Our starting point is the following observation: Let $\{S_t\}$ be a one-parameter family of surfaces degenerating to a surface with an ordinary node when $t=0$, then by a base change and desingularization, the vanishing cycle specializes to an effective $(-2)$-curve (cf. Example \ref{Exa_CubicNode}). In the spirit of Brieskorn's resolution, there is a similar picture for the degeneration of surfaces with ADE singularities. Based on this observation, and the monodromy of Milnor fiber of hypersurfaces with ADE singularities by Arnold et al., we will describe the boundary points of the compactification of $\mathcal{T}_v$.

First of all, we have a topological interpretation in terms of local monodromy: For each $t_0\in (\mathbb P^4)^*$, pick a suitably small open neighborhood $B$ of $t_0$. Fix another base point $t'\in B^{\textup{sm}}:=U\cap B$, then the monodromy action on the root system $R_{X_{t'}}\cong R(\mathbb E_6)$ over $t'$ via monodromy action around $t_0$ is a homomorphism
\begin{equation}\label{eqn_LocMonodromyAction-General}
   \rho_{t_0}: \pi_1(B^{\textup{sm}},t')\to \Aut \Hv^2(X_{t'},\Z)\cong W(\mathbb E_6),
\end{equation}
which is the same as the automorphism of the root system.

\begin{definition}\normalfont\label{Def_localMonGrp_compact}
We call $G_{t_0}=\textup{Im}(\rho_{t_0})$ the \textit{(local) monodromy group} on the root system of the family of hyperplane sections of $X$ around $X_{t_0}$. 
\end{definition}

By normality of completion of finite analytic cover in Lemma \ref{analytic_cover_closure}, our first observation is%we have the following monodromy interpretation of the fiber of \eqref{Eqn_bar-piv}
\begin{lemma}\label{Lemma_Cubic3fold_LocMonodromyOrbit}
There is a bijection between the fiber of $\bar{\mathcal{T}}_v\to (\mathbb P^4)^*$ at $t_0$ and the set of orbits of monodromy action $G_{t_0}\acts R(\mathbb E_6)$.
\end{lemma}

On the other hand, the hyperplane section $X_{t_0}$ is a normal cubic surface and has either ADE singularities or an elliptic singularity (cf. Lemma \ref{classicication of cubic surfaces}).  So to provide some geometric understanding of the compactification $\bar{\mathcal{T}}_v$ as proposed in Question \ref{Question_main}, we would like to answer the following question:

\begin{question}\label{Question_Boundary}
 What is the relation between the orbits of monodromy action $G_{t_0}\acts R(\mathbb E_6)$ and the singularities of the cubic surface $X_{t_0}$?
\end{question}

For an isolated hypersurface singularity of ADE type, it is known by Arnold and Gabrelov (cf. Lemma  \ref{GabrielovLem}) that the monodromy group is isomorphic to the Weyl group corresponding to the minimal resolution.

\subsection{Minimal Resolution}
Let $S$ be a cubic surface with at worst ADE singularities. Let $$\sigma:\tilde{S}\to S$$ be its minimal resolution, then $\tilde{S}$ is a weak del Pezzo surface of degree $3$ and one can still define the root system $R(\tilde{S})$ as 
\begin{equation}
    \alpha^2=-2,~\alpha\cdot K_{\tilde{S}}=0. \label{rootsystemeqn}
\end{equation}

It is known that $R(\tilde{S})$ is isomorphic to $R(\mathbb E_6)$. This can be seen as follows. One can regard $\tilde{S}$ as blowing up six bubble points on $\mathbb P^2$ in an almost general position. The six points can be deformed smoothly as they move to a general position along a real one-dimensional path. As the equation \eqref{rootsystemeqn} is a topological invariant, the root system on $\tilde{S}$ is defined.

Note that each irreducible component $C$ of the exceptional divisor of $\sigma$ is a $(-2)$ curve and is orthogonal to the class $K_{\tilde{S}}$ since $\sigma$ is crepant, so $C$ defines a root and is effective as divisor class. We call such root an \textit{effective root}. The set of all effective roots generates a sub-root system $R_e$ of $R(\tilde{S})$. Since each of the singularity, $x_i$ of $S$ corresponds to a bunch of $(-2)$-curves on $\tilde{S}$ and they generate a sub-root system $R_i$, $R_e$ is isomorphic to the product $\prod_{i\in I}R_i$, where $I$ is the index set of singularities of $S$. Each $R_i$ corresponds to a connected sub-diagram of the Dynkin diagram of $\mathbb E_6$. One refers to \cite{Dolgachev}, sections 8.1, 8.2, 8.3, and 9.1 for the detailed discussion.

Moreover, the reflections with respect to all of the effective roots define a subgroup $W(R_e)$ of the Weyl group $W(\mathbb E_6)$, and $W(R_e)$ is isomorphic to the product $\prod_{i\in I}W_i$, where $W_i$ is the Weyl group generated by the reflections corresponding to the exceptional curves over the singularity $x_i$. One can consider the action of $W(R_e)$ on $R(\mathbb E_6)$. The orbits that are contained in $R_e$ are naturally in bijection with the set of singularities on $S$. One can also define the maximal/minimal root of an orbit. In particular, the maximal root of the orbit corresponding to $x_i$ equals the cohomology class of the fundamental cycle $Z_i$ at $x_i$. The readers can refer to \cite{LLSvS}, section 2.1 for details.

\begin{definition}\normalfont
Let $R(S)$ be the set of the orbits in $R(\mathbb E_6)$ under the action of $W(R_e)$. We call $R(S)$ to be the root system on $S$.
\end{definition}

Note that $R(S)$ is just a set, without any intersection pairing. In fact, the set $R(S)=R(\mathbb E_6)/W(R_e)$ is used in \cite[Theorem 2.1]{LLSvS} to parameterize the connected components of the (reduced) Hilbert scheme of generalized twisted cubics on $S$.

\subsection{Main Results}
Our key proposition in this section is the following.
\begin{proposition} \label{MonoActGloProp}  Assume that the cubic surface $X_{t_0}$ has at worst ADE singularities. Then there is an isomorphism
\begin{equation}
    G_{t_0}\cong W(R_e)
\end{equation}
between the monodromy group $G_{t_0}$ of hyperplane sections of $X$ near $X_{t_0}$ and the subgroup of $W(\mathbb E_6)$ generated by the reflections of all effective roots in the minimal resolution $\tilde{X}_{t_0}$ of $X_{t_0}$.
\end{proposition}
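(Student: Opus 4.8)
The plan is to prove the isomorphism $G_{t_0} \cong W(R_e)$ by comparing the global monodromy of the family of hyperplane sections near $X_{t_0}$ with the local monodromy arising from the semi-universal deformation of the ADE singularities. First I would set up the comparison: around $t_0$, the cubic surface $X_{t_0}$ acquires finitely many ADE singularities $x_i$, $i \in I$. By the local structure of the Lefschetz fibration, a small punctured neighborhood $B^{\textup{sm}} = U \cap B$ of $t_0$ contributes monodromy localized at each singular point. The key input is the result of Arnold and Gabrielov (cited as Lemma \ref{GabrielovLem}) which identifies the local monodromy group of the Milnor fiber of an isolated ADE singularity of type $\tau_i$ with the Weyl group $W_i = W(R_i)$ acting on the vanishing lattice, generated by the Picard–Lefschetz reflections in the vanishing cycles. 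The vanishing cycles of the $i$-th singularity correspond precisely to the $(-2)$-curves in the exceptional fiber of the minimal resolution over $x_i$ (this is the content of Example \ref{Exa_CubicNode} and Brieskorn's simultaneous resolution), so the vanishing cycles generate the effective sub-root system $R_i$.

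The second step is to show the local contributions assemble correctly into the global monodromy group. For this I would argue that the map $\rho_{t_0}$ on $\pi_1(B^{\textup{sm}}, t')$ factors through the product of local monodromies at the separate singular points. Since $X_{t_0}$ has only finitely many isolated singularities, shrinking $B$ so that the singularities do not interact, the deformation of $X_{t_0}$ within the family is transverse to the discriminant and realizes the full semi-universal deformation of each singularity $x_i$ (this is where I would lean on Proposition \ref{MonoActGloProp}'s own hypothesis that the universal hyperplane sections capture "maximal" topological information — meaning the Kodaira–Spencer map is surjective onto the deformation space of the singularities). Granting this surjectivity, the image $G_{t_0}$ is generated by the Picard–Lefschetz reflections in all the vanishing cycles of all the singularities, hence equals the subgroup generated by the reflections $r_\alpha$ for $\alpha$ ranging over all effective roots, which is by definition $W(R_e) = \prod_{i \in I} W_i$.

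Finally I would verify that the effective roots identified via the minimal resolution match the vanishing cycles under the identification $\Hv^2(X_{t'}, \Z) \cong R(\mathbb E_6)$. The bridge is the specialization map: as the smooth fiber $X_{t'}$ degenerates to $X_{t_0}$ and one passes to the minimal resolution $\tilde{X}_{t_0}$, the vanishing cycles flatly translate to the classes of the exceptional $(-2)$-curves, exactly as described in Example \ref{Exa_CubicNode} for the single-node case. This identifies the vanishing-cycle lattice with the lattice spanned by the effective roots $R_e \subseteq R(\tilde{X}_{t_0}) \cong R(\mathbb E_6)$, compatibly with the intersection pairing, so the reflection groups coincide.

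The main obstacle I expect is establishing the surjectivity of the Kodaira–Spencer / deformation map needed in the second step — i.e.\ that the pencil of hyperplane sections through $t_0$ genuinely realizes the full semi-universal deformation of each ADE singularity on $X_{t_0}$, rather than a proper subfamily that could produce a smaller monodromy group. This transversality to the discriminant is precisely what guarantees $G_{t_0}$ is the whole product $\prod W_i$ and not some subgroup, and it is the content that must be borrowed from Proposition \ref{MonoActGloProp}'s setup; checking it rigorously for cubic surfaces as hyperplane sections (where the number of moduli of the hyperplane is constrained) is the delicate point, and I would handle it by a dimension count comparing $\dim U$ near $t_0$ with the total Tjurina number $\sum_i \mu(x_i)$ of the singularities.
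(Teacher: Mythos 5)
Your overall skeleton --- local ADE monodromy equals the Weyl group via Arnold/Gusein--Zade, assembly of the local groups into $G_{t_0}$, and identification of vanishing cycles with the exceptional $(-2)$-curves via simultaneous resolution --- matches the paper's architecture (Proposition \ref{LocMonGpProp}, Proposition \ref{MilInjProp}, Corollary \ref{LocGloMonoGrpCor}, and the final assembly). But the step you yourself flag as the crux is resolved by a method that cannot work. First, note that ``maximality'' of the monodromy is the \emph{conclusion} of Proposition \ref{MonoActGloProp} (see the Remark following it), not a hypothesis, so leaning on it is circular; what remains is your fallback, namely proving that the hyperplane-section family is versal (Kodaira--Spencer surjective onto the semi-universal deformation space) by a dimension count against the total Milnor/Tjurina number. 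This fails: the base is a ball in $(\mathbb P^4)^*$, of dimension $4$, while the semi-universal base has dimension $\sum_i \mu_i$, which for cubic surfaces reaches $5$ and $6$ (types $A_5$, $D_5$, $E_6$, $A_1+A_5$, $3A_2$, $A_1+A_4$, etc., all occurring in the paper's table). In exactly these cases a linear map $\mathbb C^4 \to \mathbb C^{\sum_i\mu_i}$ cannot be surjective, so versality fails, and your dimension count would certify the failure rather than repair it. Versality is sufficient but not necessary for full Weyl-group monodromy, and the proposition is true precisely in this non-versal range.

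The paper's substitute for this step is the real content of its proof. (a) A local version of Zariski's theorem on Lefschetz pencils (\cite[Theorem 3.22]{Voisin2}) gives a surjection $\pi_1(\mathbb L\cap B^{\sm},t')\twoheadrightarrow \pi_1(B^{\sm},t')$ for a general line $\mathbb L$ through $t'$ meeting the discriminant transversely at smooth points, so it suffices to produce the full Weyl group from a single general pencil. (b) Lemma \ref{analyMilLem}, which uses only smoothness of $X$ (the coefficient of $w$ in the defining equation is a unit near the singular point, so one may divide by it), shows this pencil is \emph{analytically equivalent} to a family $f'(x_1,x_2,x_3)+w=0$ with $f'$ a Morse perturbation of $f$. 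After this identification the pencil literally is the Milnor fibration of a Morsification, and Lemma \ref{GabrielovLem} yields the full Weyl group: a Morsification is a one-parameter family and already realizes the entire monodromy group, which is why no versality is needed. Two smaller gaps in your write-up: the identification $G_{t_0}\cong\prod_i W_i$ as a \emph{direct product} needs the orthogonality of vanishing cycles supported in disjoint balls $D_i$ (this is why Picard--Lefschetz operators attached to distinct singularities commute); and the injectivity $H_2(F,\Z)\to H_2(X_t,\Z)_{\van}$ of Proposition \ref{MilInjProp} (via Brieskorn--Artin simultaneous resolution) is what makes each local Weyl group embed into $G_{t_0}$ --- you invoke simultaneous resolution only to match vanishing cycles with $(-2)$-curves, which is a different use.
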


\begin{remark}\normalfont
    Note by definition, $W(R_e)$ is the product of Weyl groups of type ADE corresponding to the singularities on the cubic surface $X_{t_0}$, so each of the summands is the monodromy group of the deformation of surface singularity of type ADE over its semi-universal deformation space. Therefore, Proposition \ref{MonoActGloProp} implies that the local monodromy group of $X_{t_0}$ in the four-dimensional space as hyperplane varies is the "maximal" one. 
    
  \end{remark}

So we answer Question \ref{Question_Boundary} with the following theorem.
\begin{theorem} \label{MonOrbThm}
There is a bijection of sets
$$PV_{t_0}\longleftrightarrow R(X_{t_0}).$$
\end{theorem}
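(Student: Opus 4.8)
The plan is to assemble Theorem \ref{MonOrbThm} from the three ingredients already in hand, treating it as a two-step identification of sets. On the one side, Lemma \ref{Lemma_Cubic3fold_LocMonodromyOrbit} already gives a bijection between the fiber $PV_{t_0} = \bar\pi_v^{-1}(t_0)$ and the set of orbits of the local monodromy action $G_{t_0}\acts R(\mathbb E_6)$. On the other side, the target $R(X_{t_0})$ is by definition the orbit set $R(\mathbb E_6)/W(R_e)$. So the entire content of the theorem reduces to showing that these two orbit sets coincide, i.e.\ that the two group actions on $R(\mathbb E_6)$ partition it identically.

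The key step is therefore to invoke Proposition \ref{MonoActGloProp}, which supplies an isomorphism $G_{t_0}\cong W(R_e)$ precisely in the ADE case. First I would record that, under the identification $\Aut\Hv^2(X_{t'},\Z)\cong W(\mathbb E_6)$ used to define $\rho_{t_0}$, this is not merely an abstract group isomorphism but an equality of subgroups of $W(\mathbb E_6)$ acting on the same root system $R(\mathbb E_6)$. This compatibility is the crucial point: two subgroups that are abstractly isomorphic need not have the same orbits, so I must check that Proposition \ref{MonoActGloProp} identifies $G_{t_0}$ and $W(R_e)$ \emph{as subgroups of} $W(\mathbb E_6)$, matching the monodromy generators (Picard--Lefschetz reflections $r_\alpha$ attached to the vanishing cycles of the ADE degeneration, cf.\ \eqref{eqn_PLtransform} and the discussion after Lemma \ref{GabrielovLem}) with the reflections $r_C$ attached to the effective $(-2)$-curves $C$ on the minimal resolution $\tilde X_{t_0}$. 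Granting this, the orbit decompositions of $R(\mathbb E_6)$ under $G_{t_0}$ and under $W(R_e)$ are literally the same partition.

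Chaining the two bijections then gives
\begin{equation}
PV_{t_0}\;\xrightarrow{\ \sim\ }\; R(\mathbb E_6)/G_{t_0}\;=\;R(\mathbb E_6)/W(R_e)\;=\;R(X_{t_0}),
\end{equation}
where the first arrow is Lemma \ref{Lemma_Cubic3fold_LocMonodromyOrbit}, the middle equality is Proposition \ref{MonoActGloProp} read as an equality of subgroups, and the last is the definition of $R(S)$ for $S=X_{t_0}$. I would state the theorem only for $X_{t_0}$ with at worst ADE singularities, since that is the hypothesis under which Proposition \ref{MonoActGloProp} is available; the elliptic (Eckardt) case is excluded here and handled separately later via semistable reduction.

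The main obstacle is the subgroup-compatibility check flagged above, and I expect the proof of Proposition \ref{MonoActGloProp} (where the real work lives) to carry it: one must verify that the local monodromy around $X_{t_0}$ is \emph{generated} by the Picard--Lefschetz reflections in the vanishing cycles produced by the family, and that these vanishing cycles are exactly the effective roots coming from the $(-2)$-curves of $\tilde X_{t_0}$ via the base-change-and-desingularize picture of Example \ref{Exa_CubicNode}. Once Proposition \ref{MonoActGloProp} is phrased and proved as an equality of reflection subgroups inside $W(\mathbb E_6)$ rather than a bare isomorphism, Theorem \ref{MonOrbThm} is a formal consequence and the proof is short.
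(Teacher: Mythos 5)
Your proposal matches the paper's proof exactly: the paper derives Theorem \ref{MonOrbThm} in one line by combining Lemma \ref{Lemma_Cubic3fold_LocMonodromyOrbit} with Proposition \ref{MonoActGloProp}, just as you do. The subgroup-compatibility point you rightly flag is indeed carried by the paper's proof of Proposition \ref{MonoActGloProp}, which shows the monodromy generators (Picard--Lefschetz reflections in the vanishing cycles $\delta_i^j$) generate, inside $W(\mathbb E_6)$, precisely the subgroup generated by reflections in the exceptional $(-2)$-curves over each singularity, so the two actions on $R(\mathbb E_6)$ coincide as actions and not merely up to abstract isomorphism.
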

\begin{proof}
This follows from Lemma \ref{Lemma_Cubic3fold_LocMonodromyOrbit} and Proposition \ref{MonoActGloProp}.
\end{proof}
\begin{example} \normalfont
Let $\{X_t\}_{t\in \Delta}$ be a one-parameter family of cubic surface with $X_{0}$ having an $A_1$ singularity as considered in Example \ref{Exa_CubicNode}. Then there is a vanishing cycle $\delta$ supported on a nearby fiber $X_{t}$ where $t\in \Delta^*$. The monodromy representation $\pi_1(\Delta^*,t)\to \Aut R(\mathbb E_6)$ is generated by the Picard-Lefschetz transformation
\begin{equation}
   T_{\delta}:\alpha\mapsto \alpha+(\alpha,\delta)\delta. \label{PLeqn}
\end{equation}
$T_{\delta}$ has order two and coincides with the reflection of the root system along $\delta$. So the orbits of local monodromy action are identified with the orbits of Weyl group $W(A_1)=\mathbb Z_2$ action.

Use the same notation as Proposition \ref{VC=[L]-[M]Prop} and choose the vanishing cycle $\delta=2h-e_1-\cdots-e_6$. The 72 roots can be expressed as different sets of classes

(1) $\pm \delta$, $2$ roots;

(2) $\pm(h-e_i-e_j-e_k)$, $i,j,k$ distinct, $40$ roots;

(3) $e_i-e_j, i\neq j$, $30$ roots.

The roots in (1) and (2) have nonzero intersections with $\delta$, so they correspond to connected 2-to-1 covers of $\Delta^*$. The roots in (3) are orthogonal to $\delta$ and correspond to trivial covers of $\Delta^*$, so by the Picard-Lefschetz formula, the number of monodromy orbits is $(2+40)/2+30=51$. So $|PV_{0}|=|R(X_{0})|=51$.

Using Proposition \ref{Prop_Homological}, the orbit in (1) is represented by a type (II) scheme, orbits in (2) are represented by type (I) schemes, while the orbits in (3) are represented by type (III) schemes.

\end{example}

In \cite[Theorem 2.1]{LLSvS}, the authors showed that there is a one-to-one correspondence between $R(S)=R(\mathbb E_6)/W(R_e)$ are and the connected components of the reduced Hilbert schemes of generalized twisted cubics on $S$. The orbits that contain an effective root correspond to generalized twisted cubics that are not Cohen-Macaulay (whose reduced schemes are planar). The orbits without any effective roots correspond to the generalized twisted cubics that are arithmetic Cohen-Macaulay (whose reduced schemes are not planar).  \cite[Section 3]{LLSvS} showed that there is a bijective between the $W(R_e)$-orbit on $R(\tilde{S})\setminus R_e$ and the linear determinantal representations of cubic surfaces. The cardinality of such orbits is listed on \cite[p.102, Table 1]{LLSvS}. On the other hand, we know that the cardinality of the orbits on $R_e$ is exactly the number of the singularities. So we obtain the cardinality of the root system $R(S)$ by adding up the two numbers.

\begin{corollary}
Let $X_{t_0}$ be a cubic surface with ADE singularities arising from a hyperplane section of cubic threefold $X$. Then the cardinality $\#=|PV_{t_0}|=|R(X_{t_0})|$  of primitive vanishing cycles (the root system) on $X_{t_0}$ is listed in the table below.
\end{corollary}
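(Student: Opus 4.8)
The plan is to turn the corollary into a purely combinatorial orbit count and then import the one nontrivial enumeration from \cite{LLSvS}. By Theorem~\ref{MonOrbThm} we already have $|PV_{t_0}|=|R(X_{t_0})|$, and by definition $R(X_{t_0})=R(\mathbb E_6)/W(R_e)$, so it suffices to count the orbits of the action $W(R_e)\acts R(\mathbb E_6)$. The first observation is that $R_e\subseteq R(\mathbb E_6)$ is stable under $W(R_e)$, since the Weyl group of a sub-root system carries that sub-root system to itself; consequently the complement $R(\mathbb E_6)\setminus R_e$ is also $W(R_e)$-stable, and the total orbit count splits as the sum of the orbits contained in $R_e$ and the orbits disjoint from $R_e$.

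For the orbits inside $R_e$, I would use the product decompositions $R_e\cong\prod_{i\in I}R_i$ and $W(R_e)\cong\prod_{i\in I}W_i$ recorded above, where $I$ indexes the singular points of $X_{t_0}$ and each $R_i$ is the (simply-laced) sub-root system cut out by the $(-2)$ curves over $x_i$. For $j\neq i$ the factor $W_j$ fixes $R_i$ pointwise, while $W_i$ acts transitively on $R_i$ because all roots of a simply-laced system have the same length. Hence each $R_i$ is a single $W(R_e)$-orbit, and the number of orbits meeting $R_e$ equals $|I|$, the number of singular points of $X_{t_0}$.

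For the orbits disjoint from $R_e$, I would invoke \cite[Section~3]{LLSvS}, which provides a bijection between the $W(R_e)$-orbits on $R(\mathbb E_6)\setminus R_e$ and the linear determinantal representations of the singular cubic surface $X_{t_0}$; the number of these, for each ADE configuration occurring on a cubic surface, is tabulated in \cite[p.~102, Table~1]{LLSvS}. Adding the two contributions gives
\[
\#=|PV_{t_0}|=|R(X_{t_0})|=|I|+\#\{\text{linear determinantal representations of }X_{t_0}\},
\]
which is the entry recorded in the table. As a consistency check, a single node ($A_1$) contributes $|I|=1$ effective orbit and $50$ non-effective orbits, recovering $|PV_{t_0}|=51$ from the worked example above.

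The only genuinely substantial input is the non-effective count, which is not uniform across ADE types and is precisely what \cite[Table~1]{LLSvS} supplies; once that enumeration and the bijection with determinantal representations are taken as given, the remaining work is the bookkeeping above. The main point requiring care is therefore not the effective orbits (which the transitivity argument settles cleanly) but checking that each ADE configuration appearing on $X_{t_0}$ indeed occurs in the \cite{LLSvS} classification with the stated count, so that the table may be quoted verbatim for every case of the corollary.
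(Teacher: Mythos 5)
Your proposal is correct and follows essentially the same route as the paper: both reduce via Theorem~\ref{MonOrbThm} to counting $W(R_e)$-orbits on $R(\mathbb E_6)$, split the count into orbits meeting $R_e$ (one per singularity) and orbits disjoint from $R_e$ (counted via the bijection with linear determinantal representations in \cite[Section~3, Table~1]{LLSvS}), and add the two numbers. The only difference is that you explicitly justify the effective-orbit count by the transitivity of each $W_i$ on the irreducible simply-laced system $R_i$ together with orthogonality across distinct singularities, a fact the paper simply quotes from \cite[Section~2.1]{LLSvS}.
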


\begin{table}[!ht]
\begin{center}
\begin{tabular}{c|c|r||c|c|r||c|c|r}
$R_e$&Type&\#&$R_e$&Type&\#&$R_e$&Type&\#\\\hline
$\emptyset$&I&72&$4A_1$&XVI&17&$A_1+2A_2$&XVII&9\\
$A_1$&II&51&$2A_1+A_2$&XIII&15&$A_1+A_4$&XIV&6\\
$2A_1$&IV&36&$A_1+A_3$&X&12&$A_5$&XI&5\\
$A_2$&III&31&$2A_2$&IX&14&$D_5$&XV&3\\
$3A_1$&VIII&25&$A_4$&VII&9&$A_1+A_5$&XIX&3\\
$A_1+A_2$&VI&22&$D_4$&XII&7&$3A_2$&XXI&5\\
$A_3$&V&17&$2A_1+A_3$&XVIII&8&$E_6$&XX&1\\
\end{tabular}\\[1ex]
\end{center}
\caption{Numbers of primitive vanishing cycles on cubic surfaces with ADE singularities.}
\end{table}

\begin{remark}\normalfont
    According to Lemma \ref{Lemma_BlTheta}, the exceptional divisor of $\Bl_0(\Theta)$ is isomorphic to $X$, and it is ismorphic onto its image under the contraction $\Bl_0(\Theta)\to \bar{\mathcal{T}}_v$ (cf. Theorem \ref{barT'BlowupThm}). The component $X\subseteq \bar{\mathcal{T}}_v$ parameterizes equivalent classes of effective roots: There is a commutative diagram
\begin{figure}[ht]
    \centering
\begin{equation} 
\begin{tikzcd}
X \arrow[r,hookrightarrow] \arrow[d,"\mathcal{D}"] & \bar{\mathcal{T}}_v\arrow[d,"\bar{\pi}_v"]\\
X^*\arrow[r,hookrightarrow]& (\mathbb P^4)^*.
\end{tikzcd}
\end{equation}
\end{figure}{}

    The restriction of $\Bar{\pi}_v$ to $X$ is isomorphic to the dual map. The fiber of $\mathcal{D}$ over $t_0$ corresponds to the vanishing cycles associated with the singularities on $X_{t_0}$. These vanishing cycles are exactly the $W(R_e)$-orbits on the effective roots. So we can say the component $X\subseteq \bar{\mathcal{T}}_v$ parameterizes effective limiting primitive vanishing cycles.
\end{remark}

\subsection{A Local Argument}

 We will start to prove Proposition \ref{MonoActGloProp} from this section. We need first to study the local monodromy of Milnor fiber of a single singularity on a cubic surface $X_{t_0}$.

Let $p:\mathcal{X}\to B$ be the family of cubic surfaces over the ball $B$ arising from hyperplane sections on $X$. Let $x_0$ be an isolated singularity of $X_{t_0}$, where $X_{t_0}$ is the hyperplane section $X\cap H_{t_0}$. Take a small ball $D_0$ in the total space $\mathcal{X}$ around $x_0$. Then by restricting to $D_0^{\textup{sm}}=D_0\setminus p^{-1}(X^*)$, the morphism
$$p^{\textup{sm}}:D_0^{\textup{sm}}\to B^{\textup{sm}}$$
is a smooth fiber bundle. Let $F$ be a fiber, then there is a monodromy representation
\begin{equation}
    \rho_{t_0,x_0}:\pi_1(B^{\textup{sm}},t')\to \textup{Aut}H^{2}(F,\mathbb Z). \label{MonActLoc}
\end{equation}
\begin{definition}\normalfont
We call the image $G_{t_0,x_0}:=\textup{Im}(\rho_{t_0,x_0})$ the (local) monodromy group on cohomology of Milnor fiber of the singularity $x_0$ on $X_{t_0}$.
\end{definition}

\begin{proposition} \label{LocMonGpProp}
Suppose the singularity $x_0\in X_{t_0}$ has type ADE. Then the local monodromy group $G_{t_0,x_0}$ around $x_0$ is isomorphic to the Weyl group $W_{x_0}$ of the Lie algebra that corresponds to the ADE type of the singularity $x_0$.
\end{proposition}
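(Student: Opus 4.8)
The plan is to reduce the statement about the local monodromy of the family of hyperplane sections to the classical computation of the monodromy of the semi-universal deformation of an isolated ADE surface singularity. Recall that for an isolated hypersurface singularity $x_0\in X_{t_0}$ of type ADE, the work of Brieskorn, Arnold, and Gabrielov (cf.\ Lemma~\ref{GabrielovLem}) identifies the monodromy group acting on the cohomology of the Milnor fiber, taken over the base of the \emph{semi-universal} (miniversal) deformation of the singularity, with the Weyl group $W_{x_0}$ of the corresponding Lie algebra. So the entire content of the proposition is that the one-parameter directions available inside $B^{\textup{sm}}\subseteq U$ — that is, the deformations of $x_0$ realized by moving the hyperplane $H_{t_0}$ in $(\mathbb P^4)^*$ — already exhaust the full monodromy of the miniversal deformation, i.e.\ the family $p^{\textup{sm}}$ is a \emph{versal} deformation of the singularity germ $(X_{t_0},x_0)$.

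First I would set up the local picture precisely: after choosing local analytic coordinates near $x_0\in D_0\subseteq \mathcal X$, the germ of $X_{t_0}$ at $x_0$ is an ADE surface singularity, and restricting $p$ to $D_0$ presents a deformation of this germ parametrized by the base germ $(B,t_0)$. By the general theory of versal deformations (Kas--Schlessinger, or the treatment in Arnold--Gusein-Zade--Varchenko), there is a miniversal deformation of the singularity over a base $\mathbb C^{\mu}$, where $\mu$ is the Milnor number (equal to the number of $(-2)$-curves in the resolution, hence the rank of the corresponding root subsystem), and our family is induced from it by a classifying map $(B,t_0)\to \mathbb C^{\mu}$. The key point to establish is that this classifying map is \emph{submersive} (a smooth surjection on tangent spaces), for then the family pulled back along it has the \emph{same} monodromy image as the miniversal family, namely $W_{x_0}$, and \eqref{MonActLoc} surjects onto $W_{x_0}$; combined with the fact that the monodromy of any deformation factors through $W_{x_0}$ (since the vanishing cohomology is built from the same vanishing lattice), this gives the isomorphism $G_{t_0,x_0}\cong W_{x_0}$.

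The main obstacle, and the crux of the argument, is therefore to verify this versality, i.e.\ that moving the hyperplane produces enough independent first-order deformations to span the tangent space $T^1$ of the singularity (the Tjurina/Milnor algebra, of dimension $\mu$). I would prove this by a dimension-and-transversality count: the cubic threefold $X$ is \emph{fixed}, and the deformations come only from the $4$-dimensional family $(\mathbb P^4)^*$ of hyperplanes, so a priori one only gets a $4$-dimensional family of first-order deformations, which cannot span $T^1$ when $\mu>4$. The resolution is that versality of the \emph{monodromy} (as opposed to versality of the deformation itself) requires only that the image of the classifying map be transverse to the discriminant in a way that surrounds $t_0$ and picks up all vanishing cycles — equivalently, that the image meets every local branch of the discriminant $X^*$ and the induced loops generate all the Picard--Lefschetz reflections. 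I would argue this by reducing a general ADE singularity to a sum of nodes via a small perturbation inside the hyperplane family: after a generic deformation of the hyperplane, the singularity $x_0$ breaks up into $\mu$ ordinary nodes (this is the content of Example~\ref{Exa_CubicNode} applied locally, together with the fact that the discriminant of hyperplane sections is a hypersurface whose general point is a one-nodal section), and the vanishing cycles of these nodes form a distinguished basis whose associated reflections generate $W_{x_0}$.

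Concretely, the steps in order are: (1) identify the germ of $p$ at $x_0$ as a deformation of the ADE germ and realize it via a classifying morphism to the miniversal base; (2) invoke the Arnold--Gabrielov computation (Lemma~\ref{GabrielovLem}) that the miniversal monodromy is $W_{x_0}$; (3) show the monodromy image of \eqref{MonActLoc} is \emph{contained} in $W_{x_0}$ because every vanishing cycle is a $(-2)$-class and the monodromy acts by Picard--Lefschetz reflections preserving the vanishing lattice and the intersection form; (4) show the image is \emph{all} of $W_{x_0}$ by degenerating $x_0$ into $\mu$ nodes through the hyperplane family and checking that the resulting simple loops in $B^{\textup{sm}}$ realize a full distinguished system of vanishing cycles, so their reflections generate $W_{x_0}$. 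I expect step~(4) to be the delicate one, since it demands that the finite-dimensional hyperplane family already sees a complete system of simple vanishing cycles for the singularity; I would handle it by appealing to the irreducibility and genericity properties of the dual variety $X^*$ near its smooth (one-nodal) locus together with the local normal form for ADE degenerations, rather than by an explicit coordinate computation.
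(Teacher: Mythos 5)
Your overall architecture (containment of $G_{t_0,x_0}$ in $W_{x_0}$, plus generation by breaking $x_0$ into nodes inside the hyperplane family) matches the paper's strategy, and you correctly flag the real subtlety: naive versality fails dimensionally, since $\dim(\mathbb P^4)^*=4<\mu$ for singularities of type $A_5$, $D_5$, $E_6$. But your step (4) --- which you yourself identify as the crux --- has a genuine gap. You assert that a generic pencil of hyperplanes through $t_0$ breaks the singularity into $\mu$ ordinary nodes \emph{and} that the resulting vanishing cycles form a distinguished system whose reflections generate $W_{x_0}$, and you propose to obtain this from ``irreducibility and genericity properties of $X^*$'' together with a ``local normal form.'' Neither tool delivers it. Genericity of the pencil in $(\mathbb P^4)^*$ does not translate into genericity of its image in the miniversal base $\mathbb C^\mu$, because the classifying map $(B,t_0)\to(\mathbb C^\mu,0)$ is exactly the non-generic, non-submersive map whose existence you pointed out; a priori its image could be tangent to, or wind degenerately around, the miniversal discriminant. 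So both the $\mu$-node count (i.e., that the local multiplicity of $X^*$ at $t_0$ is $\mu$) and, more seriously, the distinguished-system property are unproved. Note also that a count alone can never suffice: $\mu$ vanishing cycles can repeat or span a proper sublattice, so ``the reflections generate $W_{x_0}$'' does not follow from having $\mu$ of them.

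What fills this hole in the paper is an explicit analytic computation (Lemma \ref{analyMilLem}): writing $X$ in affine coordinates as $F=f+wQ+w^2L+w^3\sigma$ and using smoothness of $X$ at $x_0$ (hence $Q(0,0,0)\neq 0$) to divide by a unit, the restriction of the hyperplane family to the $2$-plane spanned by a general pencil direction $\lambda$ and the constant direction $w$ is analytically equivalent to $\{f_\lambda(x_1,x_2,x_3)+w=0\}$. This exhibits, inside the $4$-dimensional hyperplane family, a genuine ``Morsification plus level set'' family of the germ $f$, which is precisely the setting where Lemma \ref{GabrielovLem} (together with the independence of the monodromy group from the choice of Morsification) yields the full Weyl group. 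The paper also first reduces to a single singularity (a codimension-two argument) and then to a pencil by a local Zariski-type theorem; that surjection on $\pi_1$ simultaneously gives the upper bound $G_{t_0,x_0}\subseteq W_{x_0}$, for which your separate lattice-theoretic containment in step (3) is an acceptable substitute. To complete your proof you need the analogue of Lemma \ref{analyMilLem}: the missing input is not a genericity property of the dual variety but the smoothness of $X$ at $x_0$, which is what guarantees that the $w$-pencil induces the deformation $f+w\cdot(\text{unit})$ and hence that the hyperplane family realizes the classical Milnor-fibration setup for $\eqref{MonActLoc}$.
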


To prove Proposition \ref{LocMonGpProp}, we need to use the Milnor fiber theory. One refers to \cite{Durfee} for a more detailed survey. 

\subsection{Monodromy Group on Milnor Fiber}

Let $f(x_1,...,x_n)=0$ be a hypersurface in $\mathbb C^n$ with an isolated singularity at $0$, then the Milnor fiber $F$ of $f$ is the $\{f=w\}\cap B^n$ for a ball $B^n$ around origin of small radius and $w \in \mathbb C$ with a small magnitude. $F$ has homotopy type of a bouquet of $\mu$ spheres of dimension $n-1$, where $\mu$ is the Milnor number of the singularity, which coincides with the dimension of $\mathbb C$-vector space $\mathbb C[x_1,...,x_n]/(\frac{\partial f}{\partial x_1},...,\frac{\partial f}{\partial x_n})$.

A \textit{deformation} of $f$ is an analytic function $$g(x_1,...,x_n,w):\mathbb C^n\times \mathbb C\to \mathbb C$$ such that $g(x_1,...,x_n,0)=f(x_1,...,x_n)$, and $\tilde{f}(x_1,...,x_n)=g(x_1,...,x_n,1)$ is called a \textit{perturbation} of $f$. 
There is a perturbation $\tilde{f}$ of $f$ such that $\tilde{f}$ is a Morse function in the sense that all critical values of $\tilde{f}$ are distinct and all critical points are nondegenerate. There are exactly $\mu$ critical points ${t_1,...,t_{\mu}}$ of $\tilde{f}$ and they are contained in a $\delta$-neighborhood $D_{\delta}$ of $0$ in $\mathbb C$. The Milnor fibers of $f$ and $\tilde{f}$ are diffeomorphic.

We can choose a base point $t'\in D_{\delta}-\{t_1,...,t_{\mu}\}$ and paths $p_i, 1\le i\le \mu$ connecting $t'$ to $t_i$ such that its interior is contained in $D_{\delta}-\{t_1,...,t_{\mu}\}$. We define a loop $l_i$ based at $t'$ where $l_i$ goes around $t_i$ anticlockwise along a small circle centered at $t_i$ and is connected by $p_i$. The loops $l_1,...,l_{\mu}$ generate the fundamental group $\pi_1(D_{\delta}-\{t_1,...,t_{\mu}\},t')$. The loop $l_i$ induces monodromy action on the cohomology of fiber $H^{n-1}(F,\mathbb Z)$ given by the Picard-Lefschetz formula
$$T_i:\alpha\mapsto \alpha+(\alpha,\delta_i)\delta_i,$$
where $\delta_i$ is the vanishing cycle associated to the critical value $t_i$. The set of all vanishing cycles $\{\delta_i\}_{i=1}^{\mu}$ generates $H^{n-1}(F,\mathbb Z)$.  When $n$ is odd, $(\delta_i,\delta_i)=\pm 2$, while when $n$ is even, $(\delta_i,\delta_i)=0$.

\begin{definition}\normalfont
We define \textit{monodromy group} of the Milnor fiber of $f$ to be the subgroup of $\textup{Aut}H^{n-1}(F,\mathbb Z)$ generated by $T_1,...,T_{\mu}$.
\end{definition}
The monodromy group is independent of the choice of perturbation function and the loops $l_1,...,l_{\mu}$. Moreover, in the case where $n=3$ and $f(x_1,x_2,x_3)=0$ has ADE singularity at the origin, the following result is well known.
\begin{lemma}  (\cite[p.99]{Arnold},  \cite{GZ})\label{GabrielovLem}
Vanishing cycles $\delta_1,...,\delta_{\mu}$ can be chosen to form a basis of the root system of the corresponding ADE type in $H^2(F,\mathbb Z)$. The monodromy group of $f$ is the Weyl group corresponding to the type of singularity.
\end{lemma}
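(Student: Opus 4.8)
The plan is to separate the statement into a lattice-theoretic claim and a group-theoretic claim, and to observe that the group claim is formal once the lattice claim is in hand. The lattice claim is that the vanishing cycles $\delta_1,\dots,\delta_\mu$ can be chosen so that their intersection matrix $\big((\delta_i,\delta_j)\big)$ is the negative of the Cartan matrix of the root system of the corresponding ADE type, so that they form a system of simple roots spanning $H^2(F,\Z)$. Granting this, the group claim follows from the standard theory of root systems: since $n=3$ each $\delta_i$ satisfies $(\delta_i,\delta_i)=-2$, so each Picard--Lefschetz operator $T_i\colon \alpha\mapsto \alpha+(\alpha,\delta_i)\delta_i$ is the reflection $s_{\delta_i}$ (one checks $T_i(\delta_i)=-\delta_i$ and that $T_i$ has order two precisely because $(\delta_i,\delta_i)=-2$), and the subgroup of $\Aut H^2(F,\Z)$ generated by the simple reflections of a root basis is exactly the Weyl group of that type (cf. \cite[Chapter III]{Hum}). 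Thus the real content is the lattice claim.

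First I would pin down the numerology. Writing each ADE surface singularity in its normal form (for instance $x^2+y^2+z^{k+1}$ for $A_k$, $x^2+y^2z+z^{k-1}$ for $D_k$, and the three exceptional equations for $E_6,E_7,E_8$), a direct computation of $\dim_{\C}\C[x,y,z]/(\partial f/\partial x,\partial f/\partial y,\partial f/\partial z)$ shows that the Milnor number $\mu$ equals the rank of the named root system, i.e. the number of vertices of its Dynkin diagram. Since $n=3$ is odd, each vanishing $2$-sphere $\delta_i$ has self-intersection $-2$, so the $\delta_i$ are legitimate candidate roots and the intersection form is symmetric and (being negative definite for these singularities) of the right signature.

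The heart of the matter is to compute the off-diagonal numbers $(\delta_i,\delta_j)$ for a suitably chosen distinguished basis and see the ADE Dynkin diagram emerge. I would do this by a real morsification: following Gusein--Zade and A'Campo (\cite{GZ}), one perturbs $f$ to a real Morse function, chooses a distinguished system of paths to the critical values, and reads off the intersection matrix of the associated distinguished basis combinatorially from the real picture, verifying that it is the Cartan matrix of the claimed type. Alternatively --- and more in the spirit of Example \ref{Exa_CubicNode} --- one can invoke Brieskorn's simultaneous resolution of rational double points: the Milnor fiber $F$ of an ADE surface singularity is diffeomorphic to its minimal resolution, under which $H^2(F,\Z)$ is identified with the span of the exceptional $(-2)$-curves whose dual graph is, by definition, the ADE Dynkin diagram, and the vanishing cycles correspond to these curves. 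Either route produces the required intersection matrix.

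The main obstacle is exactly this intersection-matrix computation, that a distinguished basis of vanishing cycles realizes the Dynkin diagram --- equivalently, making precise and exploiting the diffeomorphism between the Milnor fiber and the minimal resolution together with the identification of the vanishing cycles with the exceptional $(-2)$-curves. The Milnor-number count and the reduction to Weyl-group theory are then routine. Finally I would recall that the monodromy group is independent of the choice of perturbation and of the loops $l_1,\dots,l_\mu$, so that the resulting identification of the monodromy group with the Weyl group is canonical.
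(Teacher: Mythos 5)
The paper offers no proof of this lemma at all: it is quoted as a classical result, attributed to Arnold (p.~99) and Gusein--Zade, with only the following remark that the required basis is produced by Gabrielov operations (conjugations of the system of paths $\{p_i\}$). So your proposal is not competing with an internal argument, and as a reconstruction of the cited literature it is essentially correct: the split into a lattice claim (a basis of vanishing cycles whose intersection matrix is the negative Cartan matrix) plus the formal observation that Picard--Lefschetz operators in square-$(-2)$ classes are reflections generating the Weyl group is exactly how the result is organized in the sources the paper cites, and your morsification route is the same circle of ideas as the paper's allusion to Gabrielov operations on distinguished bases. Two caveats are worth recording. First, the real-morsification method of A'Campo and Gusein--Zade applies to plane-curve singularities $f(x,y)$; for the surface case one writes the ADE equation as $g(x,y)+z^2$ and invokes Gabrielov's stabilization theorem to transport the Dynkin diagram of $g$ to that of $f$ --- your sketch elides this step, though the cited references contain it. Second, your alternative route via Brieskorn's simultaneous resolution is the weaker of the two: the diffeomorphism of the Milnor fiber with the minimal resolution identifies the lattices, but to conclude anything about the \emph{monodromy group} you must know that the exceptional $(-2)$-classes are actually realized as vanishing cycles along paths, so that the reflections in them are monodromy operators; this is precisely what you flag as the ``main obstacle,'' and it is genuinely nontrivial, whereas the distinguished-basis/morsification route avoids it because a distinguished basis consists of vanishing cycles by construction.
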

These vanishing cycles are obtained by a sequence of conjugation operations of paths $\{p_i\}_{i=1}^{\mu}$. Such operations are called Gabrielov operations.

Now let $S_0$ be the cubic surface arising from a hyperplane section of $X$ with an affine chart defined by $f(x_1,x_2,x_3)=0$ with an isolated singularity at $(0,0,0)$ of ADE type. The next result will show that deforming $f$ in the family of hyperplane sections is the "same" as considering the Milnor fiber theory of $f$.

\begin{lemma}\label{analyMilLem}
Choose a linear 2-dimensional hyperplane sections family parameterized by $(\lambda,w)\in \mathbb C^2$ with $(0,0)$ corresponds to $f(x_1,x_2,x_3)=0$ with an ADE singularity, then there is an $\varepsilon>0$ such that for all $\lambda,w$ with $|\lambda|,|w|<\varepsilon$, an affine chart of the total family has analytic equation
\begin{equation}
    f_{\lambda}(x_1,x_2,x_3)+w=0,\label{analyMilEqu}
\end{equation}
where $f_{\lambda}(x_1,x_2,x_3)$ is the affine equation of the hyperplane section at $(\lambda,0)$.
\end{lemma}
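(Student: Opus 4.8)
The plan is to reduce the global geometry of the family of hyperplane sections to a local normal form by exploiting that $X$ itself is \emph{smooth} at the singular point $x_0$ of $X_{t_0}$. First I would fix homogeneous coordinates $[Z_0:\cdots:Z_4]$ on $\mathbb{P}^4$ so that $x_0=[1:0:0:0:0]$ and the embedded tangent hyperplane $T_{x_0}X=\{Z_4=0\}$. Note that since $X$ is smooth at $x_0$ and $X_{t_0}=X\cap H_{t_0}$ acquires an (isolated) ADE singularity there, the hyperplane must be the tangent one, $H_{t_0}=T_{x_0}X$; this is what pins down $\{Z_4=0\}$. Passing to the affine chart $Z_0=1$ with coordinates $z_i=Z_i/Z_0$, the cubic threefold $X$ is a smooth hypersurface $\{F(z_1,z_2,z_3,z_4)=0\}$ with $F(0)=0$ and $dF|_0$ proportional to $dz_4$.

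Second, I would apply the holomorphic implicit function theorem to solve $F=0$ for the transverse coordinate $z_4$: there is a polydisc $\Delta^3$ and an analytic function $\psi(z_1,z_2,z_3)$ with $\psi(0)=0$, $d\psi|_0=0$, such that over $\Delta^3$ the threefold is the graph $X=\{z_4=\psi(z_1,z_2,z_3)\}$. Hence $(z_1,z_2,z_3)$ are honest local analytic coordinates on $X$ near $x_0$, chosen once and for all independently of any parameter, and the section by $H_{t_0}=\{z_4=0\}$ is exactly $\{\psi=0\}$; that is, $f(z_1,z_2,z_3)=\psi(z_1,z_2,z_3)$ is the given ADE equation. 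A general hyperplane near $H_{t_0}$ can be written as $\{Z_4=\mu_0 Z_0+\mu_1 Z_1+\mu_2 Z_2+\mu_3 Z_3\}$, and substituting $z_4=\psi$ shows that $X\cap H_\mu$ is cut out on $X$ by $g_\mu(z)=\psi(z)-\mu_1 z_1-\mu_2 z_2-\mu_3 z_3-\mu_0$, which is linear in the parameters $(\mu_0,\mu_1,\mu_2,\mu_3)$.

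Third, I would choose the linear $2$-plane in the space of hyperplanes so that the two parameters act by $\mu_0=-w$ and $(\mu_1,\mu_2,\mu_3)=\lambda v$ for a fixed vector $v$. Then $g_{\lambda,w}(z)=\bigl(\psi(z)-\lambda(v_1 z_1+v_2 z_2+v_3 z_3)\bigr)+w=f_\lambda(z)+w$, where $f_\lambda=g_{\lambda,0}$ is precisely the section at $(\lambda,0)$, as required. The placement of the ``$+w$'' as a pure additive constant is exactly the statement that the $w$-direction of the chosen $2$-plane is the value-shift direction $\mu_0$, whose hyperplanes avoid $x_0$ for $w\neq 0$, matching the standard Milnor picture $\{f=w\}$.

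The only point requiring genuine care, and the step I expect to be the main (if modest) obstacle, is the uniformity of $\varepsilon$ together with the fact that the coordinates $(z_1,z_2,z_3)$ are truly fixed, independent of $(\lambda,w)$. This is what makes the total family a \emph{deformation} $f_\lambda+w$ in the Milnor-theoretic sense rather than merely a fiberwise analytic isomorphism, and it is precisely what the later Milnor-fiber and Gabrielov arguments (Proposition \ref{LocMonGpProp}) require. It follows because $\psi$ is analytic on the single fixed polydisc $\Delta^3$, so the substitution $z_4=\psi$ and the resulting equation $f_\lambda+w=0$ remain valid on a common domain, shrinking the allowable parameter range only by a controlled amount; taking $\varepsilon>0$ smaller than the corresponding radii yields the claim for all $|\lambda|,|w|<\varepsilon$.
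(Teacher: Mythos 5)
Your proof is correct, and it reaches the same normal form by a cleaner route than the paper. Both arguments turn on the same essential input --- smoothness of $X$ at the singular point $x_0$ of the section, entering for you through $\partial F/\partial z_4|_{x_0}\neq 0$ and in the paper through the observation that the coefficient $Q$ in the expansion $F=f+wQ+w^2L+w^3\sigma$ satisfies $Q(0,0,0)\neq 0$ --- and both work with the same kind of adapted $2$-plane of hyperplanes (a tilt direction $\lambda$ plus the translation direction $w$). The difference is in execution: the paper keeps the restricted cubic polynomials $f_\lambda=F(x,\lambda l)$ as defining equations, writes $F=f+wG$ with $G$ a unit, and after the two-parameter computation lands on $f_\lambda/G'+w=0$, which it then declares ``analytically equivalent'' to $f_\lambda+w=0$ without exhibiting the equivalence; you instead absorb the unit once and for all via the implicit function theorem, presenting $X$ locally as the graph $z_4=\psi(z_1,z_2,z_3)$, whence the total family is \emph{exactly} $\psi(z)-\lambda(v_1z_1+v_2z_2+v_3z_3)+w=0$ in coordinates manifestly independent of $(\lambda,w)$ --- which is precisely what the later monodromy argument (Proposition \ref{LocMonGpProp} via Lemma \ref{GabrielovLem}) needs. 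The one caveat is the mild abuse in your second step: $\psi$ is not the given cubic equation $f$ but a unit multiple of it (one has $F=u\cdot(z_4-\psi)$ with $u$ a unit, so $f=-u(\cdot,0)\,\psi$), hence $f_0=\psi$ cuts out the same germ with the same ADE type but is not literally $f$. Since the lemma only asks for \emph{an} analytic equation of an affine chart of the family, and since Milnor fibrations and their monodromy are insensitive to unit factors, this is harmless --- and the paper's own proof carries the identical ambiguity, merely relocated into its unproved equivalence claim.
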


\begin{proof}
$f(x_1,x_2,x_3)=0$ is an affine cubic surface with an isolated singularity at $(0,0,0)$ of ADE type. Using $x_1,x_2,x_3,w$ as affine coordinates, the cubic threefold $X$ has equation
$$F(x_1,x_2,x_3,w)=f(x_1,x_2,x_3)+wQ(x_1,x_2,x_3)+w^2L(x_1,x_2,x_3)+w^3\sigma,$$
where $Q, L,\sigma$ are polynomials of degree $2,1$, and $0$, respectively. 

Projecting to the $w$-coordinate, we obtain a pencil 
\begin{equation}
   \mathcal{X}\to \mathbb C \label{pencil} 
\end{equation}
of hyperplane sections $f_w(x_1,x_2,x_3)=F(x_1,x_2,x_3,w)$ of $X$ through $f(x_1,x_2,x_3)=0$. Since the cubic threefold is smooth, $Q(0,0,0)\neq 0$. Therefore, the equation of cubic threefold is 
$$F(x_1,x_2,x_3,w)=f(x_1,x_2,x_3)+wG(x_1,x_2,x_3,w)=0,$$
where $G(x_1,x_2,x_3,w)$ is quadratic and is non-vanishing in a small neighborhood $D$ of $0$. Therefore, by restricting to $D$ and setting $g=f/G$, we get a family
$$g(x_1,x_2,x_3,w)+w=0,$$
which is analytically equivalent to the family $(\ref{pencil})$ restricted to $D$. 

Now we choose a perturbation of $f$ in the hyperplane section family transversal to the $w$ direction. In other words, we choose a linear function $$l=ax_1+bx_2+cx_3$$
with $a,b,c\in \mathbb C$ being general, then 
$$f_{\lambda}(x_1,x_2,x_3)=F(x_1,x_2,x_3,\lambda l), ~\lambda\in \mathbb C$$
is a pencil of hyperplane sections through $f$. We consider the two-dimensional family spanned by $l$ and $w$. Then for $(l,w)\in \mathbb C^2$, the hyperplane section at $\lambda l+w$ is defined by
\begin{equation}
    f_{\lambda,w}=F(x_1,x_2,x_3,\lambda l+w)=f_{\lambda}(x_1,x_2,x_3)+wG(x_1,x_2,x_3,w+\lambda l)+\lambda lH(x,y,z,w),\label{perturb}
\end{equation}
where $H(x,y,z,w)=G(x,y,z,w+\lambda l)-G(x,y,z,\lambda l)=wL(x_1,x_2,x_3)+(2w\lambda l+w^2)\sigma$ is divisible by $w$. 

Therefore, denote $G'=G+\lambda lH/w$, we can express the two dimensional family $(\ref{perturb})$ as 
$$\frac{f_{\lambda}(x_1,x_2,x_3)}{G'}+w=0,$$
in a small neighborhood $D^2$ of origin. It is analytically equivalent to the family
$$f_{\lambda}(x_1,x_2,x_3)+w=0.$$
\end{proof}

\noindent\textit{Proof of Proposition \ref{LocMonGpProp}}.
Let $\Sigma_0$ be the discriminant locus $x_0$, namely the locus $\{t\in B|p^{-1}(t)\cap D_0~\textup{is singular}\}$. $\Sigma_0\subseteq X^{\vee}\cap B$ is an irreducible component (when $X_0$ has only one isolated singularity, they are the same).

Since the complement of the inclusion $B^{\textup{sm}}\subseteq B\setminus \Sigma_0$ has real codimension at least two, there is a surjection
$$\pi_1(B^{\textup{sm}},t')\twoheadrightarrow \pi_1(B\setminus \Sigma_0,t'),$$
where $t'$ is a fixed base point. Therefore, one reduces to the case where $S_0$ has only one singularity and $\Sigma_0= X^{\vee}\cap B$.

We choose a general line $\mathbb L$ in $(\mathbb P^4)^*$ through $t'$ such that $\mathbb L$ intersect $\Sigma_0$ transversely at smooth points, then $U=B^{\textup{sm}}\cap \mathbb L$ is an analytic open space. Moreover, by a local version of Zariski's theorem on fundamental groups on a Lefschetz pencil \cite[Theorem 3.22]{Voisin2}, there is a surjection
\begin{equation}
    \pi_1(U,t')\twoheadrightarrow \pi_1(B^{\textup{sm}}, t'). \label{pi1Surj}
\end{equation}

Therefore it suffices to show that the monodromy representations generated by the loops in the 1-dimensional open space $U$ is the entire Weyl group.

On the other hand, by Lemma \ref{analyMilLem}, the hyperplane sections parameterized by $U$ are analytically equivalent to the family
$$f'(x_1,x_2,x_3)+w=0,$$
where $f'$ is the defining equation of the hyperplane section at $t'$ and is a perturbation of $f$. Therefore, by Lemma \ref{GabrielovLem}, the monodromy group induced by $\pi_1(U,t')$ is the Weyl group corresponding to the type of $x_0$.
\qed

\subsection{Globalization}
So far, we have been discussing local monodromy group around a single singularity on a cubic surface $X_{t_0}=X\cap H_{t_0}$ with ADE singularities. We want to relate the local monodromy group $G_{t_0,x_0}$ on cohomology of Milnor fiber around a singularity $x_0$ of $X_{t_0}$ to the monodromy group $G_{t_0}$ on the cohomology of nearby smooth hyperplane section $X_t$.
\begin{proposition}\label{MilInjProp}
The inclusion $F\hookrightarrow X_t$ of Milnor fiber induces an injection on homology
\begin{equation}
    H_2(F,\mathbb Z)\to H_2(X_t,\mathbb Z)_{\textup{van}}. \label{MilFibInjEqn}
\end{equation}
\end{proposition}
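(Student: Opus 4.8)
The plan is to deduce injectivity from the nondegeneracy of the intersection form on the Milnor lattice $H_2(F,\Z)$ together with the compatibility of intersection pairings under the inclusion $i\colon F\hookrightarrow X_t$. By Lemma \ref{GabrielovLem}, the vanishing cycles $\delta_1,\dots,\delta_\mu$ form a basis of $H_2(F,\Z)$ realizing the root lattice of the corresponding ADE type; since the Milnor fiber sits in a threefold chart ($n=3$), the self-intersections are $(\delta_i,\delta_i)=-2$ and the Gram matrix of $(\cdot,\cdot)$ on $H_2(F,\Z)$ is the negative of the associated Cartan matrix. This form is negative definite, hence nondegenerate, and this is the only feature of $F$ we ultimately exploit.

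First I would verify that the map \eqref{MilFibInjEqn} is even well-defined, i.e. that $i_*H_2(F,\Z)$ lands in the vanishing homology $H_2(X_t,\Z)_{\van}=\ker(H_2(X_t,\Z)\to H_2(X,\Z))$. Each $\delta_i$ is represented by an embedded $2$-sphere supported in the small ball $D_0$ around the singular point $x_0$; since $D_0$ is contractible, its image in $X$ is null-homotopic, so the composite $H_2(F,\Z)\to H_2(X_t,\Z)\to H_2(X,\Z)$ vanishes. Thus the image does lie in $H_2(X_t,\Z)_{\van}$ and \eqref{MilFibInjEqn} is defined.

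The core step is the compatibility of intersection forms: for all $\alpha,\beta\in H_2(F,\Z)$ one has $(i_*\alpha,i_*\beta)_{X_t}=(\alpha,\beta)_F$. Because $F=X_t\cap D_0$ is an open subset of the closed surface $X_t$, we may represent $\alpha$ and $\beta$ by transverse $2$-cycles supported in $F$; the intersection number computed in $X_t$ is the signed count of intersection points, all of which lie in $F$, and each local index coincides with the one computed intrinsically on $F$. Hence $i_*$ is an isometry from $\bigl(H_2(F,\Z),(\cdot,\cdot)_F\bigr)$ onto its image in $H_2(X_t,\Z)$. Injectivity is then immediate: if $i_*\alpha=0$ in $H_2(X_t,\Z)_{\van}$, then $(\alpha,\beta)_F=(i_*\alpha,i_*\beta)_{X_t}=0$ for every $\beta\in H_2(F,\Z)$, and nondegeneracy of $(\cdot,\cdot)_F$ forces $\alpha=0$.

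The main obstacle is making the isometry step fully rigorous, since $F$ is a manifold with boundary and its intersection form is a priori the pairing $H_2(F,\Z)\times H_2(F,\partial F;\Z)\to\Z$ rather than a self-pairing on $H_2(F,\Z)$. One must check that the cycle representatives can be chosen away from $\partial F$ and, crucially, that the definite form furnished by Lemma \ref{GabrielovLem} is literally the restriction of the intersection form of the ambient surface $X_t$. Invoking the locality of the intersection index together with the standard identification of the Milnor lattice of an ADE singularity with the negative-definite root lattice resolves this, and no further computation is needed.
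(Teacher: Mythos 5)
Your proof is correct, and it takes a genuinely different route from the paper's. The paper proves Proposition \ref{MilInjProp} via Brieskorn--Artin simultaneous resolution: after a finite base change $B'\to B$ the family admits a simultaneous resolution $\mathcal{X}'\to B'$ which is a proper submersion; Ehresmann's theorem then identifies the Milnor fiber $F\subseteq X_t$ diffeomorphically with a regular neighborhood $U$ of the exceptional $(-2)$-curve configuration inside the central fiber $g^{-1}(0)\cong\tilde{X}_0$, and injectivity of $H_2(U,\Z)\to H_2(\tilde{X}_0,\Z)$ follows because the exceptional curve classes are linearly independent there. Your argument bypasses the resolution entirely, using only the locality of intersection numbers (so that $i_*$ is an isometry of the Milnor lattice onto its image in $H_2(X_t,\Z)$) together with the negative definiteness of the ADE Milnor lattice supplied by Lemma \ref{GabrielovLem}; the identification of $F$ with the abstract Milnor fiber of the local equation, needed to apply that lemma, is exactly what Lemma \ref{analyMilLem} provides. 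What your approach buys: it is more elementary (no base change, no resolutions in the category of algebraic spaces), and it transparently explains the Remark following the proposition --- for the elliptic cone singularity the Milnor lattice is degenerate ($\mu=8$ while the form has rank $6$), which is precisely why injectivity fails there. What the paper's approach buys: the simultaneous resolution identifies the image of $H_2(F,\Z)$ with the sublattice spanned by the effective roots, i.e.\ the exceptional $(-2)$-curves of the minimal resolution, and it is this geometric identification, not just injectivity, that is exploited in the rest of Section \ref{Section_Cubic3fold_Boundary} (Corollary \ref{LocGloMonoGrpCor} and Proposition \ref{MonoActGloProp}) to match the local monodromy group with the Weyl subgroup $W(R_e)$. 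It is worth noting that the two proofs rest on the same algebraic fact in the end: the paper's final step (linear independence of the exceptional curve classes) is the negative definiteness of their intersection matrix in disguise, which is the nondegeneracy you invoke directly.
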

\begin{proof}
This is due to Brieskorn's theory \cite{Brieskorn} and its globalization \cite{Artin} (also see \cite{KM}, Theorem 4.43). Using the same notations as we introduced at the beginning of this section, there exists a finite cover $B'\to B$, such that the base-changed total family admits simultaneous resolution in the category of algebraic spaces. In other words, there is a commutative diagram as follows.

\begin{figure}[h]
    \centering
\begin{tikzcd}
\mathcal{X}'\arrow[dr, "g"] \arrow[r,"f"] &  \mathcal{X}\times_{B}B' \arrow[d]\arrow[r] &\mathcal{X} \arrow[d]\\
  & B'\arrow[r] &B
\end{tikzcd}
\end{figure}
$\mathcal{X}'$ is a complex analytic manifold, $f$ is bimeromorphic, and $g$ is a proper holomorphic submersion. (The resolution is in general not algebraic since the local gluing data is only analytic.)

$\mathcal{X}'\to B'$ is diffeomorphic to the product $X_t\times B'$ by Ehresmann's theorem, so the Milnor fiber $F=X_t\cap D_0$ is diffeomorphic to an open set $U$ of the central fiber $g^{-1}(0)$. The argument reduces to show that the homology group induced by the inclusion $U\hookrightarrow g^{-1}(0)$ is injective.

$g^{-1}(0)$ is isomorphic to the minimal resolution $\tilde{X}_0$ of $X_0$. Denote $V$ the exceptional curve in $\tilde{X}_0$ over $x_0$. Then $V$ is a bunch of $\mu$ (-2)-curves, and corresponds to a connected sub-diagram of the Dynkin diagram of $\mathbb E_6$. Since the image of $U$ in $X_0$ is a neighborhood of $x_0$, $U$ is a regular neighborhood of $V$. So the induced map $H_2(U,\mathbb Z)\to H_2(\tilde{X}_0,\mathbb Z)$ is injective.
\end{proof}

\begin{remark}\normalfont
Proposition \ref{MilInjProp} is false for elliptic singularity, since the Milnor number of such a singularity is 8, while the vanishing homology on nearby $X_t$ has rank 6.
\end{remark}

\begin{corollary} \label{LocGloMonoGrpCor}
Via the inclusion \eqref{MilFibInjEqn}, $H_2(F,\mathbb Z)$ becomes an irreducible sub $\pi_1(B^{\textup{sm}},t')$-representation of $H_2(X_t,\mathbb Z)_{\textup{van}}$. It induces an inclusion $$G_{t_0,x_0}\hookrightarrow G_{t_0}$$ from the monodromy group $G_{t_0,x_0}$ on cohomology of Milnor fiber of singularity $x_0$ to the monodromy group $G_{t_0}$ around $X_{t_0}$ \eqref{eqn_LocMonodromyAction-General}.
\end{corollary}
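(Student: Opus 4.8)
The plan is to deduce both assertions from the $\pi_1(B^{\mathrm{sm}},t')$-equivariance of the homology inclusion \eqref{MilFibInjEqn} together with the Picard--Lefschetz description of $G_{t_0}$. First I would verify that the map $\iota\colon H_2(F,\Z)\to H_2(X_t,\Z)_{\van}$ of Proposition \ref{MilInjProp} is a morphism of local systems over $B^{\mathrm{sm}}$, and hence $\pi_1(B^{\mathrm{sm}},t')$-equivariant. This is because the Milnor fibration $p^{\mathrm{sm}}\colon D_0^{\mathrm{sm}}\to B^{\mathrm{sm}}$ is the restriction of the ambient family to the tube $D_0$, so the fiberwise inclusion $F\hookrightarrow X_t$ is a map of fiber bundles over $B^{\mathrm{sm}}$ and induces a map of the associated homology local systems. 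Consequently $L:=\iota\bigl(H_2(F,\Z)\bigr)$ is a $\pi_1(B^{\mathrm{sm}},t')$-subrepresentation of $H_2(X_t,\Z)_{\van}$.

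For irreducibility, the action of $\pi_1(B^{\mathrm{sm}},t')$ on $L$ factors through $G_{t_0,x_0}$, which by Proposition \ref{LocMonGpProp} is the Weyl group $W_{x_0}$ of the ADE type of $x_0$. Since $x_0$ is a single isolated singularity, its vanishing cycles span the root lattice of a connected Dynkin diagram, and the reflection representation of a simple Weyl group is irreducible over $\mathbb Q$; this yields the irreducibility claim for $L$.

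For the inclusion $G_{t_0,x_0}\hookrightarrow G_{t_0}$ I would argue at the level of reflections. Let $\delta_1,\dots,\delta_\mu\in L$ be the vanishing cycles of $x_0$; the associated Picard--Lefschetz reflections $T_{\delta_i}$ act on $H_2(X_t,\Z)_{\van}$ and generate a subgroup $G'\le G_{t_0}$. Each $T_{\delta_i}$ preserves $L$ and restricts there to a generator of $G_{t_0,x_0}$, so restriction to $L$ defines a surjection $G'\twoheadrightarrow G_{t_0,x_0}$. The key lattice-theoretic point is that for a root $\delta\in L$ and any $v\in L^\perp$ one has $(v,\delta)=0$, whence $T_\delta(v)=v$; thus every element of $G'$ fixes $L^\perp$ pointwise. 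Because $H_2(X_t,\Z)_{\van}\otimes\mathbb Q=(L\otimes\mathbb Q)\oplus(L^\perp\otimes\mathbb Q)$ is a $G'$-invariant decomposition, an element of $G'$ acting trivially on $L$ acts trivially everywhere, so the restriction $G'\to\Aut(L)$ is injective. Therefore $G'\xrightarrow{\sim}G_{t_0,x_0}$, and composing the inverse isomorphism with the inclusion $G'\hookrightarrow G_{t_0}$ produces the desired embedding.

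The hard part will be the first step: making precise that \eqref{MilFibInjEqn} is equivariant and, in particular, that the vanishing cycles of the Milnor fiber of $x_0$ are carried by $\iota$ exactly onto the roots $\delta_i$ along which the relevant Picard--Lefschetz reflections of $G_{t_0}$ are taken. Once this compatibility is established, the faithfulness of $G'\to\Aut(L)$ reduces to the elementary observation that reflections along roots of a sublattice fix its orthogonal complement pointwise, and both conclusions follow.
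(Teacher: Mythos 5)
Your proof is correct, and it is worth noting that the paper never writes a proof of this corollary at all: it is stated as if immediate from Proposition \ref{MilInjProp}, with the proof deferred in effect to the ingredients of Proposition \ref{MonoActGloProp}. The substantive point you add is the splitting in your third step, and it is genuinely needed: an equivariant injection $\iota\colon H_2(F,\mathbb Z)\hookrightarrow H_2(X_t,\mathbb Z)_{\textup{van}}$ only gives, for free, the restriction homomorphism $G_{t_0}\to \Aut(L)$ with $L=\iota\bigl(H_2(F,\mathbb Z)\bigr)$, whose image is $G_{t_0,x_0}$; that presents $G_{t_0,x_0}$ as an image (a quotient) of $G_{t_0}$, not as a subgroup, so the claimed inclusion $G_{t_0,x_0}\hookrightarrow G_{t_0}$ requires a section of this restriction map over its image. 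Your subgroup $G'\le G_{t_0}$ generated by the Picard--Lefschetz reflections along the $\iota(\delta_i)$, together with the observations that such reflections fix $L^\perp$ pointwise and that $L_{\mathbb Q}\oplus L_{\mathbb Q}^{\perp}$ exhausts $H_2(X_t,\mathbb Q)_{\textup{van}}$ (the form is negative definite, being the $\mathbb E_6$ lattice up to sign, hence nondegenerate on $L$), is exactly the missing content, and your injectivity argument for $G'\to\Aut(L)$ is sound since the vanishing homology is torsion-free. The ingredients you invoke are the ones the paper uses right afterwards in proving Proposition \ref{MonoActGloProp} — loops around $\Sigma_0$ act by reflections along vanishing cycles supported in $D_0$, and cycles supported in disjoint balls are orthogonal — so your route is the paper's implicit one made precise rather than a detour; the compatibility you flag as the hard part follows from Lemma \ref{analyMilLem} (the hyperplane family near $t_0$ is analytically the Milnor family) together with the fact that the fiberwise inclusion $D_0^{\sm}\hookrightarrow\mathcal{X}^{\sm}$ over $B^{\sm}$ is a map of fiber bundles, which is also how Proposition \ref{MilInjProp} itself is set up. Your reading of irreducibility (the reflection representation of a Weyl group of connected ADE type is $\mathbb Q$-irreducible, via Proposition \ref{LocMonGpProp} and Lemma \ref{GabrielovLem}) is the intended one.
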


Now we're ready to prove the key proposition in this section.

\noindent\textit{Proof of Proposition \ref{MonoActGloProp}}. Denote $x_1,...,x_k$ the singularities of $X_{t_0}$ with ADE type. 
Let $W_i$ be the Weyl group corresponding to the type of the singularity $x_i$, then $W(R_e)=W_1\times\cdots\times W_k.$ We'll show that the local monodromy group $G_{t_0}$ is isomorphic to $W_1\times\cdots\times W_k$ as well.

Let $D_i$ denote a small ball in $\mathcal{X}$ around $x_i$ such that $D_i\cap D_j=\emptyset$ for $i\neq j$. Let $\Sigma_i=\{t\in B|X_t\cap D_i~\textup{is singular}\}$ be the discriminant locus of hyperplane sections of $X$ associated to $x_i$. Then $\Sigma_i$ is an irreducible analytic divisor of $B$ and $X^{\vee}\cap B=\cup_{i}\Sigma_i(x_i)$. None of the $\Sigma_i$ contains $\Sigma_j$ for $i\neq j$, since otherwise, the locus will extend to a proper curve, contradicting the fact that the dual variety $X^{\vee}$ is smooth in codimension one, and that the smooth locus parameterizes the hyperplane section with one ordinary nodal singularity.

Fix a general point $t'\in B^{\textup{sm}}$. We take a general pencil $\mathbb L$ in $(\mathbb P^4)^*$ through $t'$ intersecting $X^{\vee}\cap B$ transversely along the smooth locus. So $\mathbb L$ intersects each $\Sigma_i$ transversely at points $t_i^j$, for $j=1,...,\mu_i$, where $\mu_i$ is the Milnor number of $x_i$. None of the $t_i^j$ coincides with $t_{i'}^{j'}$ unless $i=i'$ and $j=j'$. There is a vanishing cycle $\delta_i^j\in H^2(X_{t'},\mathbb Z)$ associate to $t_i^j$. The monodromy action $T_i^j$ induced by the simple loop around $t_i^j$ on the 72 roots is given by the Picard-Lefschetz formula $(\ref{PLeqn})$ associated to $\delta_i^j$. Moreover, via the surjectivity $$\pi_1(\mathbb L\cap B^{\textup{}}, t')\twoheadrightarrow \pi_1(B^{\textup{sm}},t'),$$
the monodromy group $G_{t_0}$ defined in \eqref{eqn_LocMonodromyAction-General} is generated by $T_i^j$, $i=1,...,k$, $j=1,...,\mu_i$.
By Proposition \ref{LocMonGpProp} and Corollary \ref{LocGloMonoGrpCor}, the subgroup generated by $T_i^1,\ldots, T_i^{\mu_i}$ is the Weyl group $W_i$, which is also the subgroup generated by the reflections corresponding to the exceptional curves over $x_i$. 

Finally, since $\delta_i^j$ can be represented by a topological 2-sphere contained in the neighborhood $D_i$ around $x_i$, the intersection number $$(\delta_i^j,\delta_{i'}^{j'})=0,~i\neq i'.$$

Therefore, the monodromy operators $T_i^{j}$ and $T_{i'}^{j'}$ commute for $i\neq i'$ by Picard-Lefschetz formula \eqref{PLeqn}. Therefore, the subgroup corresponding to the monodromy group of cohomology on Milnor fiber of $x_i$ commutes with the subgroup corresponding to that of $x_j$. It follows that the monodromy group of $X_0$ is the product $W_1\times\cdots \times W_k.$ \qed

\section{Extension of the Topological Abel-Jacobi Map}\label{Section_Cubic3fold_ExtAJ}

Recall that the topological Abel-Jacobi map $\Psi_{\textup{top}}:\mathcal{T}_v\to J(X)$ \eqref{Intro_eqn_TAJ_cubic3fold} is one-to-one onto an open subspace of the theta divisor. We are interested in whether $\Psi_{\textup{top}}$ can extend to the compactifications of $\mathcal{T}_v$. %The Proposition below will prove Theorem \ref{barT'BlowupThm}, part (3).
\begin{proposition}\label{Prop_nonExtTAJ}
The topological Abel-Jacobi map $\mathcal{T}_v\to J(X)$ extends to a morphism  $\bar{\mathcal{T}}_v\to J(X)$ if and only if the cubic threefold $X$ has no Eckardt point.
\end{proposition}
\begin{proof}

Note that $\mathcal{T}_v\to J(X)$ always extends to a regular morphism on $\textup{Bl}_0(\Theta)$ via
\begin{equation}
    \mathcal{T}_v\hookrightarrow\textup{Bl}_0(\Theta)\to \Theta\hookrightarrow J(X). \label{Bl Theta to J(X)}
\end{equation}

According to Theorem \ref{barT'BlowupThm}, the map $\textup{Bl}_0(\Theta)\to \bar{\mathcal{T}}_v$ contracts an elliptic curve $E_i$ corresponding to an Eckardt point on $X$. Since the morphism $(\ref{Bl Theta to J(X)})$ sends $E_i$ isomorphically onto its image in $J(X)$, the rational map $\bar{\mathcal{T}}_v\dashrightarrow J(X)$ extends to a regular morphism if and only in $X$ has no Eckardt point.

\end{proof}

So according to the proof, the boundary points where $\Psi_{\textup{top}}$ does not extend are exactly the points over $t_i\in (\mathbb P^4)^*$ such that $X_{t_i}=X\cap H_{t_i}$ is an Eckardt cone, namely a cone over an elliptic curve, and the cone point is an Eckardt point on $X$. For the same reason, the Abel-Jacobi map $\Psi$ does not extend to the Stein completion $ \overline{(F\times F)^{\circ}}$ when Eckardt point occurs.

\subsection{Semistable Reduction} We are in the situation described in Section \ref{Intro_Sec_ExtTAJ}. 

Let $X_0$ be a hyperplane section of $X$ with an elliptic singularity. Then $X_0$ is a cone over a plane cubic curve $E$.  Choose a general pencil of hyperplane sections through $X_0$ and restrict the family to a small holomorphic disk $\Delta$ with $t=0$ corresponding to $X_0$. Denote 
\begin{equation}\label{Eqn_pencilX0}
    \mathcal{X}\to \Delta
\end{equation}
 the corresponding pullback family of hyperplane sections of $X$.  
 
 Let $\bar{M}_{\textup{cubic}}$ be the moduli space of cubic surface arising from GIT \cite{ACT}. Then there is an open subspace $M_{\textup{cubic}}$ of $\bar{M}_{\textup{cubic}}$ parameterizing stable cubic surfaces, which are cubic surfaces with at worst $A_1$ singularities. The strict semistable locus consists of a single point, which corresponds to the cubic surface $xyz=w^3$ with three $A_2$ singularities.

In particular, the family \eqref{Eqn_pencilX0} is smooth over $\Delta^*=\Delta\setminus\{0\}$ and defines a rational map
\begin{equation*}
    m:\Delta\dashrightarrow \bar{M}_{\textup{cubic}},
\end{equation*}
whose regular extension $\bar{m}:\Delta\to \bar{M}_{\textup{cubic}}$ specifies a semistable limit $\bar{m}(0)\in \bar{M}_{\textup{cubic}}$.

To find the limiting cubic surface explicitly, we'll consider the semistable reduction of the family. We will show that after a base change by a cyclic cover $\tilde{\Delta}\to \Delta$ of order 3 totally branched at $0$, the family is birational to a smooth family. Let's denote $\tilde{\Delta}^*=\tilde{\Delta}\setminus\{0\}$.

\begin{proposition} \label{Prop_Semistable_reduction}
There is a smooth total space $\tilde{\mathcal{X}}$, and a flat family $f:\tilde{\mathcal{X}}\to \tilde{\Delta}$ extending the smooth family $\mathcal{X}\times_{\Delta^*}\tilde{\Delta}^*$. Moreover, 

\begin{itemize}
    \item[(1)] the special fiber $\tilde{\mathcal{X}}_0\cong \tilde{X}_0\cup Z$ has two irreducible components, where $\tilde{X}_0$ is the blow-up at the cone point,  $Z$ is the cubic surface arising as cyclic cover of $\mathbb P^2$ along a cubic curve $E$. Two components intersect transversely along $E$. 
    \item[(2)] The normal bundle $N_{\tilde{X}_0|\tilde{\mathcal{X}}}$ restricts to the ruling of $\tilde{X}_0$ is isomorphic to $\mathcal{O}(-1)$.
    \item[(3)] The ruling of $\tilde{X}_0$ is extremal in the relative Mori cone $\overline{NE}(\tilde{\mathcal{X}}/\tilde{\Delta})$.

\end{itemize}
\end{proposition}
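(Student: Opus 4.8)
The plan is to reduce everything to an explicit local computation at the cone point and then read off the three properties. First I would set up a local normal form. By Lemma \ref{analyMilLem}, near the Eckardt cone point $p$ an affine chart of $\mathcal{X}\to \Delta$ is analytically equivalent to $f(x_1,x_2,x_3)+t=0$, where $f$ is a nondegenerate homogeneous cubic with $E=\{f=0\}\subseteq \mathbb{P}^2$ (so $X_0$ is the cone over $E$). Via $t=-f(x)$ this total space is smooth near $p$ (it is just $\mathbb{C}^3_x$), and away from $p$ the general pencil keeps $\mathcal{X}$ smooth; thus $\mathcal{X}$ is smooth with the only fiber singularity at $p$. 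Performing the order-$3$ base change $t=s^{3}$ gives $\mathcal{X}\times_\Delta\tilde\Delta$ with local equation $f(x_1,x_2,x_3)+s^{3}=0$. The crucial observation is that this is a homogeneous cubic in the four variables $(x_1,x_2,x_3,s)$, hence near $p$ it is the affine cone over the projective cubic surface $Z=\{f(x)+s^{3}=0\}\subseteq\mathbb{P}^3$, which is exactly the cyclic triple cover of $\mathbb{P}^2$ branched along $E$; since $E$ is smooth, $Z$ is a smooth cubic surface and the base-changed total space is singular only at the cone point $p'$.

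Next I would resolve by a single blow-up $\tilde{\mathcal{X}}:=\Bl_{p'}(\mathcal{X}\times_\Delta\tilde\Delta)$. Blowing up the vertex of the cone over the smooth surface $Z$ produces a smooth threefold (the total space of $\mathcal{O}_Z(-1)$) with exceptional divisor $\cong Z$. A direct chart computation in the two relevant affine charts of the blow-up shows the central fiber is $\Div(s)=\tilde{X}_0+Z$ with \emph{both} components of multiplicity one, meeting transversally along $\{u=\tau=0\}=E$; here $\tilde X_0$ is the strict transform of $X_0$, i.e.\ the blow-up of the cone at its vertex (its minimal resolution, a geometrically ruled surface over $E$ with the $(-3)$-section $E$). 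Since $\tilde{\mathcal{X}}$ is smooth over the smooth curve $\tilde\Delta$ with all fibers of pure dimension two, $f$ is flat and extends $\mathcal{X}\times_{\Delta^*}\tilde\Delta^*$. This establishes the decomposition in part (1).

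For part (2) I would use that the central fiber is a principal fiber: $\mathcal{O}_{\tilde{\mathcal{X}}}(\tilde X_0+Z)=\mathcal{O}_{\tilde{\mathcal{X}}}(f^{-1}(0))$ is trivial near the fiber (trivialized by $f^{*}s$). Restricting to $\tilde X_0$ gives $N_{\tilde X_0\mid\tilde{\mathcal{X}}}\otimes\mathcal{O}_{\tilde X_0}(Z\cap\tilde X_0)\cong\mathcal{O}_{\tilde X_0}$, so $N_{\tilde X_0\mid\tilde{\mathcal{X}}}\cong\mathcal{O}_{\tilde X_0}(-E)$. On a ruling $\ell$ (a fiber of $\tilde X_0\to E$) the $(-3)$-section $E$ meets $\ell$ once, so $N_{\tilde X_0\mid\tilde{\mathcal{X}}}\big|_{\ell}\cong\mathcal{O}_\ell(-1)$, giving (2).

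Finally, for part (3) I would combine (2) with adjunction: from $K_{\tilde{\mathcal{X}}}\big|_{\tilde X_0}=K_{\tilde X_0}-N_{\tilde X_0\mid\tilde{\mathcal{X}}}$ and $K_{\tilde X_0}\cdot\ell=-2$ (ruled surface) together with $N_{\tilde X_0\mid\tilde{\mathcal{X}}}\cdot\ell=-1$, one gets $K_{\tilde{\mathcal{X}}}\cdot\ell=-1<0$, so $\ell$ is a $K$-negative rational curve. Because $\tilde X_0\to E$ is a $\mathbb{P}^1$-bundle whose normal bundle restricts to $\mathcal{O}(-1)$ on fibers, $\tilde X_0$ is contractible to the smooth curve $E$ by a relative divisorial contraction (a smooth blow-down, equivalently a Mori contraction of type $(C)$), and this contraction is defined over $\tilde\Delta$ since the $\ell$'s lie in the central fiber. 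Hence $\mathbb{R}_{\ge 0}[\ell]$ is an extremal ray of $\overline{NE}(\tilde{\mathcal{X}}/\tilde\Delta)$, proving (3). I expect the main obstacle to be precisely this last Mori-theoretic step, namely verifying that the ruling spans a genuine extremal ray (constructing or invoking the relative contraction) rather than merely being $K$-negative; a secondary technical point is confirming that one blow-up suffices globally and that the total space is smooth away from $p'$, which the normal form of Lemma \ref{analyMilLem} handles cleanly.
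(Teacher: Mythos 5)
Your proposal is correct, and for the setup and parts (1)--(2) it is essentially the paper's own argument: the paper blows up the cone point first and then base changes and normalizes, while you base change first and then blow up a single point --- a reordering the paper itself sanctions in the remark immediately following the proposition --- and your part (2) (triviality of $\mathcal{O}(\tilde{X}_0+Z)$ along the central fiber, restriction to $\tilde{X}_0$, then intersecting the $(-3)$-section with a ruling) is word-for-word the paper's computation. Your explicit chart computation, showing the central fiber is the \emph{reduced} divisor $\tilde{X}_0+Z$ meeting transversally along $E$, is actually more detailed than what the paper records. Where you genuinely diverge is part (3). The paper's proof is purely numerical: writing the ruling class as $a E_\infty + bC$ with $C$ effective and not contained in $\tilde{X}_0$, intersecting with the divisor $\tilde{X}_0$ gives $-1 = 3a + b\, C\cdot \tilde{X}_0$, which is impossible for $a,b \ge 0$; the existence and smoothness of the contraction to $W$ is then deduced \emph{afterwards} (in the Corollary) from extremality via the relative cone theorem \cite[Theorem 3.25]{KM} and \cite[Theorem 3.2.8]{BS95}. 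You run the logic in the opposite direction: you construct the blow-down of $\tilde{X}_0$ to $E$ directly (the Fujiki--Nakano criterion for a $\mathbb{P}^1$-bundle divisor whose normal bundle is $\mathcal{O}(-1)$ on fibers) and then deduce extremality from the existence of that contraction. This is legitimate and buys you the subsequent Corollary for free, but the final inference ``a contraction exists, hence the ray is extremal'' --- precisely the step you flag as the main obstacle --- does need an argument: if $A$ is relatively ample on $W$, then $g^*A$ is nef over $\tilde{\Delta}$, the face $\overline{NE}(\tilde{\mathcal{X}}/\tilde{\Delta}) \cap (g^*A)^{\perp}$ is extremal and, by the projection formula and Kleiman positivity, equals $\ker\bigl(N_1(\tilde{\mathcal{X}}/\tilde{\Delta}) \to N_1(W/\tilde{\Delta})\bigr) \cap \overline{NE}(\tilde{\mathcal{X}}/\tilde{\Delta})$; since a divisor-to-curve blow-down drops the relative Picard number by exactly one, this face is the single ray spanned by the ruling. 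With that paragraph your route closes; the paper's route is more elementary (intersection numbers only, no appeal to a blow-down criterion), and note that the $K$-negativity you compute, while true, is not used in the paper's extremality argument.

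Two small caveats on your setup. First, Lemma \ref{analyMilLem} is stated for ADE singularities, whereas the Eckardt cone point is an elliptic singularity; you should note that its proof uses only smoothness of $X$ at the point and so applies verbatim here. Second, that normal form is $u(x)\,f(x)+t=0$ for a unit $u$, not an exactly homogeneous cubic plus $t$; so after the base change $t=s^3$ the total space is not literally a cone over $Z$, but its projectivized tangent cone at the singular point is $\{c\,f(x)+s^3=0\}\cong Z$, which is smooth --- and that is all the one-blow-up resolution and the identification of the exceptional divisor with $Z$ require.
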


\begin{proof}
(1) Blow up $\mathcal{X}$ at the cone point $p$ of $X_0$, then take the base change with respect to the 3:1 cover $\tilde{\Delta}\to \Delta$ and normalize, we get a family 
\begin{equation}
    \tilde{\mathcal{X}}\to \tilde{\Delta},\label{E_basechange3:1}
\end{equation}
whose fiber at 0 is $\tilde{X}_0\cup Z$, where $Z$ is the triple cover of $\mathbb P^2$ branched along $E$, $\tilde{X}_0$ is a ruled surface, and two components meet transversely along $E$. 

(2) Since $\mathcal{O}(\tilde{X}_0+Z)=\mathcal{O}_{\tilde{X}_0}$, one has $\mathcal{O}(\tilde{X}_0+Z)|_{\tilde{X}_0}=\mathcal{O}_{\tilde{X}_0}$. On the other hand, $\mathcal{O}(Z)|_{\tilde{X}_0}=\mathcal{O}_{\tilde{X}_0}(\tilde{X}_0\cap Z)=\mathcal{O}_{\tilde{X}_0}(E_{\infty})$, where $E_{\infty}$ is the section at infinity, i.e., the divisor with $E_{\infty}^2=-3$. Therefore
\begin{equation}\label{eqn_Cubic3fold_StableReduction6}
    N_{\tilde{X}_0|\tilde{\mathcal{X}}}=\mathcal{O}(\tilde{X}_0)|_{\tilde{X}_0}=\mathcal{O}_{\tilde{X}_0}(-E_{\infty}).
\end{equation}

Since $E_{\infty}$ intersects each member of the ruling transversely at one point, the normal bundle restricted to the ruling has degree $(-1)$.

$(3)$  Let $F$ be the ruling on $\tilde{X}_0$, then as divisors,
\begin{equation}\label{eqn_MoriCone}
    F=aE_{\infty}+bC,
\end{equation}
where $C$ is an effective curve whose irreducible components are not contained in the ruled surface $\tilde{X}_0$. Then by intersecting both sides of \eqref{eqn_MoriCone} with $\tilde{X}_0$ and use \eqref{eqn_Cubic3fold_StableReduction6}, one obtains 

$$-1=3a+bC\cdot \tilde{X}_0.$$
Since $C\cdot \tilde{X}_0\ge 0$, $a$ and $b$ cannot be both positive.

\end{proof}
\begin{remark}\normalfont
In Proposition \ref{Prop_Semistable_reduction}, one may base change first and then blow up the cone point to obtain the family \eqref{E_basechange3:1}.
\end{remark}

\begin{corollary}
    $f$ factors through the diagram
\begin{figure}[ht]
    \centering
\begin{equation}
\begin{tikzcd}
\tilde{\mathcal{X}}\arrow[r,"g"] \arrow[d,"f"] & W  \arrow[dl,"h"] \\
\tilde{\Delta},
\end{tikzcd}
\end{equation}
\end{figure}{}

\noindent where $W$ is a smooth projective variety, $g$ is birational and blows down the ruled surface $\tilde{X}_0$ to an elliptic curve $E$ in $W$, $h$ is a smooth morphism, and the fiber $h^{-1}(0)$ is isomorphic to $Z$.
\end{corollary}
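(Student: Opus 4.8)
The plan is to contract the ruled surface $\tilde{X}_0$ by appealing to the Mori-theoretic data assembled in Proposition \ref{Prop_Semistable_reduction}, and then to identify the resulting space. First I would verify that the ruling class $F$ generates an extremal ray $R$ of the relative Mori cone $\overline{NE}(\tilde{\mathcal{X}}/\tilde{\Delta})$; this is exactly the content of part (3) of the Proposition, combined with the fact that $F$ is the class of a rational curve. Since $\tilde{\mathcal{X}}$ is smooth and $3$-dimensional and $K_{\tilde{\mathcal{X}}}\cdot F<0$ (the ruling of a ruled surface has $K_{\tilde{X}_0}\cdot F=-2$, and by adjunction on the relative setting the normal-bundle computation \eqref{eqn_Cubic3fold_StableReduction6} gives $K_{\tilde{\mathcal{X}}}\cdot F = K_{\tilde{X}_0}\cdot F - N_{\tilde{X}_0|\tilde{\mathcal{X}}}\cdot F = -2-(-1)=-1<0$), the ray $R$ is a $K_{\tilde{\mathcal{X}}}$-negative extremal ray, so the Cone Theorem produces a contraction morphism $g:\tilde{\mathcal{X}}\to W$ contracting precisely the curves whose class lies in $R$.

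Next I would determine the type of this contraction. The locus swept out by curves in the ray $R$ is the ruled surface $\tilde{X}_0$, a divisor, so $g$ is a divisorial contraction. Because each fiber of the ruling is a $\mathbb{P}^1$ with normal bundle $\mathcal{O}(-1)$ inside $\tilde{X}_0$ (from part (2)) and these $\mathbb{P}^1$'s cover a two-dimensional image, the map $g$ blows $\tilde{X}_0$ down along its ruling onto the base of the ruling, which is the curve $E$; this is the standard blow-down of a ruled surface with a section at infinity. I would check that $g$ contracts $\tilde{X}_0$ to $E\cong \tilde{X}_0\cap Z$ and is an isomorphism elsewhere, so that $g(\tilde{X}_0)=E\subseteq W$ is a smooth curve. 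Smoothness of $W$ follows since the contraction of a divisor with normal bundle restricting to $\mathcal{O}(-1)$ on fibers, analogous to a smooth blow-down along a ruling, yields a smooth threefold; one may phrase this by noting $g$ is, locally near $\tilde{X}_0$, the blow-up of $W$ along the smooth curve $E$.

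Finally I would produce the factorization $f = h\circ g$ and analyze $h$. Since $g$ contracts only curves lying in fibers of $f$ (the ruling $F$ is contained in the fiber $\tilde{\mathcal{X}}_0$), the morphism $f$ is constant on the fibers of $g$, hence factors as $h\circ g$ for a unique morphism $h:W\to\tilde{\Delta}$. The generic fiber of $h$ agrees with that of $f$, so $h$ is smooth over $\tilde{\Delta}^*$; the central fiber $h^{-1}(0)$ is the image $g(\tilde{\mathcal{X}}_0)=g(\tilde{X}_0\cup Z)$, which is $Z$ together with the contracted $\tilde{X}_0$, and since $\tilde{X}_0$ maps onto the curve $E\subseteq Z$, this image is exactly $Z$. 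One then checks that $h$ is a smooth morphism along $h^{-1}(0)=Z$: the total space $W$ is smooth, the special fiber $Z$ is smooth (it is the cyclic cover of $\mathbb{P}^2$ branched along a smooth cubic $E$, hence a smooth cubic surface), and the family has no other singular fibers, so the relative canonical computation shows $h$ is a submersion.

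The main obstacle I anticipate is establishing smoothness of $W$ and the submersivity of $h$ at the central fiber rigorously, rather than by analogy. The clean way is to identify $g$ as the blow-up of the smooth threefold $W$ along the smooth curve $E$: one must confirm that the exceptional divisor $\tilde{X}_0$ is a $\mathbb{P}^1$-bundle over $E$ with the correct normal bundle so that the classical blow-down criterion (Castelnuovo--Mori--Nakano for divisorial contractions to a curve) applies. Verifying that the contracted divisor is genuinely a $\mathbb{P}^1$-bundle and that the contraction lands in the smooth category requires checking the conormal data matches that of a blow-up, which is where the explicit normal bundle formula \eqref{eqn_Cubic3fold_StableReduction6} does the essential work.
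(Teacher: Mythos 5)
Your proposal is correct and follows essentially the same route as the paper: the paper's proof likewise invokes Proposition \ref{Prop_Semistable_reduction} together with the relative Cone theorem \cite[Theorem 3.25]{KM} to contract the ruling, and cites \cite[Theorem 3.2.8]{BS95} for smoothness of $W$ via the $\mathcal{O}(-1)$ normal bundle condition, exactly as you argue. Your explicit check that $K_{\tilde{\mathcal{X}}}\cdot F=-1<0$ is a worthwhile detail that the paper leaves implicit, since extremality alone does not suffice to apply the contraction theorem.
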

\begin{proof}
    This follows from Proposition \ref{Prop_Semistable_reduction} and a relative version of Mori's Cone theorem \cite[Theorem 3.25]{KM}. The smoothness of $W$ follows from that the normal bundle restricting to ruling is $\mathcal{O}(-1)$ \cite[Theorem 3.2.8]{BS95}. 
\end{proof}

\begin{corollary}\label{Cor_StableLim}
    The semistable limit $\bar{m}(0)$ of a general pencil of hyperplane sections of $X$ through the cone $X_0$ over an elliptic curve $E$ is a smooth cubic surface $Z$ arising as the cyclic cover of $\mathbb P^2$ branched along a cubic curve isomorphic to $E$.
\end{corollary}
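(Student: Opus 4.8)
The plan is to combine the semistable reduction carried out in Proposition \ref{Prop_Semistable_reduction} and the preceding corollary with the separatedness of the GIT moduli space $\bar{M}_{\textup{cubic}}$. First I would recall the two maps to the moduli space that are now available. On the one hand, the original pencil $\mathcal{X}\to\Delta$ gives the rational map $m:\Delta\dashrightarrow\bar{M}_{\textup{cubic}}$ whose regular extension $\bar{m}$ defines the semistable limit $\bar{m}(0)$ we wish to identify. On the other hand, the base change $\nu:\tilde{\Delta}\to\Delta$ by the order-$3$ cyclic cover, followed by the contraction $g:\tilde{\mathcal{X}}\to W$ of the ruled component $\tilde{X}_0$, produces a smooth morphism $h:W\to\tilde{\Delta}$ whose central fiber is $Z=h^{-1}(0)$, the triple cyclic cover of $\mathbb{P}^2$ branched along $E$.

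The key point is that $Z$ is a \emph{smooth} cubic surface: it is defined by an equation of the shape $z^3=f_3(x_0,x_1,x_2)$, where $f_3$ cuts out the smooth cubic curve $E\subseteq\mathbb{P}^2$, so its only possible singularities lie over singular points of $E$, of which there are none. Since $Z$ is smooth, it is stable in the sense of \cite{ACT}, hence defines a point $[Z]\in M_{\textup{cubic}}\subseteq\bar{M}_{\textup{cubic}}$, and the smooth family $h:W\to\tilde{\Delta}$ induces a genuine \emph{morphism} $\tilde{m}:\tilde{\Delta}\to\bar{M}_{\textup{cubic}}$ with $\tilde{m}(0)=[Z]$.

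Now I would compare $\tilde{m}$ with the pullback $\bar{m}\circ\nu$ of the extended moduli map along the finite cover $\nu$. Over the punctured disk $\tilde{\Delta}^*$ both families $W|_{\tilde{\Delta}^*}$ and $(\mathcal{X}\times_\Delta\tilde{\Delta})|_{\tilde{\Delta}^*}$ are the same smooth family of cubic surfaces, so $\tilde{m}$ and $\bar{m}\circ\nu$ agree on the dense open $\tilde{\Delta}^*$. Since $\bar{M}_{\textup{cubic}}$ is separated (being projective) and $\tilde{\Delta}$ is a smooth curve, two morphisms from $\tilde{\Delta}$ agreeing on $\tilde{\Delta}^*$ must coincide; evaluating at $0$, and using $\nu(0)=0$, gives $\bar{m}(0)=\bar{m}(\nu(0))=\tilde{m}(0)=[Z]$. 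This identifies the semistable limit with the smooth cyclic cover $Z$, as claimed.

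The only genuine subtlety—more bookkeeping than obstacle—is to ensure the comparison survives the base change: because the moduli point $\bar{m}(0)$ is insensitive to the finite cover $\nu$ (it is computed by the \emph{same} extended map precomposed with $\nu$), the semistable limit of the original pencil can legitimately be read off from the semistable family $W\to\tilde{\Delta}$. One should also emphasize that, although $Z$ appears inside the reducible central fiber $\tilde{\mathcal{X}}_0=\tilde{X}_0\cup Z$ of $\tilde{\mathcal{X}}\to\tilde{\Delta}$, it is precisely the contraction $g$ of the unstable ruled component $\tilde{X}_0$—justified by parts (2) and (3) of Proposition \ref{Prop_Semistable_reduction}—that isolates $Z$ as the stable limit, which is why the semistable reduction records the single smooth surface $Z$ rather than the reducible total degeneration.
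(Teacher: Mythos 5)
Your proof is correct and follows essentially the same route the paper intends: the corollary is deduced from Proposition \ref{Prop_Semistable_reduction} and the contraction $g:\tilde{\mathcal{X}}\to W$, with the limit identified via the smooth family $h:W\to\tilde{\Delta}$. The paper leaves this deduction implicit, and your write-up simply supplies the standard details (smoothness, hence stability, of $Z$; separatedness of $\bar{M}_{\textup{cubic}}$; insensitivity of the limit to the finite base change $\nu$), all of which are accurate.
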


\begin{proposition}\label{Prop_monodromy_order_3}
The monodromy group of the family $\mathcal{X}^*\to \Delta^*$ is $\mathbb Z_3$. Moreover, it acts freely on the 27 lines and the 72 roots.
\end{proposition}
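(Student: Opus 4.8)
The plan is to identify the monodromy transformation with the deck transformation of the cyclic triple cover $Z\to\mathbb{P}^2$ furnished by Corollary \ref{Cor_StableLim}, and then to read off both freeness statements from the geometry of that cover.

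First I would bound the order of the monodromy. Let $\gamma\in\Aut H^2(X_{t'},\mathbb{Z})$ denote the monodromy of $\mathcal{X}^*\to\Delta^*$ around a generator of $\pi_1(\Delta^*,t')$. The base change $\tilde\Delta\to\Delta$, $s\mapsto s^3$, pulls the family back to one whose monodromy is $\gamma^3$, since a generator of $\pi_1(\tilde\Delta^*)$ maps to three times a generator of $\pi_1(\Delta^*)$. By Proposition \ref{Prop_Semistable_reduction} and its corollary, this pulled-back family admits the smooth proper model $h:W\to\tilde\Delta$ with central fiber $Z$. As $\tilde\Delta$ is contractible, Ehresmann's theorem makes $W\to\tilde\Delta$ a $C^\infty$ fiber bundle over a contractible base, so its monodromy is trivial; hence $\gamma^3=\mathrm{id}$ and the monodromy group is cyclic of order dividing $3$.

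Next I would pin down $\gamma$ itself. The construction of $W$ (base change, normalization, and the Mori contraction) is canonical, hence equivariant for the Galois group $\mathbb{Z}_3=\mathrm{Gal}(\tilde\Delta/\Delta)=\langle\sigma\rangle$, $\sigma(s)=\zeta s$ with $\zeta=e^{2\pi i/3}$. Thus $\sigma$ acts on $W$ covering its action on $\tilde\Delta$, and the quotient recovers $\mathcal{X}^*\to\Delta^*$ up to the birational modification. Since $\tilde\Delta^*\to\Delta^*$ is the connected triple cover associated with $3\mathbb{Z}\subseteq\mathbb{Z}=\pi_1(\Delta^*)$ with deck group $\mathbb{Z}_3$, the monodromy representation $\mathbb{Z}=\pi_1(\Delta^*)\to\Aut H^2$ factors through $\mathbb{Z}_3$ and sends the generator to the action of $\sigma$ on the constant local system $R^2h_*\mathbb{Z}$. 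Specializing to the central fiber, this action is the deck transformation $\tau\colon w\mapsto\zeta w$ of $Z=\{w^3=f_3(x,y,z)\}$, where $E=\{f_3=0\}$. Therefore $\gamma$ is identified with $\tau$ acting on $H^2(Z,\mathbb{Z})$.

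It remains to verify that $\tau$ acts freely on lines and roots, which also shows $\tau\neq\mathrm{id}$ and hence that the group has order exactly $3$. For the $27$ lines I would use the description in Corollary \ref{Cor_StableLim}: a flex tangent $\ell$ to $E$ meets $E$ only at the flex with multiplicity three, so $f_3|_\ell$ is a perfect cube and $\pi^{-1}(\ell)$ splits as $w^3=u^3$ into the three lines $w=\zeta^i u$; the nine flex tangents thus account for all $27$ lines, grouped into nine orbits of size three that $\tau$ permutes cyclically. In particular $\tau$ fixes no line and $\tau\neq\mathrm{id}$. For the $72$ roots I would compute the $\tau$-invariants in cohomology: for the degree-three cyclic cover $\pi\colon Z\to\mathbb{P}^2$ one has $H^2(Z,\mathbb{Q})^{\tau}\cong H^2(Z/\mathbb{Z}_3,\mathbb{Q})=H^2(\mathbb{P}^2,\mathbb{Q})$, which is one-dimensional and spanned by $\pi^*(\text{line})=h$. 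Since $\tau$ preserves $h$ and the intersection form, it preserves $\Pic(Z)$ and the sublattice $h^{\perp}\cong\mathbb{E}_6$, and the fixed space of $\tau$ on $h^{\perp}\otimes\mathbb{Q}$ is $(\mathbb{Q}h)\cap h^{\perp}=0$. Hence $\tau$ fixes no nonzero vector of $\mathbb{E}_6\otimes\mathbb{Q}$, in particular no root; as $\tau^2=\tau^{-1}$ fixes a root if and only if $\tau$ does, the action on the $72$ roots is free, with $72/3=24$ orbits.

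The main obstacle is the identification $\gamma=\tau$ in the third paragraph: one must check that the semistable reduction of Proposition \ref{Prop_Semistable_reduction} can be carried out $\mathbb{Z}_3$-equivariantly and that, under the resulting presentation of $\mathcal{X}^*\to\Delta^*$ as the quotient of the smooth family $W^*\to\tilde\Delta^*$ by the free $\mathbb{Z}_3$-action on $\tilde\Delta^*$, the abstract monodromy really is computed by the deck transformation $\sigma$ specializing to $\tau$ on $Z$. The freeness statements are then comparatively routine, resting only on the flex-tangent geometry and the rank-one invariant computation.
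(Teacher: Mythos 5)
Your proof is correct. Its skeleton matches the paper's for the first two steps: the order bound comes from the base change $t=s^3$ together with Ehresmann's theorem applied to the smooth model $h:W\to\tilde\Delta$ of Proposition \ref{Prop_Semistable_reduction}, and nontriviality comes from identifying the monodromy generator with the deck transformation $\tau$ of the triple cover $Z\to\mathbb P^2$, which visibly permutes the three lines over each of the nine flex tangents cyclically. The paper asserts this identification in a single sentence (``the generator of the monodromy cyclically permutes the three sheets of $Z\cong W_0$''), whereas you spell out the $\mathbb Z_3$-equivariance of the base change, normalization, and Mori contraction that justifies it; this is added rigor, not a deviation. Where you genuinely diverge is freeness on the $72$ roots. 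The paper stays inside the combinatorics of the $27$ lines: writing a root as $[L_{i1}]-[L_{i'2}]$ for a disjoint pair (possible by Proposition \ref{VC=[L]-[M]Prop}), the monodromy sends it to $[L_{i2}]-[L_{i'3}]$, and the intersection number of these two classes is $1\neq -2$, so no root is fixed. You instead compute invariant cohomology: by the transfer isomorphism for the finite quotient $Z/\mathbb Z_3=\mathbb P^2$ one has $H^2(Z,\mathbb Q)^{\tau}\cong H^2(\mathbb P^2,\mathbb Q)=\mathbb Q h$, and since $h^2=3\neq 0$ the $\tau$-fixed subspace of $h^{\perp}\otimes\mathbb Q$ is $\mathbb Q h\cap h^{\perp}=0$, so $\tau$ fixes no nonzero vector of $\mathbb E_6\otimes\mathbb Q$ at all. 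Your route proves a stronger statement (no fixed vectors, not merely no fixed roots) and avoids any case analysis of line incidences, at the cost of invoking the quotient/transfer theorem; the paper's computation is more elementary and reuses the explicit line configuration it has already set up. You also make explicit the small point, implicit in the paper, that freeness of the full $\mathbb Z_3$-action follows from the generator alone because $\tau^2=\tau^{-1}$ fixes a root if and only if $\tau$ does.
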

\begin{proof}
By Ehresmann's theorem, $W\to \tilde{\Delta}$ is topologically trivial and has trivial monodromy group. So the monodromy group of the family $\mathcal{X}^*\to \Delta^*$ is a subgroup of $\Z_3$.

Note that the generator of the monodromy cyclic permutes the three sheets of $Z\cong W_0$. On the other hand, the limiting 27 lines are preimages of the tangent lines to the 9 flex points of cubic curve $E$ under the 3-to-1 cover $Z\to \mathbb P^2$. So the monodromy permutes the three lines over each flex point. In particular, the monodromy group is not zero. So it has to be the entire $\Z_3$.

Finally, for each $1\le i\le 9$, let $L_{i1}, L_{i2}, L_{i3}$ denote the three lines over the tangent line of the $i$-th flex point of $E$. We can assume for each $j$, the lines $L_{ij}$ $1\le i\le 9$ lie in one sheet. Therefore, for a given line, say $L_{i1}$, the 10 others lines that intersects it are $L_{i2}$, $L_{i3}$ and $L_{i'1}$ for $i'\neq i$. In particular, $L_{i1}$ is disjoint from $L_{i'2}$ and the difference $[L_{i1}]-[L_{i'2}]$ is a root. The monodromy action on a root has the form 
$$[L_{i1}]-[L_{i'2}]\mapsto [L_{i2}]-[L_{i'3}].$$

Since $([L_{i1}]-[L_{i'2}])\cdot ([L_{i2}]-[L_{i'3}])=[L_{i1}]\cdot [L_{i2}]-[L_{i'2}]\cdot [L_{i2}]+[L_{i'2}]\cdot [L_{i'3}]=1-1+1=1$. In particular, the monodromy does not fix any root (whose self-intersection is $-2$). So the monodromy is free on the 72 roots.
\end{proof}

\subsection{Construction of New Completion}

Now let's construct a new compactification of $\mathcal{T}_v$: We blow up $(\mathbb P^4)^*$ at (finitely many) points corresponding to Eckardt hyperplanes. Denote the new space by $\tilde{(\mathbb P^4)^*}$. 

Applying Stein's completion lemma (cf. Lemma \ref{analytic_cover_closure}) to the finite covering $\pi_v:\mathcal{T}_v\to U$ with respect to the completion $U\subseteq \tilde{(\mathbb P^4)^*}$, we obtain a normal algebraic variety $\tilde{\mathcal{T}}_v$ together with a finite morphism $\tilde{\pi}_v:\tilde{\mathcal{T}}_v\to \tilde{(\mathbb P^4)^*}$ extending $\pi_v$.

Denote $P\cong \mathbb P^3$ a connected component of the exceptional divisor on $\tilde{(\mathbb P^4)^*}$, then a general $l\in P^3$ corresponds to a one-parameter family of hyperplanes \eqref{Eqn_pencilX0} through $X_0$, and the fiber $\tilde{\pi}^{-1}(l)$ should be the refined notion of "limiting primitive vanishing cycles" for Eckardt hyperplane section.
We have the following characterizations of the new completion $\tilde{\mathcal{T}}_v$.

\begin{proposition}\label{Prop_Tv-tilde-24}
(i) $\tilde{\mathcal{T}}_v$ is isomorphic to the normalization of $\bar{\mathcal{T}}_v\times_{(\mathbb P^4)^*}\tilde{(\mathbb P^4)^*}$.

(ii)  $\tilde{\pi}_P:\tilde{\mathcal{T}}_v\times_{\tilde{(\mathbb P^4)^*}}P\to P$ is finite and has degree $d=24$.
\end{proposition}
\begin{proof}
(i) Since $\bar{\mathcal{T}}_v\times_{(\mathbb P^4)^*}\tilde{(\mathbb P^4)^*}\to \tilde{(\mathbb P^4)^*}$ is also finite and extends $\mathcal{T}_v\to U$, by the uniqueness of Stein's completion, its normalization has to be isomorphic to $\tilde{\mathcal{T}}_v$.

(ii) Finiteness is stable under base change. A general point in $P$ corresponds to a one-parameter family \eqref{Eqn_pencilX0}. By Proposition  \ref{Prop_monodromy_order_3}, the monodromy group is $\Z_3$ and acts freely on the 72 roots. So there are $72/3=24$ orbits.
\end{proof}

From another point of view, if we specify a one-parameter family of hyperplane sections through the Eckardt hyperplane, the 27 lines on the nearby fiber specialize to 27 lines on the Eckardt cone, so does a vanishing cycle represented by the difference of the classes of two skew lines. This suggests that the topological Abel-Jacobi map extends along this one-dimensional disk.

\begin{remark}\normalfont
The branching locus of $\tilde{\pi}_P$ corresponds to the set of pencils that are "not general" in the sense that they are pencils passing through $X_0$ and are contained in the dual variety $X^*$. It is not hard to verify that for each $L$ in the ruling of $X_0$, the pencil defined by the plane $P_L$ which is tangent to $X$ along $L$ (cf. \cite[Lemma 6.7]{CG}) is contained in $X^*$. So the branching locus of $\tilde{\pi}_P$ has dimension at least one.
\end{remark}

\subsection{Extension of Topological Abel-Jacobi Map}

We will show that the topological Abel-Jacobi map $\mathcal{T}_v\to J(X)$ extends to the new compactification $\tilde{\mathcal{T}}_v$. In fact, we will show something stronger, namely the extension lifts to the blow-up $\Bl_0J(X)$.

\begin{proposition}\label{Prop_TAJExt}
There is a morphism $\tilde{\mathcal{T}}_v\to \Bl_0J(X)$ extending the topological Abel-Jacobi map $\mathcal{T}_v\to J(X)$.
\end{proposition}

\begin{proof}
According to Lemma \ref{lemma_ExtendGauss}, the morphism $\tilde{\mathcal{G}}: \textup{Bl}_{0}(\Theta)\to (\mathbb P^4)^*$ has fiber $E_i$ over an Eckardt hyperplane $t_i\in (\mathbb P^4)^*$. We denote $\tilde{\Theta}:=\textup{Bl}_{0}(\Theta)$. Then   $\tilde{\Theta}^{\circ}:=\tilde{\Theta}\setminus \cup_i(E_i)\to (\mathbb P^4)^*\setminus \{t_1,\ldots,t_k\}$ is a finite branched covering. Denote $\tau:\tilde{(\mathbb P^4)^*}\to (\mathbb P^4)^*$ the blowup at $t_i$. Then we can take the closure of the pullback of $\tilde{\Theta}^{\circ}$ in the fiber product 
\begin{equation}\label{eqn_ext-AJ-pullbackclosure}
  \overline{\tau^{-1}(\tilde{\Theta}^{\circ})}\subseteq \tilde{\Theta}\times_{(\mathbb P^4)^*}\tilde{(\mathbb P^4)^*}.  
\end{equation}

The projection to the second coordinate $\pi_2:\overline{\tau^{-1}(\tilde{\Theta}^{\circ})}\to \tilde{(\mathbb P^4)^*}$ is finite. In fact, let $l\in P_i$ corresponding to a pencil $\mathbb L_l\subseteq (\mathbb P^4)^*$ of hyperplanes $\{H_t=H_{t_i}+tH_l=0\}$, then the fiber $\pi_2^{-1}(l)$ is the limit of the finitely many points
$\tilde{\mathcal{G}}^{-1}(t)$ as $t$ goes to $0$. In other words, the closure of $(\mathbb L_l\setminus {0})\times_{(\mathbb P^4)^*}\tilde{\Theta}$ in $\tilde{\Theta}$ is a finite cover over an open neighborhood of $0\in L_l$ and its fiber over $t=0$ corresponds to $\pi_2^{-1}(l)$.

By Stein's Lemma \ref{analytic_cover_closure}, the normalization of $\overline{\tau^{-1}(\tilde{\Theta}^{\circ})}$ is $\tilde{\mathcal{T}}_v$. Now the argument follows from that the composite $\tilde{\mathcal{T}}_v\to \overline{\tau^{-1}(\tilde{\Theta}^{\circ})}\to \textup{Bl}_0(\Theta)\hookrightarrow \Bl_0J(X)$ extends the topological Abel-Jacobi map $\mathcal{T}_v\to J(X)$.
\end{proof}

\noindent \textit{Proof of Theorem \ref{Intro_Thm_TAJext}.}
It follows from Proposition \ref{Prop_monodromy_order_3}, \ref{Prop_Tv-tilde-24}, and \ref{Prop_TAJExt}.
\qed

\section{Tube Mapping}\label{Section_Cubic3fold_Schnell}

In \cite{Tube}, Schnell studied the relationship between the primitive homology $H_n(X,\mathbb Z)_{\textup{prim}}$ of a smooth projective variety $X\subseteq \mathbb P^N$ of dimension $n$ and the vanishing homology $H_{n-1}(Y,\mathbb Z)_{\textup{van}}$ of a smooth hyperplane section $Y=X\bigcap H$. Let $U\subseteq (\mathbb P^N)^*$ be the open set of smooth hyperplanes, and $l\subseteq U$ be a loop based at $t$, and $\alpha\in H_{n-1}(Y,\mathbb Z)_{\textup{van}}$, if $l_*\alpha=\alpha$, then the trace of the parallel transport of $\alpha$ along the loop $l$ is a topological $n$-chain on $X$ with boundary $\alpha-l_*\alpha=0$, so it is a $n$-cycle which is well-defined in the primitive homology. Since the $n$-cycle is a "tube" on $\alpha$ over the loop $l$, such map is called \textit{tube mapping}. Schnell proved that

\begin{theorem}(\cite{Tube})
If $H^{n-1}_{\textup{van}}(Y,\mathbb Z)\neq 0$, then the tube map
$$\{([l],\alpha)\in \pi_1(U,t)\times H_{n-1}(Y,\mathbb Z)_{\textup{van}}|l_*\alpha=\alpha\}\to H_n(X,\mathbb Z)_{\textup{prim}}$$
has a cofinite image. 
\end{theorem}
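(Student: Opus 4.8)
The plan is to show the image is cofinite, i.e. its index in $H_n(X,\Z)_{\prim}$ is finite. The natural framework is the Leray spectral sequence for the universal hyperplane section, so first I would set up the relevant geometry. Let $\mathcal{X}\subseteq X\times U$ be the universal family of smooth hyperplane sections, with projection $f:\mathcal{X}\to U$ whose fiber over $t$ is $Y_t=X\cap H_t$. The tube construction geometrically produces, from a monodromy-invariant vanishing cycle $\alpha$ transported over a loop $l$, an $n$-cycle on $\mathcal{X}$; composing with $\mathcal{X}\to X$ lands it in $H_n(X,\Z)$, and one checks it is primitive. The key is to identify this construction with a connecting/edge map in the Leray (or Wang) sequence so that cofiniteness becomes a statement about ranks of cohomology groups.

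The main steps I would carry out are as follows. First, I would reduce to a rational (or real) statement: since $H_n(X,\Z)_{\prim}$ is finitely generated, it suffices to prove the image spans after tensoring with $\mathbb{Q}$, i.e. that the tube map is surjective over $\mathbb{Q}$, and then argue the integral cokernel is finite. Second, I would invoke the Leray spectral sequence for $f$ and the local system $R^{n-1}f_*\mathbb{Q}$ on $U$; the vanishing cohomology $H^{n-1}_{\van}(Y,\mathbb{Q})$ is the part of $R^{n-1}f_*\mathbb{Q}$ complementary to the monodromy-invariant (fixed) part, and the group of pairs $([l],\alpha)$ with $l_*\alpha=\alpha$ feeds precisely into the term $H^1(U, R^{n-1}f_*\mathbb{Q})$ or the invariants $H^0(U,\mathcal{H})$ depending on the normalization. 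Third, I would match the tube map with the Leray edge homomorphism $H_n(\mathcal{X})\to H_n(X)$ combined with the identification of $H_n(\mathcal{X})$ contributed by the $E_2^{1,n-1}$ term, using that $\mathcal{X}\to X$ is a projective bundle (a $\mathbb{P}^{N-1}$-bundle, since hyperplanes through a point form a hyperplane in $(\mathbb{P}^N)^*$) so its cohomology is computed by the projective bundle formula. Finally, cofiniteness follows from the fact that the cokernel of the edge map is controlled by the remaining Leray terms $E_\infty^{0,n}=H^0(U,R^nf_*\mathbb{Q})$ and $E_\infty^{2,n-1}$, which are finite-dimensional, so the quotient of $H_n(X,\Z)_{\prim}$ by the image is finitely generated and torsion, hence finite.

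The hard part will be the precise identification of the tube map with the spectral-sequence edge map, and in particular verifying that the image misses at most the contribution of the non-vanishing (invariant) cohomology and the higher Leray terms rather than something larger. Concretely, one must check that the hypothesis $H^{n-1}_{\van}(Y,\mathbb{Q})\neq 0$ guarantees the relevant local system has no trivial summand forcing the $E_2^{1,n-1}$ term to vanish; this is where the nontriviality of vanishing cohomology enters, and it typically relies on the irreducibility of the monodromy action on $H^{n-1}_{\van}$ (Lefschetz's theorem, the hard Lefschetz package, and the fact that the vanishing cycles span) together with a Deligne-type semisimplicity or degeneration argument for the Leray sequence. I would anticipate the subtlety lies in passing from $\mathbb{Q}$-surjectivity back to the integral cofinite statement: one needs that the integral edge map, not merely its rationalization, has finite cokernel, which follows once the rational surjectivity is established because $H_n(X,\Z)_{\prim}$ has finite rank and the image is a finite-index subgroup of a saturated sublattice. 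The cleanest route, which I expect the author to follow, is to work directly with the Wang sequence of the restriction to a generic pencil (a Lefschetz pencil), where the vanishing cycles and their monodromy are explicit, and to deduce cofiniteness from the surjectivity of the corresponding variation map onto primitive homology.
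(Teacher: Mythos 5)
First, a point of order: the paper does not prove this statement at all — it is quoted as Schnell's theorem with the citation \cite{Tube}, so there is no in-paper argument to compare against. The comparison has to be with Schnell's original proof, whose general framework (Leray spectral sequence of the universal smooth hyperplane-section family over $U$, combined with the second projection to $X$) your outline does resemble. But your outline misses the point that makes the theorem hard. The domain of the tube map is the \emph{set} of single pairs $([l],\alpha)$ with $l_*\alpha=\alpha$, and the assertion is that the subgroup generated by their individual tube classes is cofinite. Your step ``the group of pairs $([l],\alpha)$ with $l_*\alpha=\alpha$ feeds precisely into the term $H^1(U,R^{n-1}f_*\mathbb{Q})$'' silently replaces this set by the full group $H_1(U,\mathcal{H}^{\textup{van}}_{n-1})$. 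A general class there is a finite sum $\sum_i l_i\otimes\alpha_i$ satisfying a cycle condition, with the $\alpha_i$ \emph{not} required to be monodromy-invariant, and there is no a priori reason that the ``monomial'' cycles coming from invariant pairs generate a cofinite subgroup of it. Proving exactly this is the technical heart of Schnell's paper (via the Zariski--van Kampen presentation of $\pi_1(U)$ by meridians of the dual variety, the Picard--Lefschetz description of their action, and the structure theory of vanishing lattices); without it, your argument can only show that \emph{sums} of tubes over possibly non-invariant cycles generate cofinitely, which is a strictly weaker statement. Your proposed ``cleanest route'' through a Lefschetz pencil runs into the same wall: the classical identification of $H^n(X,\mathbb{Q})_{\textup{prim}}$ with (a subquotient of) $H^1$ of the local system over the pencil base again concerns the full $H^1$, not classes of invariant pairs, so it does not by itself yield the theorem — if it did, the theorem would be classical and Schnell's paper unnecessary.

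There are also two concrete errors in the supporting steps. The projection of the universal \emph{smooth} hyperplane-section family $\mathcal{X}\subseteq X\times U$ to $X$ is not a $\mathbb{P}^{N-1}$-bundle: its fiber over $x\in X$ is the set of hyperplanes through $x$ meeting $X$ in a smooth surface, i.e.\ an open subset of $\mathbb{P}^{N-1}$ (the complement of a slice of the dual variety), so the projective bundle formula is unavailable; if instead you take the full family over $(\mathbb{P}^N)^*$ to get an honest $\mathbb{P}^{N-1}$-bundle over $X$, the fibers over the base acquire singularities and the local-system Leray setup you rely on breaks down (one then needs the decomposition theorem or a careful treatment of the open fibers). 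Second, the closing deduction ``the remaining Leray terms are finite-dimensional, so the quotient of $H_n(X,\mathbb{Z})_{\textup{prim}}$ by the image is finitely generated and torsion, hence finite'' is a non sequitur: finite-dimensionality of the other graded pieces says nothing about the cokernel being torsion. Cofiniteness is equivalent to surjectivity after tensoring with $\mathbb{Q}$, and that rational surjectivity is precisely what must be proved, not a formal consequence of finiteness of ranks.
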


%We will show for the case of cubic threefold, the statement still holds 

When the tube mapping is restricted to "tubes" over a single primitive vanishing cycle $\alpha_0$, it becomes
\begin{equation}
    \{([l],\alpha_0)\: |\:[l]\in \pi_1(U,t),l_*\alpha_0=\alpha_0\}\to H_n(X,\mathbb Z)_{\textup{prim}}
\end{equation}
%has a cofinite image.
%the image has maximal rank. Equivalently, let $\alpha_0$ be a primitive vanishing cycle, and the fundamental group $\pi_1(\mathcal{T}_v,*)$ corresponds to loops that stabilize the $\alpha_0$ via monodromy.

Now Proposition \ref{Intro_Prop_Clemens_conj} will follow from the following two arguments.

\begin{lemma}
    Suppose $n$ is odd, then the map \eqref{tube mapping} agrees with the map between fundamental groups 
    \begin{equation}\label{eqn_map_on_pi1}
  \pi_1(\mathcal{T}_v,\alpha_0)\to \pi_1(J_{\prim}(X),0) 
\end{equation}
    induced by the topological Abel-Jacobi map \eqref{Intro_eqn_TAJ}.
\end{lemma}
The original argument is in \cite[p.26]{Zhao}. We provide self-contained proof here.
\begin{proof}
   First, every loop $l$ in $U$ based at $t$ fixing a primitive vanishing cycle $\alpha_0$ lifts to a loop $\tilde{l}$ in $\mathcal{T}_v$ based at $\alpha_0$, and vice versa, so the left-hand side of \eqref{tube mapping} is identified to the fundamental group $\pi_1(\mathcal{T}_v,\alpha_0)$.

   To show \eqref{eqn_map_on_pi1} agrees with \eqref{eqn_map_on_pi1}, let $\tilde{l}\subseteq \mathcal{T}_v$ be a loop based at $\alpha_0$. Let $[0,1]\to \tilde{l}$ be a parameterization. Then the image of $\tilde{l}$ under $\Psi_{\textup{top}}$ is determined by a family of $n$-chains $\Gamma_t$ indexed by $t\in [0,1]$ modulo $n$-cycles on $X$, so we can choose $\Gamma_t$ to be the union $\Gamma_0\bigcup\Gamma_t'$ where $\Gamma_t'=\bigcup_{s\in[0,t]}\alpha_s$ as trace of primitive vanishing cycles along the path $[0,1]$. It follows that $\Gamma_1$ is a $n$-chain such that $\partial \Gamma_1=\partial \Gamma_0=\alpha_0$, so the induced map on $\pi_1$ sends $\tilde{l}$ to the image of the $n$-cycle $\Gamma_1-\Gamma_0=\bigcup_{t\in[0,1]}\alpha_t$ in $H_n(X,\mathbb Z)$.
\end{proof}

\begin{proposition}\label{Prop_TubeCubic3fold}
When $X$ is a smooth cubic threefold, the map (\ref{eqn_map_on_pi1}) is surjective.
\end{proposition}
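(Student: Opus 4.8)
The plan is to reduce the statement to the Albanese property of the Fano surface by means of the factorization \eqref{Intro_eqn_TAJ_cubic3fold}. Recall that the composite
$$(F\times F)^{\circ}\xrightarrow{\,e\,} T_v\xrightarrow{\,\Psi_{\textup{top}}\,} J(X)$$
is exactly the restriction to $(F\times F)^{\circ}$ of the Abel-Jacobi map $\Psi$ of \eqref{AJ}. Since the tube map \eqref{eqn_map_on_pi1} is induced by the middle arrow $\Psi_{\textup{top}}$, it suffices to prove that the composite $\pi_1((F\times F)^{\circ})\to \pi_1(J(X),0)$ is surjective; the surjectivity of $\pi_1(T_v,\alpha_0)\to\pi_1(J(X),0)$ then follows at once, since the first homomorphism factors through it.

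First I would note that $(F\times F)^{\circ}$ is the complement in the smooth fourfold $F\times F$ of the incidence divisor $\{(L_1,L_2)\mid L_1\cap L_2\neq\varnothing\}$ (which contains the diagonal), a closed subvariety of complex codimension one. Removing a subset of real codimension at least two from a smooth connected variety induces a surjection on $\pi_1$, so the inclusion gives $\pi_1((F\times F)^{\circ})\twoheadrightarrow \pi_1(F\times F)=\pi_1(F)\times\pi_1(F)$.

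Next, because $J(X)$ is an abelian variety, $\pi_1(J(X),0)\cong H_1(J(X),\Z)$ is abelian; hence the homomorphism $\pi_1(F\times F)\to\pi_1(J(X),0)$ induced by $\Psi$ factors through the abelianization $H_1(F\times F,\Z)=H_1(F,\Z)\oplus H_1(F,\Z)$ and is computed by $\Psi_*$. Writing $a:F\to J(X)$ for the Albanese map, one has $\Psi_*(u,v)=a_*u-a_*v$. Since $J(X)\cong\textup{Alb}(F)$ \cite[Theorem 11.19]{CG}, the Albanese homomorphism $a_*:H_1(F,\Z)\to H_1(J(X),\Z)$ is surjective (its kernel is exactly the torsion subgroup), and taking $v=0$ shows that $\Psi_*$ is surjective.

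Combining the two surjections, the composite $\pi_1((F\times F)^{\circ})\to\pi_1(J(X),0)$ is surjective, and therefore so is $\pi_1(T_v,\alpha_0)\to\pi_1(J(X),0)$. I do not anticipate a genuine obstacle: the only points that need care are the identification of the composite with $\Psi|_{(F\times F)^{\circ}}$, already recorded in \eqref{Intro_eqn_TAJ_cubic3fold}, and the passage to $H_1$, which is legitimate precisely because the target is an abelian variety. Note that this argument yields full surjectivity rather than merely a cofinite image, thereby confirming Conjecture \ref{Conj_Clemens_Tube} for cubic threefolds.
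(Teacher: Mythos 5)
Your proof is correct, but it takes a genuinely different route from the paper's. The paper argues downstream of $T_v$: using Theorem \ref{barT'BlowupThm} it views $T_v$ as the complement of a divisor in $\textup{Bl}_0(\Theta)$, obtains $\pi_1(T_v)\twoheadrightarrow\pi_1(\textup{Bl}_0(\Theta))$, and then proves surjectivity of $\pi_1(\textup{Bl}_0(\Theta))\to\pi_1(J(X))$ by restricting the Abel-Jacobi map to a slice $\{p\}\times F\setminus D_p$, where $L_p$ is a line of the second type; this needs \cite[Lemma 10.7]{CG} to guarantee the slice misses the diagonal (so that it lifts to the blow-up), and then the Albanese diagram \eqref{graphiso}. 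You instead argue upstream: the composite $(F\times F)^{\circ}\to T_v\to J(X)$ is $\Psi|_{(F\times F)^{\circ}}$ by \eqref{Intro_eqn_TAJ_cubic3fold}, surjectivity of the composite on $\pi_1$ forces surjectivity of $\pi_1(T_v,\alpha_0)\to\pi_1(J(X),0)$, and the composite is handled by the divisor-complement surjection $\pi_1((F\times F)^{\circ})\twoheadrightarrow\pi_1(F\times F)$ together with abelianization (legitimate because $\pi_1(J(X))$ is abelian) and the surjectivity of $a_*:H_1(F,\Z)\to H_1(\textup{Alb}(F),\Z)\cong H_1(J(X),\Z)$. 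Both proofs rest on the same two pillars --- surjectivity of $\pi_1$ under removal of a complex-codimension-one set, and $\textup{Alb}(F)\cong J(X)$ --- but your version dispenses with $\textup{Bl}_0(\Theta)$, the choice of a second-type line, the divisor $D_p$, and the lifting argument, so it is more self-contained and elementary; the paper's version, in exchange, works with a single copy of $F$ and establishes along the way the stronger-looking fact that already one slice $\{p\}\times F\setminus D_p$ surjects onto $\pi_1(J(X))$.

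Two small corrections you should fold in. First, $(F\times F)^{\circ}$ as defined in \eqref{eqn_M=L1L2t} is not the complement of the incidence divisor alone: one must also delete the pairs of skew lines whose span is a tangent hyperplane (i.e., those for which $X_t$ is singular). This costs nothing, since the deleted locus is still a proper closed subvariety of $F\times F$ (a divisor, as noted in the proof of Proposition \ref{Prop_72to1}), so the surjection $\pi_1((F\times F)^{\circ})\twoheadrightarrow\pi_1(F\times F)$ you need still holds. Second, the target of \eqref{eqn_map_on_pi1} is $\pi_1(J_{\prim}(X),0)$, so you should add the paper's closing observation that $H_3(X,\Z)=H_3(X,\Z)_{\textup{prim}}$ for a cubic threefold, whence $J_{\prim}(X)=J(X)$ and your conclusion is exactly the assertion of the proposition.
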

\begin{proof}
First of all, $\phi: \mathcal{T}_v\to J(X)$ factors through the inclusion $\mathcal{T}_v\subseteq \textup{Bl}_0(\Theta)$. Moreover, $\mathcal{T}_v\subseteq \textup{Bl}_0(\Theta)$ is a complement of a divisor in a smooth complex manifold, as a smooth loop based can be deformed to be disjoint from a real codimension-two set, there is a surjection $\pi_1(\mathcal{T}_v,*)\twoheadrightarrow \pi_1(\textup{Bl}_0(\Theta),*)$. Therefore, it suffices to show that $\pi_1(\textup{Bl}_0(\Theta),*)\to \pi_1(J(X))$ is surjective.

Next, choose $p\in F$ such that its corresponding line $L_p$ is of the second type on $X$ and let $D_p$ be the divisor of lines that are incident to $L_p$. By Lemma 10.7 of \cite{CG}, $p\in D_p$, it follows that $\{p\}\times F\setminus D_p$ is disjoint from the diagonal. In particular, let $\sigma: \textup{Bl}_{\Delta_F}(F\times F)\to F\times F$ be the blowup map, the restriction of $\sigma^{-1}$ to the domain of $\Psi_p$ is an isomorphism. We define the restricted Abel-Jacobi map
\begin{equation}
   \Psi_p:\{p\}\times F\setminus D_p\to J(X). \label{open AJ}
\end{equation}

$\Psi_p$ lifts to the blowup, so the image of  $\pi_1(\textup{Bl}_0(\Theta),*)\to \pi_1(J(X))$ contains $(\Psi_{p})_*(\pi_1(\{p\}\times F\setminus D_p,*))$ as a subgroup. Thus it suffices to show that $\Psi_p$ induces surjectivity on fundamental groups. 

To show this, note that $\Psi_p$ factors through the inclusion $\{p\}\times F\setminus D_p\subseteq \{p\}\times F$, which induces a surjective map on the fundamental group for the same reason as in the first paragraph of the proof.
Moreover, the map $\{p\}\times F\cong F\to J(X)$ factors through the Albanese map 

\begin{figure}[ht]
    \centering
\begin{equation}\label{graphiso}
\begin{tikzcd}
F\arrow[r,"\Psi"] \arrow[d,"alb"] & J(X)   \\
\textup{Alb}(F)\arrow[ur,"\cong"]
\end{tikzcd}
\end{equation}
\end{figure}{}
\noindent together with the isomorphism $\textup{Alb}(F)\xrightarrow{\cong} J(X)$ \cite{CG}. It follows that $\Psi$ induces an isomorphism between fundamental groups. Therefore, so does $\Psi_p$. Note that $H_3(X,\mathbb Z)=H_3(X,\mathbb Z)_{\textup{prim}}$ for cubic threefold, so the result follows.
\end{proof}

\appendix

\section{Primitive Vanishing Cycles}\label{App_PVC}
Let $X\subseteq \mathbb P^N$ be a smooth projective variety of dimension $n$. Let $U$ be the open subspace of $(\mathbb P^N)^*$ parameterizing smooth hyperplane sections of $X$. 

According to a classical result by Lefschetz, a smooth point of $X^*=(\mathbb P^N)^*\setminus U$ corresponds to a hyperplane section that has only one ordinary node. Choose a line $\mathbb L\subset (\mathbb P^N)^*$ intersecting transversely to a smooth point on $X^*$. Take a holomorphic disk $\Delta$ on $\mathbb L$ centered at that point. Then $\{X_t\}_{t\in \Delta}$ is a one-parameter family of hyperplane sections of $X$ with $X_0$ having a single node and $X_t$ smooth for $t\neq 0$. Let $\mathcal{X}_{\Delta}$ denote the total space, and $B_p\subseteq \mathcal{X}_{\Delta}$ a small neighborhood of the node $p\in X_0$. When $|t|$ is small enough, the manifold $X_t\bigcap B_{p}$ is called the Milnor fiber. It deformation retracts to a topological $(n-1)$-sphere $S^{n-1}$. Moreover, the sphere $S^{n-1}$ specializes to the node $p$ as $t$ moves to $0$. As a result, the homology class of $S^{n-1}$ is zero in homology of $X$ and defines an element in the vanishing homology $H_{n-1}(X_t,\Z)_{\textup{van}}:=\ker (H_{n-1}(X_t,\Z)\to H_{n-1}(X,\Z))$.

By Poincar\'e duality, the class $[S^{n-1}]$ lies in the \textit{vanishing cohomology} defined as the kernel of Gysin homomorphism
\begin{equation}\label{Eqn_VanishingCoh}
    H^{n-1}_{\textup{van}}(X_t,\mathbb Z):=\ker(H^{n-1}(X_t,\mathbb Z)\to H^{n+1}(X,\mathbb Z)).
\end{equation}

\begin{definition}\normalfont
The cohomology class $\delta=[S^{n-1}]\in \Hv^{n-1}(X_t,\mathbb Z)$ is called the \textit{vanishing cycle} of the nodal degeneration $\{X_t\}_{t\in \Delta}$. 
\end{definition}

Let $\mathcal{H}_{\textup{van}}^{n-1}$ be the local system on $U$ whose stalk at $t$ is the vanishing cohomology $H^{n-1}_{\textup{van}}(X_t,\Z)$. Let $T$ denote the \'etale space of $\mathcal{H}_{\textup{van}}^{n-1}$, then 
\begin{equation}
  T\to U  \label{eqn_T-to-Osm}
\end{equation}
is an analytic covering space. Note that $T$ has possibly infinitely many connected components. For example, when $n$ is odd, $\alpha$ and $2\alpha$ lie in different components since they have different self-intersections.

\begin{proposition} \label{Prop_Tv}
$T$ has a unique connected component $\mathcal{T}_v$ that contains the vanishing cycle of nodal degeneration.
\end{proposition}
\begin{proof}
This follows from the fact that the dual variety $X^*$ is irreducible and all vanishing cycles of nodal degenerations $\{X_t\}_{t\in \Delta}$ obtained from above are conjugate to each other \cite[Proposition 3.23]{Voisin2}.
\end{proof}
\begin{definition}\label{Def_Tv}
\normalfont We call $\mathcal{T}_v$ the primitive vanishing cycle component on the hyperplane sections of $X$. We call $\alpha_t\in \Hv^{n-1}(X_t,\Z)$ a \textit{primitive vanishing cycle} if $\alpha_t$ lies in $\mathcal{T}_v$.
\end{definition}

Equivalently, Let $t'\in U$ be a point close to $t_0\in X^*$ where $X_{t_0}$ has an ordinary node. Let $\delta$ be a vanishing cycle associated to the nodal degeneration as $X_{t'}$ approaches $X_{t_0}$, then a class $\alpha\in H^{n-1}_{van}(X_t,\mathbb Z)$ is a \text{primitive vanishing cycle} if there exists a smooth path $l\subseteq U$ joining $t$ to a point $t'$ and the monodromy image $l_*(\delta)=\alpha$.

\begin{definition}
\normalfont Denote $PV_t$ the fiber of $\pi_v:\mathcal{T}_v\to U$ over $t$. Call $PV_t$ the set of primitive vanishing cycles on the hyperplane section $X_t$.
\end{definition}

By definition, the set $PV_t$ of all primitive vanishing cycles on a smooth hyperplane section $X_t$ is the orbit of a single vanishing cycle $\alpha_t$ under the monodromy action
\begin{equation}\label{eqn_GlobalMonodromy-General}
    \rho_{\textup{van}}:\pi_1(U,t)\to \textup{Aut}H^{n-1}_{\textup{van}}(X_{t},\Z).
\end{equation}

\begin{proposition}\label{Prop_PVC-generation}
The set of all primitive vanishing cycles in $H^{n-1}_{\textup{van}}(X_{t},\Z)$ generates a sublattice of full rank.
\end{proposition}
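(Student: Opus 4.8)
The plan is to show that the sublattice generated by $PV_t$ is a monodromy-invariant subspace and then to invoke irreducibility of the monodromy action on vanishing cohomology to conclude that it is everything. Write $\Lambda\subseteq H^{n-1}_{\textup{van}}(X_t,\Z)$ for the sublattice generated by the set $PV_t$, and set $V:=\Lambda\otimes\mathbb{Q}\subseteq H^{n-1}_{\textup{van}}(X_t,\mathbb{Q})$; the assertion that $\Lambda$ has full rank is equivalent to $V=H^{n-1}_{\textup{van}}(X_t,\mathbb{Q})$. First I would note that $V$ is stable under the monodromy representation $\rho_{\textup{van}}$ of \eqref{eqn_GlobalMonodromy-General}: by definition $PV_t$ is the $\pi_1(U,t)$-orbit of a single vanishing cycle $\alpha_t$, so each $\gamma\in\pi_1(U,t)$ merely permutes the generating set $PV_t$ and therefore preserves $\Lambda$, hence $V$. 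Thus $V$ is a $\pi_1(U,t)$-subrepresentation of $H^{n-1}_{\textup{van}}(X_t,\mathbb{Q})$.

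The key input is the classical theorem of Lefschetz that $\rho_{\textup{van}}$ acts irreducibly on $H^{n-1}_{\textup{van}}(X_t,\mathbb{Q})$ (cf.\ \cite{Voisin2}); concretely, this rests on the two facts already in play here, namely that in a Lefschetz pencil through $X_t$ the vanishing cohomology is spanned over $\mathbb{Q}$ by the vanishing cycles of the pencil, and that all such vanishing cycles are mutually conjugate (the same conjugacy invoked in Proposition \ref{Prop_Tv}). Granting irreducibility, there are two cases. If $V=0$ then $\alpha_t=0$, so by conjugacy every vanishing cycle vanishes and the spanning property forces $H^{n-1}_{\textup{van}}(X_t,\mathbb{Q})=0$, whence $\Lambda$ trivially has full rank $0$. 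Otherwise $V\neq 0$, and being a nonzero invariant subspace of an irreducible representation it must equal all of $H^{n-1}_{\textup{van}}(X_t,\mathbb{Q})$. In either case $\Lambda$ has full rank.

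I do not expect a genuine obstacle here beyond correctly citing the classical machinery: the entire content is that a nonzero invariant subspace of an irreducible representation is the whole space. The one point requiring care is to use irreducibility over $\mathbb{Q}$ (equivalently over $\C$), rather than mere indecomposability, so that ``nonzero invariant subspace implies full'' is legitimate; this is exactly what the Lefschetz theory supplies, once one matches the intrinsic definition \eqref{Eqn_VanishingCoh} of vanishing cohomology with the pencil description used to prove spanning. As a consistency check in the case that drives this paper, $H^{2}_{\textup{van}}(X_t,\Z)$ is the $\mathbb{E}_6$ lattice of rank $6$ and $PV_t$ is its full set of $72$ roots, which patently span.
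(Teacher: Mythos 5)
Your proof is correct, but it takes a different route than the paper. The paper's entire proof is a citation of \cite[Lemma 2.26]{Voisin2}: the vanishing cycles of a Lefschetz pencil through $X_t$ span $H^{n-1}_{\textup{van}}(X_t,\mathbb{Q})$, and since (by the conjugacy statement already used in Proposition \ref{Prop_Tv}) each of those pencil vanishing cycles lies, up to sign, in the monodromy orbit $PV_t$, the set $PV_t$ contains a spanning set and full rank is immediate. You instead argue via invariance plus irreducibility: $\Lambda$ is monodromy-stable because $PV_t$ is an orbit, and a nonzero invariant subspace of the irreducible representation $H^{n-1}_{\textup{van}}(X_t,\mathbb{Q})$ must be everything. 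This is valid, and it is a genuinely different key lemma --- the irreducibility theorem rather than the spanning lemma --- but note that it is in a sense a detour: the standard proof of irreducibility (Voisin's Theorem 3.27, say) is itself derived from the spanning lemma together with conjugacy and the Picard--Lefschetz formula, so you are invoking a strictly stronger result to deduce a statement that the weaker input already gives in one line. Moreover, your treatment of the degenerate case $V=0$ falls back on the spanning property anyway, so your argument does not actually dispense with that input; it only repackages the non-degenerate case. What your route buys is conceptual cleanliness (the statement becomes a formal consequence of ``orbit generates an invariant subspace'' plus irreducibility, with no need to exhibit a specific spanning set inside $PV_t$); what the paper's route buys is brevity and economy of hypotheses.
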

\begin{proof}
It is well known that the vanishing cycles in a Lefschetz pencil generate the $H^{n-1}_{\textup{van}}(X_{t},\mathbb Q)$ \cite[Lemma 2.26]{Voisin2}. These vanishing cycles are a subcollection of primitive vanishing cycles.
\end{proof}

\section{Compactification of Local System}\label{App_Schnell}

\subsection{Schnell's Completion}
Suppose $(\mathcal{H},Q)$ is a polarized variation of Hodge structure of even weight over a quasi-projective variety $B_0$. Suppose $B_0$ is a Zariski open subset of a smooth projective variety $B$, Schnell \cite{SchExtLocus} constructed a completion of the space $T_{\Z}$, the \'etale space of the local system $\mathcal{H}_{\mathbb Z}$.

More explicitly, assume that $\mathcal{H}$ has weight $2n$. The data $(\mathcal{H},Q)$ consists of a $\mathbb Z$-local system over $B_0$, a flat connection $\nabla$ on $\mathcal{H}_{\mathbb C}=\mathcal{H}\times_{\mathbb Z}\mathcal{O}_{B_0}$, Hodge bundles $F^p\mathcal{H}_{\mathbb C}$ and a nondegenerate pairing 
$$Q:H_{\mathbb Q}\times H_{\mathbb Q}\to \mathbb Q$$
satisfying the Hodge-Riemann conditions.

Consider $F^n\mathcal{H}$ the associated Hodge bundle, i.e., the subbundle whose fiber at $p\in B_0$ is $F^n\mathcal{H}_p$, the $n$-th Hodge filtration of the complex vector space $\mathcal{H}_p$. Then it is shown in Lemma 3.1 from \cite{SchExtLocus} that for each connected component $T_{\lambda}/B_0$ of $T_{\mathbb Z}/B_0$, the natural mapping 
$$T_{\lambda}\to T(F^n\mathcal{H})$$
$$\alpha\mapsto Q(\alpha,\cdot)$$
is finite, where $T(F^n\mathcal{H})$ is the underlying analytic space of the Hodge bundle. 

Moreover, according to Saito's Mixed Hodge Modules theory, there is a Hodge module $M$ underlying a filtered $\mathcal{D}_{B_0}$-module $(\mathcal{M},F_{\bullet}\mathcal{M})$ supported on $B$, as the minimal extension of $(\mathcal{H},\nabla)$. 

Schnell considered the space $T(F_{n-1}\mathcal{M})$ as the analytic spectrum of the $(n-1)$-th filtration of $\mathcal{M}$ and showed that the analytic closure of the image of the composite of 
$$\varepsilon: T_{\lambda}\to T(F^n\mathcal{H})\to T(F_{n-1}\mathcal{M})$$
is still analytic. Therefore, it extends to a finite analytic covering by Grauert's theorem, so there is a normal analytic space $\bar{T}_{\lambda}$ extending $T_{\lambda}$.

\begin{lemma}\cite[Theorem 4.2, 23.1]{SchExtLocus} \label{Lemma_Schnell_ext}
 There is a normal holomorphically convex analytic space $\bar{T}_{\lambda}$ containing $T_{\lambda}$ as an open dense subspace, and a finite holomorphic mapping 
 $$\bar{\varepsilon}:\bar{T}_{\lambda}\to T(F_{n-1}\mathcal{M})$$
with discrete fibers that extend $\varepsilon$.

\end{lemma}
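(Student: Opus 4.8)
The statement is Schnell's extension theorem \cite{SchExtLocus}, and my plan is to follow the strategy indicated just before the lemma: realize $\bar{T}_{\lambda}$ as the analytic closure of $\varepsilon(T_{\lambda})$ inside $T(F_{n-1}\mathcal{M})$, the whole difficulty being to show that this closure is an honest analytic set and that the projection to the base is finite. Two inputs are taken for granted. First, Schnell's Lemma 3.1 makes
\[
\varepsilon|_{B_0}\colon T_{\lambda} \longrightarrow T(F^n\mathcal{H})
\]
finite and, in fact, injective: the first Hodge--Riemann bilinear relation gives $(F^n\mathcal{H})^{\perp_Q}=F^{n+1}\mathcal{H}$, and a real class $\alpha\in F^{n+1}\mathcal{H}\cap\overline{F^{n+1}\mathcal{H}}=0$ must vanish, so $\alpha$ is recovered from the functional $Q(\alpha,\cdot)|_{F^n\mathcal{H}}$. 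Hence $\varepsilon$ is an isomorphism of $T_{\lambda}$ onto a locally closed analytic subset over $B_0$. Second, Saito's theory supplies the coherent $\mathcal{O}_B$-module $F_{n-1}\mathcal{M}$ on all of $B$ extending $F^n\mathcal{H}$, so that $T(F_{n-1}\mathcal{M})$ is a space over $B$ and the entire problem localizes near the boundary $D=B\setminus B_0$.

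I would first reduce to a local model: after a log resolution I may assume $D$ is a simple normal crossings divisor and work over a polydisc with $B_0=(\Delta^*)^a\times\Delta^{b}$, so that the local fundamental group is the abelian group generated by the loops around the branches of $D$. Over such a polydisc $\varepsilon$ has finitely many holomorphic branches $s_1,\dots,s_m$, multivalued sections of $T(F_{n-1}\mathcal{M})$ permuted by the local monodromy. For every fibrewise-polynomial holomorphic function $\phi$ on $T(F_{n-1}\mathcal{M})$ the power sums $\sum_i\phi(s_i)$ are single-valued holomorphic functions on $B_0$, and through Newton's identities these symmetric functions are the coefficients of the monic equations cutting out the multisection. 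The problem is thereby reduced to showing that each such symmetric function extends holomorphically across $D$.

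This extension is the crux, and it is exactly where asymptotic Hodge theory enters. The pairing $Q(\alpha,\cdot)$ of a flat multivalued lattice section against a section of the canonical extension has moderate growth: writing flat sections in a frame of Deligne's canonical extension produces at worst polynomials in $\log|t_i|$, so each symmetric function is single-valued, holomorphic on $B_0$, and grows more slowly than any power of $1/|t_i|$. By the removable-singularity theorem such a function has no polar Laurent coefficients and therefore extends holomorphically over $D$. The required moderate-growth bounds are precisely the output of Schmid's nilpotent orbit theorem and the norm estimates of Cattani--Kaplan--Schmid, repackaged by Saito through the coherent sheaf $F_{n-1}\mathcal{M}$. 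Consequently $\overline{\varepsilon(T_{\lambda})}$ is locally the common zero locus of monic polynomials with holomorphic coefficients, hence an analytic subset, presented as a branched cover of the base with discrete fibres. Grauert's finite-mapping theorem then turns this into a complex space finite over $T(F_{n-1}\mathcal{M})$; taking its normalization yields a normal complex space $\bar{T}_{\lambda}$ with a finite holomorphic $\bar{\varepsilon}$ extending $\varepsilon$ and having discrete fibres, and $T_{\lambda}$ is open and dense because $\varepsilon$ is an isomorphism over the dense open $B_0$.

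The main obstacle is thus the analyticity of the closure, which rests entirely on the moderate-growth estimate above; without the polarization, propagated to the limit mixed Hodge structure, the symmetric functions could acquire essential singularities along $D$ and the closure would fail to be analytic. The remaining assertion, holomorphic convexity, is the separate content of \cite[Theorem 23.1]{SchExtLocus}: one constructs the Remmert reduction of $\bar{T}_{\lambda}$ directly, the moderate-growth estimates forcing the fibres to stay discrete in the directions of nontrivial monodromy while the period map is proper transverse to the Hodge-class locus, so that the bounded directions are contracted and the quotient is Stein. For the applications in this paper the relevant component $T_{\lambda}=T_v$ is finite over $B_0=U$, so $\bar{T}_v$ is finite over the projective base $(\mathbb{P}^4)^*$ and holomorphic convexity is automatic.
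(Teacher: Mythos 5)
The paper does not actually prove Lemma \ref{Lemma_Schnell_ext}: it is quoted from Schnell, and the paragraphs preceding it only record the skeleton of his construction (the fiberwise-finite map $\varepsilon$ from Lemma~3.1 of \cite{SchExtLocus}, analyticity of the closure of its image in $T(F_{n-1}\mathcal{M})$, then Grauert's theorem and normalization). Your outline reproduces that skeleton faithfully, and your fiberwise injectivity argument for $\varepsilon$, via $(F^n\mathcal{H})^{\perp_Q}=F^{n+1}\mathcal{H}$ and $F^{n+1}\mathcal{H}\cap\overline{F^{n+1}\mathcal{H}}=0$ in weight $2n$, is correct.

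The genuine gap is in the crucial step, analyticity of the closure. Your argument presupposes that over a boundary polydisc the image of $\varepsilon$ is a multisection with \emph{finitely many} branches $s_1,\dots,s_m$, so that power sums and Newton's identities make sense. But $T_\lambda\to B_0$ is in general an infinite covering space --- this is exactly the situation the lemma is designed for, and the paper's own appendix stresses it: for hypersurfaces of $\mathbb{P}^4$ of degree at least $4$, $T_v\to U$ has infinitely many sheets. Near a boundary point with infinite local monodromy, the restriction of $T_\lambda$ to $(\Delta^*)^a\times\Delta^b$ has infinitely many sheets (infinitely many connected components, and components that are themselves infinite covers), so there are no symmetric functions to extend, and moderate growth of each individual branch cannot rescue this: an infinite union of multisections need not have analytic closure. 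The actual content of Schnell's proof is precisely the dichotomy your argument skips: using the Cattani--Kaplan--Schmid norm estimates, one shows that all but finitely many sheets eventually leave every bounded subset of $T(F_{n-1}\mathcal{M})$ as one approaches the boundary --- only classes invariant under a finite-index subgroup of the local monodromy can have bounded pairings against the frame of $F_{n-1}\mathcal{M}$ --- and only after this local finiteness is established can a branched-cover argument of the kind you describe be run. When the monodromy of $T_\lambda\to B_0$ happens to be finite, your proof does go through, but then it amounts to Stein's lemma (Lemma \ref{analytic_cover_closure}), which the paper invokes separately; it does not give the general statement. A last, minor point: your treatment of holomorphic convexity is more complicated than needed, since $T(F_{n-1}\mathcal{M})$ is the analytic spectrum of a coherent sheaf on the projective variety $B$, hence holomorphically convex, and holomorphic convexity passes to any space finite over it.
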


Schnell defines $\bar{T}_{\mathbb Z}$ as the union $\bigcup_{\lambda}\bar{T}_{\lambda}$. The closed analytic subscheme $\bar{\varepsilon}^{-1}(0)\subseteq \bar{T}_{\Z}$ is defined to be the \textit{extended locus of Hodge classes}.

When the variation of the Hodge structure comes from the vanishing cohomology on the universal smooth hyperplane sections of a smooth hypersurface in projective space, the minimal extension $\mathcal{M}$ can be described as Griffiths' residues.

\subsection{Finite Monodromy}
When $T$ parameterizes only Hodge classes, the Hodge bundle is trivial, and $T\to B_0$ has finite monodromy. Schnell's compactification (cf. Lemma \ref{Lemma_Schnell_ext}) becomes the compactification due to Stein \cite{Stein} and Grauert-Remmert \cite{GR}. Also see \cite[p.197]{DG}.
%Stein's Completion of Finite Cover

\begin{lemma}\label{analytic_cover_closure}
Let $U$ be a complex manifold and $f:W\to U$ a finite analytic cover. Assume $\bar{U}$ is a normal analytic space containing $U$ as an open dense subspace. Then there is a normal analytic space $\bar{W}$ containing $W$ as a dense open subspace, together with finite analytic branched covering map $\bar{f}:\bar{W}\to \bar{U}$, which agrees with $f$ on $W$. Moreover, when $\bar{U}$ is projective, $\bar{W}$ is also projective.
\end{lemma}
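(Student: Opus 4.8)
This statement is the Grauert--Remmert extension theorem for finite coverings specialized to our situation; the plan is to construct $\bar W$ as the analytic normalization of $\bar U$ in the covering $W$, to extend $f$ across the boundary $A:=\bar U\setminus U$ by a local Riemann-extension argument, and then to read off the asserted properties.

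First I would record the reductions. In the cases to which we apply this, $A$ is a closed nowhere-dense analytic subset of the normal space $\bar U$ (e.g.\ $\bar U=(\mathbb{P}^4)^*$ and $A=X^*$ the dual variety), and I assume this, since analyticity of $A$ is what makes the extension analytic. Decomposing $\bar U$ into its irreducible components and intersecting with $U$, it suffices to treat $\bar U$ irreducible; likewise I may take $W$ connected, handling the finitely many components of the covering separately and forming their disjoint union at the end. Because $f$ is a finite \emph{unramified} covering of the manifold $U$, the total space $W$ is itself a complex manifold, hence normal; this is what forces $\bar W$ to be normal rather than merely reduced.

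The heart of the argument is the local extension of $f$ across $A$. Fix $a\in A$ and a small normal Stein neighborhood $V\subseteq\bar U$, and write $V^\circ=V\cap U=V\setminus A$. Then $f^{-1}(V^\circ)\to V^\circ$ is a finite covering of some degree $d$. Shrinking $V$, I would embed $f^{-1}(V^\circ)\hookrightarrow V^\circ\times\mathbb{C}^N$ by finitely many holomorphic functions bounded near $A$; the elementary symmetric functions of these coordinate functions along the $d$ sheets are bounded holomorphic functions on $V^\circ$, and by the Riemann extension theorem on the normal space $V$ they extend holomorphically across $A$. Equivalently, the analytic set $f^{-1}(V^\circ)\subseteq V^\circ\times\mathbb{C}^N$ has locally finite volume near $A$, so by the theorem on extension of analytic sets (Remmert--Stein / Bishop) its closure in $V\times\mathbb{C}^N$ is analytic; projecting to $V$ and normalizing yields a normal finite branched cover $\bar W_V\to V$ restricting to the given covering over $V^\circ$. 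This step, where the hypotheses that $\bar U$ is normal and $A$ is a nowhere-dense analytic set are indispensable, is the one I expect to be the main obstacle; once it is in place the remainder is formal.

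It remains to globalize and to record the properties. The local models $\bar W_V$ are unique: two normal finite covers of $V$ that agree over the dense open $V^\circ$ are canonically isomorphic, since an isomorphism over $V^\circ$ extends across $A$ by normality. Hence the $\bar W_V$ glue to a normal analytic space $\bar W$ with a finite analytic map $\bar f:\bar W\to\bar U$ satisfying $\bar f|_W=f$ and $W=\bar f^{-1}(U)$ open and dense; concretely $\bar W$ is the analytic normalization of $\bar U$ in $W$, which also yields uniqueness up to isomorphism. Finally, if $\bar U$ is projective, then $\bar f$ is finite and therefore proper, so $\bar W$ is compact, and the pullback $\bar f^*\mathcal{O}_{\bar U}(1)$ of an ample line bundle is ample because $\bar f$ is finite; a compact normal analytic space finite over a projective variety is algebraic and projective by GAGA (equivalently Grauert's theorem), so $\bar W$ is projective.
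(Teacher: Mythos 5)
Your proposal takes a genuinely different route from the paper: the paper does not prove the analytic extension statement at all, but quotes it from Stein and Grauert--Remmert (via Dethloff--Grauert), and only writes out the algebraicity claim --- namely that $\mathcal{F}=\bar{f}_*\mathcal{O}_{\bar{W}}$ is a coherent analytic sheaf on $\bar{U}$, hence by GAGA an algebraic coherent sheaf of $\mathcal{O}_{\bar{U}}$-algebras when $\bar{U}$ is projective, so that $\bar{W}\cong\textup{Spec}_{\mathcal{O}_{\bar{U}}}(\mathcal{F})$ is algebraic and, being finite over a projective variety, projective. You instead attempt to reconstruct the quoted theorem itself. One point in your favor: your added hypothesis that $A=\bar{U}\setminus U$ be a nowhere-dense \emph{analytic} subset is not pedantry --- the lemma as printed omits it and is false without it. For instance, if $K\subset\mathbb{C}$ is a Cantor set, a double cover of $\mathbb{C}\setminus K$ whose monodromy is nontrivial around infinitely many disjoint clopen pieces of $K$ admits no finite normal extension over $\mathbb{C}$: the branch locus of any such extension would be an analytic, hence finite, subset of $K$, forcing trivial monodromy around small loops encircling clopen pieces that miss it. In all of the paper's applications the boundary is the dual variety $X^*$, so the hypothesis holds.

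However, the step you defer as ``the main obstacle'' --- producing finitely many holomorphic functions on $f^{-1}(V^{\circ})$, bounded near $A$, which embed the cover (equivalently, the finite-volume estimate needed to invoke Bishop/Remmert--Stein) --- is not a technical detail to be supplied later: it is the entire analytic content of the Grauert--Remmert theorem, and nothing in your sketch indicates where such functions would come from. The classical proof does not construct this embedding directly; it stratifies $A$, extends across the smooth codimension-one stratum using the explicit classification of finite covers of $\Delta^{n-1}\times\Delta^{*}$, and then extends the resulting branched cover across the remaining codimension-$\geq 2$ set by a separate argument. The rest of your outline (symmetric functions plus Riemann extension on the normal space $V$, uniqueness and gluing of local models, $\bar{W}$ as the normalization of $\bar{U}$ in $W$) is correct and standard, and your projectivity argument is a valid alternative to the paper's, with the caveat that ``pullback of ample under finite is ample'' on analytic spaces should be run through Grauert's positivity criterion or the pushforward/relative-Spec argument, since naively pulling back a positively curved metric degenerates along the ramification locus. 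So, read as a self-contained proof, your proposal has a genuine gap exactly at its central step; read as a reduction to the cited Stein/Grauert--Remmert theorem, it is sound and in substance coincides with what the paper invokes.
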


We will only give an account for the algebraicity argument. The pushforward $\mathcal{F}=\bar{f}_*\mathcal{O}_W$  defines an analytic coherent sheaf on $\bar{U}$. By Serre's GAGA, the projectivity of $\bar{U}$ implies that $\mathcal{F}$ is an algebraic coherent sheaf. Then by definition of the relative spec construction \cite[Exercise II.5.17]{Hartshorne}, $\bar{W}$ is isomorphic to $\textup{Spec}_{\mathcal{O}_{\bar{W}}}{F}$, and therefore is algebraic.

\subsection{Infinite Monodromy}
When the variation of Hodge structure comes from hyperplane sections of a smooth cubic threefold, the monodromy is finite. However, this is very rare in general.

\begin{lemma}
Assume $X\subseteq \mathbb P^{n+1}$ is a smooth hypersurface of odd dimension. Let $U\subseteq (\mathbb P^{n+1})^*$ be the open subspace parameterizing smooth hyperplane sections. Then $T\to U$ has finite monodromy if and only if the vanishing cohomology $H^{n-1}_{\textup{van}}(X_t,\Z)$ is concentrated on Hodge type.
\end{lemma}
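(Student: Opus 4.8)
The plan is to reduce the statement to a signature computation via the Hodge--Riemann bilinear relations. Write $\Lambda=H^{n-1}_{\textup{van}}(X_t,\Z)$, let $V=\Lambda\otimes\R$, and let $Q$ denote the polarization (intersection) form; since $n$ is odd the weight $n-1=2k$ is even, so $Q$ is symmetric. Let $\Gamma\subseteq\Aut(\Lambda,Q)$ be the image of the monodromy representation $\rho_{\textup{van}}$. The Hodge--Riemann relations say that $Q$ restricted to the real subspace $V$ is definite on each real piece $(V^{p,q}\oplus V^{q,p})\cap V$ with sign $(-1)^{(p-q)/2}$; hence $Q$ is globally definite if and only if all the nonzero Hodge pieces share one parity of $(p-q)/2$. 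Using Griffiths' description of the Hodge filtration of a smooth hypersurface section through its Jacobian ring, the nonzero pieces $H^{p,q}_{\textup{van}}(X_t)$ occupy a contiguous range of degrees with no gaps and always contain the middle $(k,k)$; therefore $Q$ is definite precisely when $H^{k+1,k-1}_{\textup{van}}=0$, i.e.\ exactly when the vanishing cohomology is concentrated in Hodge type $(k,k)$. Thus the whole lemma is equivalent to the assertion: \emph{$\Gamma$ is finite if and only if $Q|_V$ is definite.}

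One direction is immediate. If $Q$ is definite, then $\Aut(\Lambda,Q)$ is a discrete subgroup of the compact orthogonal group $O(V,Q)$, hence finite (the isometry group of a definite lattice is finite), and $\Gamma\subseteq\Aut(\Lambda,Q)$ is finite a fortiori.

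For the converse I would argue directly, reusing facts already in the paper. Recall that the vanishing cycles span $V$ (Proposition \ref{Prop_PVC-generation}), that they form a single $\Gamma$-orbit (conjugacy of vanishing cycles, as in the proof of Proposition \ref{Prop_Tv}), and that---because the fibre dimension $n-1$ is even---each vanishing cycle $\delta$ gives a genuine reflection $r_\delta\in\Gamma$ via Picard--Lefschetz \eqref{PLeqn}, whose $(-1)$-eigenline is $\R\delta$. Now suppose $\Gamma$ is finite. Averaging any inner product over $\Gamma$ produces a $\Gamma$-invariant positive definite symmetric form $B$ on $V$, so $A=B^{-1}Q$ is $\Gamma$-equivariant and $B$-self-adjoint, hence $\R$-diagonalizable with real eigenvalues. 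Since $A$ commutes with $r_\delta$ it preserves the line $\R\delta$, so every vanishing cycle is an eigenvector, $A\delta=\lambda_\delta\,\delta$; and for $\delta'=\gamma\delta$ with $\gamma\in\Gamma$ one gets $A\delta'=\gamma A\delta=\lambda_\delta\,\delta'$, so $\lambda_{\delta'}=\lambda_\delta$. As the vanishing cycles span $V$ this forces $A=\lambda\,\Id$, that is $Q=\lambda B$ with $\lambda\neq 0$, so $Q$ is definite. Feeding this back through the first paragraph yields that $H^{n-1}_{\textup{van}}(X_t)$ is of pure type $(k,k)$.

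The genuinely delicate points are the two geometric inputs. First, I expect the main obstacle to be the reconciliation of ``definite'' with ``concentrated in Hodge type'': the bare signature statement only forces the nonzero pieces to lie in one parity class of $(p-q)/2$, and ruling out stray pieces at $|p-q|\in\{4,8,\dots\}$ coexisting with the middle is exactly what the no-gap property of the Hodge decomposition of a smooth hypersurface section supplies, so this must be cited carefully (it is where smoothness of $X$, not just of the sections, enters). Second, the eigenvalue computation in the converse relies essentially on the conjugacy and spanning of the vanishing cycles, which hold because $X$ is a smooth hypersurface: its dual variety is irreducible with generic point a one-nodal section, placing us in the Lefschetz-pencil setting, so that $\Gamma$ is generated by the reflections $r_\delta$ and these $\delta$ form one orbit (see \cite{Voisin2} and Section \ref{Section_Tvbar}). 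An alternative route for the converse, which I would keep in reserve, is to pass to a finite \'etale cover trivializing the monodromy, invoke the theorem of the fixed part to conclude the period map is constant, and then use Griffiths' Jacobian-ring formula to see that a vanishing differential forces $H^{k+1,k-1}_{\textup{van}}=0$; this bypasses the averaging argument but relies on the same no-gap input.
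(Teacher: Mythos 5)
Your proposal is correct, and on the easy direction it coincides with the paper: the paper's entire proof of sufficiency is the one-line observation that the intersection pairing is definite on $H^{\frac{n-1}{2},\frac{n-1}{2}}(X_t,\C)$, which is exactly your compact-orthogonal-group argument. The real difference is in the necessity (finite monodromy $\Rightarrow$ concentrated in Hodge type): the paper does not prove this at all but cites Terasoma \cite[p.295]{Terasoma}, whereas you supply a complete argument. Your averaging construction --- a $\Gamma$-invariant definite form $B$, the operator $A=B^{-1}Q$ commuting with every Picard--Lefschetz reflection $r_\delta$ (genuine reflections here because the weight $n-1$ is even), and conjugacy plus spanning of the vanishing cycles forcing $A=\lambda\,\Id$, hence $Q$ definite --- is a hands-on proof of Schur's lemma for the monodromy representation, whose irreducibility is precisely what conjugacy and spanning encode (cf.\ Propositions \ref{Prop_Tv} and \ref{Prop_PVC-generation}); you then convert definiteness into Hodge-theoretic concentration via Hodge--Riemann together with the no-gap (contiguity) property of the Jacobian-ring Hodge pieces, and you correctly flag that this last step is where the subtlety lies, since definiteness alone only fixes the parity of $(p-q)/2$. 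What each approach buys: the paper's citation is two lines; yours is self-contained, uses only inputs already quoted elsewhere in the paper, and makes visible exactly where the geometry enters (your ``reserve'' argument via the theorem of the fixed part and infinitesimal Torelli is, in spirit, closer to what the cited reference does). One small correction: the contiguity of the nonzero graded pieces is a statement about the Artinian complete-intersection Jacobian ring of the smooth section $X_t\subseteq\mathbb P^n$, so it is smoothness of $X_t$, not of $X$, that enters there; smoothness of $X$ is instead what underwrites the Lefschetz-pencil inputs (irreducibility of the dual variety, hence conjugacy of the vanishing cycles), contrary to your parenthetical remark.
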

\begin{proof}
The sufficiency is straightforward since the intersection pairing is definite on the subspace $H^{\frac{n-1}{2},\frac{n-1}{2}}(X_t,\C)$. The necessity can be found in \cite[p.295]{Terasoma}. 
\end{proof}

\begin{corollary}
When $X$ is a hypersurface of $\mathbb P^4$ with degree at least $4$, $T\to U$ has infinite global monodromy and $$\mathcal{T}_v\to U$$ is a covering space of infinite sheets.
\end{corollary}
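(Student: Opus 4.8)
The plan is to deduce the statement from the previous lemma, which reduces the finiteness of the monodromy of $T\to U$ to a Hodge-theoretic property of the vanishing cohomology. The first step is to identify $H^2_{\van}(X_t,\Z)$ with the primitive cohomology of the hyperplane section. Since $X\subseteq\BP^4$ is a smooth threefold, the Lefschetz hyperplane theorem gives $H^2(X,\Z)\cong H^2(\BP^4,\Z)=\Z\cdot H$, so $H^2(X)$ is spanned by the ambient hyperplane class $H$, whose restriction to $X_t$ is the hyperplane class $h$. Using the projection formula $\langle i_*\alpha,\beta\rangle_X=\langle\alpha,i^*\beta\rangle_{X_t}$ for the inclusion $i:X_t\hookrightarrow X$ together with the nondegeneracy of the pairing $H^4(X)\times H^2(X)\to\C$ (Poincar\'e duality on $X$), a class $\alpha$ lies in $\ker\big(i_*:H^2(X_t)\to H^4(X)\big)$ if and only if it pairs trivially with $i^*H^2(X)=\Z\cdot h$, that is, if and only if $\alpha\cdot h=0$. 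Hence $H^2_{\van}(X_t)=h^{\perp}=H^2_{\prim}(X_t)$ as rational Hodge structures.

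Next I would compute the Hodge type. The section $X_t$ is a smooth surface of degree $d\ge 4$ in $\BP^3$, so by adjunction $K_{X_t}=\mathcal O_{X_t}(d-4)$ and $h^{2,0}(X_t)=p_g(X_t)=h^0\big(\mathcal O_{\BP^3}(d-4)\big)=\binom{d-1}{3}\ge 1$. Because $h\in H^{1,1}$, the cup product of a $(2,0)$-class with $h$ lands in the vanishing $(3,1)$-part of $H^4$, forcing $H^{2,0}(X_t)\subseteq h^{\perp}$; thus the $(2,0)$-part of $H^2_{\van}(X_t)$ equals $H^{2,0}(X_t)\ne 0$. In particular the vanishing Hodge structure fails to be concentrated in type $(1,1)$, and the previous lemma yields that $T\to U$ has infinite monodromy.

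For the second assertion, the number of sheets of $\pi_v:T_v\to U$ equals the cardinality of the fiber $PV_t$, which is the orbit of a single primitive vanishing cycle $\alpha_0$ under the monodromy action \eqref{eqn_GlobalMonodromy-General}, and which is exactly the full set of primitive vanishing cycles on $X_t$. I would show this orbit is infinite by contradiction. Write $\Gamma=\textup{Im}(\rho_{\van})$ and $O=\Gamma\cdot\alpha_0=PV_t$. By Proposition \ref{Prop_PVC-generation} the set $O$ spans $H^2_{\van}(X_t)\otimes\mathbb{Q}$. If $O$ were finite, then restricting the $\Gamma$-action to the $\Gamma$-stable set $O$ would give a homomorphism $\Gamma\to\mathrm{Sym}(O)$ into a finite symmetric group, and this homomorphism is injective: an element fixing every vector of the spanning set $O$ acts trivially on $H^2_{\van}(X_t)\otimes\mathbb{Q}$, hence on the (torsion-free) lattice $H^2_{\van}(X_t,\Z)$. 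Then $\Gamma$ would be finite, contradicting the infinitude of the monodromy just established. Therefore $O=PV_t$ is infinite and $T_v\to U$ has infinitely many sheets.

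The one delicate point is the identification $H^2_{\van}(X_t)=H^2_{\prim}(X_t)$ and the verification that all of $H^{2,0}(X_t)$ lies in the vanishing cohomology; once the positivity $\binom{d-1}{3}\ge 1$ is in hand, both the infinitude of the monodromy and, via the finite-permutation argument, the infinitude of the fibers of $T_v\to U$ follow formally. I expect no serious obstacle beyond this bookkeeping, the main conceptual input being the previous lemma relating finite monodromy to the vanishing cohomology being of pure Hodge type.
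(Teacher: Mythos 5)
Your proof is correct and follows exactly the route the paper intends: the corollary is stated as a direct consequence of the preceding lemma, applied via the observation that a smooth surface of degree $d\ge 4$ in $\mathbb P^3$ has $p_g=\binom{d-1}{3}\ge 1$, so the vanishing cohomology (which you correctly identify with $h^{\perp}$) has nonzero $(2,0)$-part and cannot be concentrated in type $(1,1)$. The paper leaves both this verification and the passage from infinite monodromy group to infinitely many sheets implicit; your spanning-set permutation argument, using Proposition \ref{Prop_PVC-generation}, is precisely the natural way to fill that second gap.
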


In fact, there is a quartic threefold with a hyperplane section having a triple point singularity, and the local monodromy around such a hyperplane section is infinite.

For cubic threefold, the topological Abel-Jacobi map is induced from the Abel-Jacobi map  \eqref{Intro_eqn_TAJ_cubic3fold}. We also characterized the compactification $\bar{\mathcal{T}}_v$ (cf. Section \ref{Section_Tvbar}) and explored the extension problem of the topological Abel-Jacobi map (cf. Section \ref{Section_Cubic3fold_ExtAJ}). We want to ask the same question for higher-degree hypersurfaces.
\begin{question}
How to describe the topological Abel-Jacobi map for hypersurface of $\mathbb P^4$ with degree at least 4? How to characterize Schnell's completion $\bar{\mathcal{T}}_v$?
\end{question}

We hope to study this problem beginning in quartic threefold in the future. For the second question, we may use \cite{ResidueDmodule}, where the minimal extension of the VHS is characterized using Griffiths residue.

\bibliographystyle{plain}
\bibliography{bibfile}

\end{document}